\newtheorem{theorem}{Theorem}[section]
\newtheorem{proposition}[theorem]{Proposition}
\newtheorem{lemma}[theorem]{Lemma}
\newtheorem{corollary}[theorem]{Corollary}
\newtheorem{conjecture}[theorem]{Conjecture}
\newtheorem{problem}[theorem]{Problem}
\newtheorem{question}[theorem]{Question}
\theoremstyle{definition}
\newtheorem{definition}[theorem]{Definition}
\newtheorem{notation}[theorem]{Notation}
\newtheorem{example}[theorem]{Example}
\newtheorem{remark}[theorem]{Remark}
\newcommand{\skipit}[1]{{}}
\newcommand{\prfend}{\hbox to7pt{\hfil}
\par\vskip-\baselineskip\hbox to\hsize
{\hfil\vbox {\hrule width6pt height6pt}}\vskip\baselineskip}
\newcommand{\ZZ}{\mathbb{Z}}
\newcommand {\PP}{\mathbb{P}}
\newcommand{\cM}{\mathcal{M}}
\newcommand{\cB}{\mathcal{B}}
\newcommand{\W}{\mathcal{W}_{d}}
\newcommand{\kk}{\mathbb{K}}
\DeclareMathOperator{\HF}{HF}
\newcommand{\bG}{\overline{G}}
\DeclareMathOperator{\reg}{reg}
\DeclareMathOperator{\PV}{PV}
\DeclareMathOperator{\GL}{GL}
\DeclareMathOperator{\diag}{diag}
\DeclareMathOperator{\GCD}{GCD}
\DeclareMathOperator{\I}{I}
\DeclareMathOperator{\A}{A}
\DeclareMathOperator{\supp}{supp}
\DeclareMathOperator{\codim}{codim}
\DeclareMathOperator{\LT}{LT}
\newcommand{\myarrow}[2]{\hbox to #1pt{\hfil$\to$\hfil}{\hskip-#1pt{\raise
10pt\hbox to#1pt{\hfil$\scriptscriptstyle #2$\hfil}}}}
\title[Monomial projections of Veronese varieties]
{Monomial projections of Veronese varieties: \\
New results and conjectures}
 \author[L. Colarte-G\'omez]{Liena Colarte-G\'omez} 
 \address{Institute of Mathematics of the Polish Academy of Sciences, \'Sniadeckich 8, 00-656 Warwaw, Poland} \email{lcolartegomez@impan.pl}
\author[R. M. Mir\'o-Roig]{Rosa M.\ Mir\'o-Roig} 
  \address{Facultat de
  Matem\`atiques i Inform\`atica, Universitat de Barcelona, Gran Via des les
  Corts Catalanes 585, 08007 Barcelona, Spain} \email{miro@ub.edu, ORCID 0000-0003-1375-6547}
\author[L. Nicklasson]{Lisa Nicklasson}
\address{Dipartimento di Matematica, Università di Genova, Via Dodecaneso 35,
16146 Genova, Italy}
\email{nicklasson@dima.unige.it}
\thanks{\hspace{-15pt} The first author was partially supported by the grant {\em Borsa Ferran Sunyer i Balaguer 2021}. \\ The first and second authors have been  partially supported by the grant PID2019-104844GB-I00.\\
The third author was supported by the grant KAW 2019.0512} 
\begin{document}
\begin{abstract} 
In this paper, we consider the homogeneous coordinate rings $\A(Y_{n,d}) \cong \kk[\Omega_{n,d}]$ of monomial projections $Y_{n,d}$ of Veronese varieties parameterized by subsets $\Omega_{n,d}$ of monomials of degree $d$ in $n+1$ variables where: (1) $\Omega_{n,d}$ contains all monomials supported in at most $s$ variables and, (2) $\Omega_{n,d}$ is a set of monomial invariants of a finite diagonal abelian group $G \subset \GL(n+1,\kk)$ of order $d$. Our goal is to study when $\kk[\Omega_{n,d}]$ is a quadratic algebra and, if so, when $\kk[\Omega_{n,d}]$ is  Koszul or G-quadratic. For the family (1), we prove that $\kk[\Omega_{n,d}]$ is quadratic when $s \ge \lceil \frac{n+2}{2} \rceil$. For the family (2), we completely characterize when $\kk[\Omega_{2,d}]$ is quadratic in terms of the group $G \subset \GL(3,\kk)$, and we prove that $\kk[\Omega_{2,d}]$ is quadratic if and only if it is Koszul. We also provide large families of examples where $\kk[\Omega_{n,d}]$ is G-quadratic. 
\end{abstract}
\maketitle

\section{Introduction}
The \emph{Veronese variety} $X_{n,d} \subset \PP^{\binom{n+d}{n}-1}$ is the projective toric variety parameterized by the set $\cM_{n,d}$ of all monomials of degree $d$ in $R = \kk[x_0,\hdots,x_n]$. {\em Monomial projections} $Y_{n,d}$ of Veronese varieties $X_{n,d}$ are projective toric varieties parameterized by subsets $\Omega_{n,d} \subset \cM_{n,d}$ of monomials. Their homogeneous coordinate rings $\A(Y_{n,d})$ are isomorphic to the monomial $\kk$-algebras $\kk[\Omega_{n,d}]$. Many geometric properties of $Y_{n,d}$ can be explained by means of the algebraic structure of $\kk[\Omega_{n,d}]$, and vice versa. For this reason, monomial projections of Veronese varieties appear at the crossroads between commutative algebra and algebraic geometry.

In order to better understand the properties of monomial projections $Y_{n,d}$ of Veronese varieties, and the monomial $\kk$-algebras $\kk[\Omega_{n,d}]$, it is useful to study their homogeneous ideals $\I(Y_{n,d})$. Moreover, these ideals are of great interest since $\I(Y_{n,d})$ is the homogeneous binomial prime ideal of syzygies of the monomials in $\kk[\Omega_{n,d}]$. Among them, the ideals $\I(Y_{n,d})$ generated by quadrics are in the spotlight, especially since many of them provide examples of \emph{Koszul algebras}. Koszul algebras were introduced by Priddy in \cite{Pr}, and since then, they have played an important role in the research fields of commutative algebra and algebraic geometry. We recall that, when $\I(Y_{n,d})$ is generated by quadrics, the algebra $\kk[\Omega_{n,d}]$ is called {\em quadratic}, and when  $\I(Y_{n,d})$ admits a Gr\"obner basis of quadrics, $\kk[\Omega_{n,d}]$ is said to be {\em G-quadratic}. It is known that Koszul algebras are quadratic and that G-quadratic algebras are Koszul. The converses 
 are in general false, see for instance \cite{ERT}. 
 
 One of the most extensively studied examples is precisely the Veronese variety $X_{n,d}$, whose associated monomial $\kk$-algebra $\kk[\cM_{n,d}]$ is G-quadratic and, hence, Koszul and quadratic (\cite{Backelin-Froberg, ERT, Groebner}). Unlike the Veronese variety $X_{n,d}$, the homogeneous ideal $\I(Y_{n,d})$ of a monomial projection $Y_{n,d}$ is in general not generated by quadrics, and binomials of arbitrarily high degree are often required (see Example \ref{Example: higher degrees binomial}). 
In this work, we focus on two families of monomial subsets, namely (1) $\Omega_{n,d}$ containing at least all monomials supported in at most $s$ variables, and (2) $\Omega_{n,d}$ being the set of monomial invariants of a finite  abelian group $G \subset \GL(n+1,\kk)$ of order $d$. Our main objective is to investigate which monomial subsets $\Omega_{n,d}$ from these two families give rise to quadratic, G-quadratic, and Koszul algebras. Along the way, we pose several questions, problems and conjectures.

Family (1) is motivated by the fact that it contains the {\em pinched Veronese algebras} $\PV(n,d,s)$,  i.\,e.\ the $\kk$-algebra generated by all monomials in $R$ of degree $d$ in at most $s$ variables. The problem of determining when $\PV(n,d,s)$ is a quadratic or Koszul algebra was posed in \cite{Conca-DeNegri-Rossi}. In the main result of this paper (Theorem \ref{thm:pv_quadratic}), we contribute to this problem by proving that the monomial $\kk$-algebras given by (1) are quadratic when $s \geq \lceil \frac{n+2}{2} \rceil$.
A key observation is that the $\kk$-vector spaces $\langle \Omega_{n,d} \rangle$ are 2-normal for these values of $s$, i.\,e.\ $\kk[\Omega_{n,d}]_2 = R_{2d}$. Family (2) has been studied in \cite{ThesisLiena, CMM-R, CM-R, CM-R1}, where the authors proved that the associated ideals $\I(Y_{n,d})$ are generated by binomials of degree at most $3$. In this direction, for $n = 2$ we completely determine when $\kk[\Omega_{2,d}]$ is a quadratic algebra in terms of the finite abelian group $G \subset \GL(3,\kk)$ and,  furthermore, we prove that $\kk[\Omega_{2,d}]$ is a quadratic algebra if and only if it is a Koszul algebra (Theorem \ref{thm:Koszul_surf} and Propositions \ref{prop:Koszul_surf} and \ref{prop:G_koszul}). For those being Koszul, we provide families of examples where $\kk[\Omega_{2,d}]$ is a G-quadratic algebra (Propositions \ref{Proposition: G basis 0,1,k} and \ref{prop:G-quadratic_group}), and this allows us to produce further examples in any dimension (Proposition \ref{prop:GB_extend_groups} and Corollary \ref{coro:GB_extend_groups}). 

\vspace{0.15cm} This work is organized as follows. In Section \ref{Notation and preliminaries}, we gather the basic notions and definitions needed in the body of this paper. In Section \ref{Section: gen mono proj}, we begin our study of homogeneous ideals of monomial projections $Y_{n,d}$ of Veronese varieties. A first important observation is a combinatorial characterization of when a binomial in $\I(Y_{n,d})$ is generated by binomials of lower degree. In Subsection \ref{subsec:monomial_2-normal}, we focus our attention on family (1). We 
 show that if a subset of monomials $\Omega_{n,d}$ spans a $2$-normal vector space, then $\I(Y_{n,d})$ is generated by binomials of degree at most $3$ (Proposition \ref{Proposition: generation at most 3}), and we exhibit an example where $\I(Y_{n,d})$ is minimally generated by quadrics and cubics. Next, we characterize when a subset $\Omega_{n,d}$ in (1) spans a $2$-normal vector space (Proposition \ref{Propsition: 2-normality}), and we use this fact in the proof of our main result: a monomial $\kk$-algebra $\kk[\Omega_{n,d}]$ with $\Omega_{n,d}$ as in (1) is a quadratic algebra when $s \ge \lceil \frac{n+2}{2} \rceil$ (Theorem \ref{thm:pv_quadratic}). 
In Subsection \ref{subsec:monomials_group_inv}, we introduce the family (2) and we gather what is known about their homogeneous ideals.

 Section \ref{Section: Koszulness} is devoted to the problem of determining when quadratic algebras $\kk[\Omega_{n,d}]$ are Koszul and further G-quadratic. We focus mainly on monomial projections of the Veronese surface of family (2). Given a finite abelian group $G \subset \GL(3,\kk)$ of order $d$, let $\Omega_{2,d}$ be the set of all monomial invariants of $G$ of degree $d$. Using the results of \cite{ThesisLiena} and \cite{Conca-Iyengar-Nguyen-Romer}, we completely characterize when $\kk[\Omega_{2,d}]$ is quadratic in terms of the finite abelian group $G$, and we establish that $\kk[\Omega_{2,d}]$ is a quadratic algebra if and only if it is a Koszul algebra (Theorem \ref{thm:Koszul_surf} and Propositions \ref{prop:Koszul_surf} and \ref{prop:G_koszul}). Afterwards, for this subfamily we ask if being quadratic is also equivalent to being G-quadratic. We establish this fact for large subfamilies of algebras $\kk[\Omega_{2,d}]$ with $G \subset \GL(3,\kk)$ a finite cyclic group (Propositions \ref{Proposition: G basis 0,1,k} and \ref{prop:G-quadratic_group}). To tackle these topics more in general, we give a technique (Lemma \ref{lemma:GB_extension}) to produce examples of G-quadratic algebras $\kk[\Omega_{N,d}]$ with $N \geq 3$ by means of G-quadratic algebras $\kk[\Omega_{n,d}]$ with $N > n$. In particular, we apply this technique to G-quadratic algebras $\kk[\Omega_{n,d}]$ in the family (2) (Proposition \ref{prop:GB_extend_groups} and Corollary \ref{coro:GB_extend_groups}). 

Finally, in Section \ref{Section: conjectures}, we gather the main questions and problems posed along Section \ref{Section: Koszulness}, and we present our two main related conjectures regarding the family (2) of monomial projections.

\vskip 4mm \noindent
 {\bf Acknowledgements.} The authors are grateful to the anonymous referees whose comments and suggestions help to improve the exposition of this paper. This work was partially developed while the first and second authors were visiting the {\em Universit\`{a} degli Studi di Genova}, they would like to thank the mathematical community of the {\em Dipartimento di Matematica (DIMA)} for its hospitality. 

\section{Notation and preliminaries}\label{Notation and preliminaries}
Throughout this note, $R = \kk[x_0,\hdots, x_n]$ will be a polynomial ring over a field $\kk$, and $R^{\Lambda}$ the subring of invariants of a finite subgroup $\Lambda \subset \GL(n+1,\kk)$.
Given positive integers $n$ and $d$, we denote by $\cM_{n,d} = \{x_0^{a_0}\cdots x_n^{a_{n}} \in R \mid
a_{0}+\cdots + a_{n} = d\}$ the set of monomials of degree $d$ in $R$, and its cardinality by $ N_{n,d} =|\cM_{n,d}|= \binom{n+d}{n}$.

Recall that an algebra $A$ is a standard graded $\kk$-algebra if $A =\bigoplus _{i \ge 0} A_i$ with $A_0=\kk$,
and $A_1$ is finite dimensional as a $\kk$-space and generates $A$ as a $\kk$-algebra. 
Equivalently, $A$ can be presented as $A = R/I$ where $I \subset R$ is a homogeneous ideal.

\begin{definition} Let $A$ be a standard graded $\kk$-algebra.

\begin{itemize}
\item[(i)] The algebra $A$ is said to be \emph{quadratic} if its defining ideal $I$ is generated by quadrics.
\item[(ii)] The algebra $A$ is said to be \emph{G-quadratic} if $I$ has a Gr\"obner
basis of quadrics (with respect to some coordinate system of $R$ and some term order).
\item[(iii)] The algebra $A$ is said to be \emph{Koszul} if $\reg _A \kk=0$, i.\,e.\ $\kk $ has a linear $A$-resolution.
\end{itemize}
\end{definition}

It is known that Koszul algebras are quadratic and that G-quadratic algebras are Koszul, as mentioned before the converses are not true in general.

We let $X_{n,d} \subset \PP^{N_{n,d}-1}$ denote the \emph{Veronese variety}, defined as the $n$-dimensional projective variety parameterized by $\cM_{n,d}= \{m_1, \ldots, m_{N_{n,d}}\}$,  i.\,e.\ the image of the Veronese embedding
\[\nu_{n,d}: \PP^{n} \longrightarrow \PP^{N_{n,d}-1}, \quad p  \to (m_1(p):\cdots:m_{N_{n,d}}(p)).\]
 We take a new polynomial ring $S= \kk[w_1,\hdots, w_{N_{n,d}}]$ in $N_{n,d}$ variables. Then we have an isomorphism $\kk[\cM_{n,d}] \cong S/ \I(X_{n,d})$ between the \emph{$d$-th Veronese subalgebra} $\kk[\cM_{n,d}] \subset R$ and the homogeneous coordinate ring of $X_{n,d}$. The homogeneous prime ideal $\I(X_{n,d})$ is minimally generated by the quadratic binomials
\begin{equation*}
\{ w_iw_j-w_kw_\ell \ | \ m_im_j=m_km_\ell, \ \ 1 \le i,j,k,\ell \le N_{n,d} \}.
\end{equation*}

 Given a subset $\Omega_{n,d} \subseteq \cM_{n,d}$ of $\mu_{n,d} \leq N_{n,d}$ monomials, we denote by $\varphi_{\Omega_{n,d}}: \PP^{n} \dashrightarrow \PP^{\mu_{n,d}-1}$ the rational map defined by $\Omega_{n,d}$ and we say that $Y_{n,d} := \overline{\varphi_{\Omega_{n,d}}(\PP^{n})} \subset \PP^{\mu_{n,d}-1}$ is the {\em monomial projection of the Veronese variety $X_{n,d}$ parameterized by $\Omega_{n,d}$.}  
 We have the commutative diagram

\begin{center}
\begin{tikzpicture}
\matrix (M) [matrix of math nodes, row sep=3em, column sep=3em, nodes={minimum height = 1cm, minimum width = 1cm, anchor=center}]{
\PP^{n} & X_{n,d} \\
        & Y_{n,d} \\
};
\draw[->] (M-1-1) -- (M-1-2) node[midway,above]{$\nu_{n,d}$};
\draw[->,dashed] (M-1-2) -- (M-2-2) node[midway,right]{$\pi$};
\draw[->,dashed] (M-1-1) -- (M-2-2) node[midway,below]{$\varphi_{\Omega_{n,d}}\hspace{0.5cm}$ };
\end{tikzpicture}
\end{center}
where $\pi$ is the projection of the Veronese variety $X_{n,d} \subset \PP^{N_{n,d}-1}$ from the linear subspace generated by the coordinate points $(0:\cdots:0:1:0:\cdots:0) \in \PP^{N_{n,d}-1}$, with $1$ in position $i$ such that $m_i \notin \Omega_{n,d}$, to the linear subspace $V(w_{m_{i}}, \; m_{i} \notin \Omega_{n,d}) \subset \PP^{N_{n,d}-1}$.

We let $\A(Y_{n,d})$ denote the homogeneous coordinate ring $S/\I(Y_{n,d})$ of $Y_{n,d}$, where $S=\kk[w_1, \ldots, w_{\mu_{n,d}}]$. The homogeneous ideal $\I(Y_{n,d})$ can be defined as the kernel of the homomorphism $\rho: S \longrightarrow R$ defined by $\rho(w_i)=m_{i}$.
Here, $m_1, \ldots, m_{\mu_{n,d}}$ are the monomials of $\Omega_{n,d}$, after renaming the monomials in $\cM_{n,d}$ if necessary. Hence, we have an isomorphism between $\A(Y_{n,d})$ and the monomial $\kk$-algebra $\kk[\Omega_{n,d}]$ generated by the monomials in $\Omega_{n,d}$; we will use them indistinguishably in the sequel. 
The prime ideal $\ker \rho = \I(Y_{n,d})$ is generated by binomials 
\begin{equation}\label{Equation: set of binomials}
\left\{ \prod_{i=1}^{\mu_{n,d}} w_{i}^{\alpha_i} - \prod_{i=1}^{\mu_{n,d}} w_i^{\beta_i} \ \Bigg| \   \prod_{i=1}^{\mu_{n,d}} m_i^{\alpha_i} = \prod_{i=1}^{\mu_{n,d}} m_i^{\beta_i}\right\}.
\end{equation}

For a monomial $m=x_0^{a_0} \cdots x_n^{a_n} \in R$, the \emph{support of $m$} is the set $\supp(m) = \{x_i \ | \ a_i>0\}$. In this paper, we concentrate our attention on two families of monomial projections parameterized by 
\begin{enumerate}
    \item $\Omega_{n,d} \supset  \{ m \in \cM_{n,d} \ | \ |\supp(m)| \le s\}$ for a given number $s \ge \lceil \frac{n+2}{2} \rceil$,
    \item $\Omega_{n,d}=\{ m \in \cM_{n,d} \ | \ m \ \text{is an invariant of a finite abelian group} \ G \subset \GL(n+1,\kk)\}$.
\end{enumerate}
A more detailed description of the second family is provided in Subsection \ref{subsec:monomials_group_inv}. Our first task is to study minimal generating sets of the homogeneous binomial ideals arising from these two families. 

\section{Generators of monomial projection of Veronese varieties}
\label{Section: gen mono proj}
While the homogeneous ideal $\I(X_{n,d})$ of the Veronese variety is generated by quadrics \cite{Groebner}, in general, the ideal of a monomial projection of the Veronese variety may require generators of arbitrarily high degree. Let us illustrate this well-known fact with a few examples. 

\begin{example}\label{Example: higher degrees binomial}\rm
We take $n = 2$, $d \ge 4$, and let $Y_{2,d}$ be the monomial projection of the Veronese surface $X_{2,d}$ parameterized by
\[\Omega_{2,d} = \left\{x_0^d,x_1^d,x_0x_1^{d-2}x_2, x_0^2x_1^{d-4}x_2^2,x_2^d, x_0^{d-1}x_1\right\}.\]
Using Macaulay2 \cite{Macaulay2}, we check that, for $d=4$, the homogeneous ideal $\I(Y_{2,4})$ of $Y_{2,4}$ is minimally generated by two binomials of degree $2$ and one binomial of degree $4$. For $d=5$, the ideal $\I(Y_{2,5})$ is minimally generated by one binomial of degree $2$, two binomials of degree $3$, and one binomial of degree $5$. If we take $d=6$,  then $\I(Y_{2,6})$ is minimally generated by four binomials of degree $2$ and one binomial of degree $6$.
\end{example}

 From now on, we fix positive integers $n$ and $d$, and a subset $\Omega_{n,d} \subset \cM_{n,d}$ of $\mu_{n,d}=|\Omega_{n,d}|$ monomials. The monomial projection parameterized by $\Omega_{n,d}$ is denoted by $Y_{n,d}$.
We denote by $\I(Y_{n,d})_{k}$ the set of binomials of degree $k$ appearing in (\ref{Equation: set of binomials}), and we refer to these binomials as \emph{$k$-binomials}. We say that a $k$-binomial is \emph{trivial} if it is a $(k-1)$-binomial multiplied by a variable.

\begin{definition}\label{Definition: Ik-sequence} \rm Let $k \geq 3$ be an integer and $w^{\alpha} = w^{\alpha_{+}}- w^{\alpha_{-}}\in \I(Y_{n,d})_k$ a non-trivial $k$-binomial, where $w^{\alpha_{+}}$ and $w^{\alpha_{-}}$ denotes monomials with exponent vectors $\alpha_{+}, \alpha_{-} \in \ZZ_{\geq 0}^{\mu_{n,d}}$. An {\em $\I(Y_{n,d})_{k}$-sequence from $w^{\alpha_{+}}$ to $w^{\alpha_{-}}$} is a finite sequence $(w^{(1)}, \hdots, w^{(t)})$ of monomials of $S$ of degree $k$ satisfying the two conditions:
\begin{enumerate}[(i)] 
\item $w^{(1)} = w^{\alpha_{+}}$ and $w^{(t)} = w^{\alpha_{-}}$,
\item for all $1\le j<t$,  $w^{(j)}-w^{(j+1)}$ is a trivial $k$-binomial.
\end{enumerate}
\end{definition}

In the following proposition, we gather a well-known fact about  binomial ideals. It characterizes when a binomial in $\I(Y_{n,d})_{k}$ belongs to the ideal $(\I(Y_{n,d})_{k-1})$ and, hence, it is not needed in a minimal generating set of $\I(Y_{n,d})$. For sake of completeness, we include a simple proof. 

\begin{proposition}\label{Proposition: Criterion generation} Let $w^{\alpha} = w^{\alpha_{+}}-w^{\alpha_{-}} \in \I(Y_{n,d})_k$ with $k \ge 3$. The following are equivalent.
\begin{enumerate}[(i)]
\item $w^{\alpha} \in (\I(Y_{n,d})_{k-1})$.
\item  There is an $\I(Y_{n,d})_{k}$-sequence from $w^{\alpha_{+}}$ to $w^{\alpha_{-}}$.
\end{enumerate}
\end{proposition}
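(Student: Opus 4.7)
The plan is to prove the two implications separately, with (ii)$\Rightarrow$(i) being a short telescoping argument, and (i)$\Rightarrow$(ii) requiring a connectedness argument on an auxiliary graph.

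For (ii)$\Rightarrow$(i), given an $\I(Y_{n,d})_k$-sequence $(w^{(1)},\hdots,w^{(t)})$ from $w^{\alpha_+}$ to $w^{\alpha_-}$, I would simply write
\[
w^{\alpha_+}-w^{\alpha_-}=\sum_{j=1}^{t-1}(w^{(j)}-w^{(j+1)}).
\]
Each summand is a trivial $k$-binomial by Definition \ref{Definition: Ik-sequence}(ii), hence of the form $w_i \cdot b$ with $b\in\I(Y_{n,d})_{k-1}$. Therefore the sum lies in $(\I(Y_{n,d})_{k-1})$, yielding (i).

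For (i)$\Rightarrow$(ii), the degree-$k$ component of the ideal $(\I(Y_{n,d})_{k-1})$ is the $\kk$-span of trivial $k$-binomials. Hence there exist scalars $c_i \in \kk$ and trivial $k$-binomials $u_i - v_i$ with
\[
w^{\alpha_+}-w^{\alpha_-}=\sum_{i=1}^{s} c_i\,(u_i-v_i).
\]
Because each $u_i - v_i$ lies in $\I(Y_{n,d})$, the monomials $u_i$ and $v_i$ have the same image under the map $\rho$ of Section \ref{Section: gen mono proj}; that is, they belong to the same fibre of $\rho$ among degree-$k$ monomials of $S$. I would then consider the graph $\Gamma$ whose vertex set is the fibre $\rho^{-1}(\rho(w^{\alpha_+}))\cap S_k$ and whose edges connect pairs $(u,v)$ with $u-v$ a trivial $k$-binomial.

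Each summand $c_i(u_i-v_i)$ corresponds to a weighted edge in $\Gamma$ lying in some fibre of $\rho$, and on fibres distinct from $\rho(w^{\alpha_+})$ the relevant partial sum must vanish (since no monomial from such a fibre appears on the left-hand side). Restricting attention to the fibre of $w^{\alpha_+}$, the identity above expresses the $0$-chain $w^{\alpha_+}-w^{\alpha_-}$ as the boundary of a $1$-chain in $\Gamma$. By the standard linear-algebraic description of the image of the boundary map of a graph, this forces $w^{\alpha_+}$ and $w^{\alpha_-}$ to lie in the same connected component of $\Gamma$. Any path joining them in $\Gamma$ is, by construction, an $\I(Y_{n,d})_k$-sequence from $w^{\alpha_+}$ to $w^{\alpha_-}$, establishing (ii).

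The main conceptual point is the (i)$\Rightarrow$(ii) direction, where one must recognise that an a priori $\kk$-linear relation among trivial $k$-binomials suffices to produce a concrete combinatorial chain of trivial substitutions linking $w^{\alpha_+}$ to $w^{\alpha_-}$; the argument is essentially the fact that the $0$-th reduced homology of a graph detects its connected components.
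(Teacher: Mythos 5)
Your proposal is correct. The (ii)$\Rightarrow$(i) telescoping argument is exactly what the paper leaves as ``clear,'' and your observation that the degree-$k$ part of $(\I(Y_{n,d})_{k-1})$ is the $\kk$-span of trivial $k$-binomials is the right starting point for the converse. Where you diverge is in how you extract the sequence from such a $\kk$-linear combination: the paper argues constructively and inductively, matching the monomial $w^{\alpha_{+}}$ with a term $w_{i_0}q_{j_0}^{+}$ appearing in the expansion, replacing it by $w^{(2)}=w_{i_0}q_{j_0}^{-}$, and repeating while the number of summands decreases, so the sequence is built step by step from the given coefficients. You instead pass to the fibre graph on $\rho^{-1}(\rho(w^{\alpha_{+}}))\cap S_k$ with edges given by trivial $k$-binomials, note that the relation exhibits $w^{\alpha_{+}}-w^{\alpha_{-}}$ as the boundary of a $1$-chain (after discarding the contributions supported on other fibres, which vanish by linear independence of monomials), and conclude connectivity because the image of the boundary map consists of $0$-chains with zero coefficient sum on each component. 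This is the standard ``fiber graph'' argument from the theory of toric and binomial ideals, and it is arguably more robust: it sidesteps the coefficient bookkeeping in the paper's proof (the claim that some $a_{i_0j_0}$ equals exactly $\pm 1$ is stated somewhat loosely there, since cancellations among terms are possible), and it works verbatim over any field. What the paper's route buys in exchange is an explicit algorithm producing the $\I(Y_{n,d})_k$-sequence directly from a given expression of $w^{\alpha}$ in terms of lower-degree binomials, whereas your argument only certifies existence of a path. Both are complete proofs.
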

\begin{proof} It is clear that (ii) implies (i).
Let us prove the converse. Assume that $w^{\alpha} \in (\I(Y_{n,d})_{k-1})$. We denote by $ \{q_{1},\hdots, q_{N}\}$ with $q_{j} = q_{j}^+ - q_{j}^-$ a system of binomial generators of $\I(Y_{n,d})_{k-1}$. By hypothesis, there exist linear forms $\{\ell_{1},\hdots, \ell_{N}\} \subset S$ such that $w^{\alpha_+} = \ell_{1}q_{1} + \cdots + \ell_{N}q_{N}+w^{\alpha_{-}}$. We write $\ell_j=\sum_{i=0}^{N_{n,d}-1}a_{ij}w_i$, where each $a_{ij} \in \kk$.  Therefore, 
$$w^{\alpha_{+}} = \sum_{j=1}^N\sum_{i=0}^{N_{n,d}-1} (a_{ij}w_{i}q_{j}^+
-  a_{ij}w_{i}q_{j}^-) + w^{\alpha_{-}}.$$ 
Hence, there exists a pair $i_0,j_0$ such that $a_{i_0j_0}=1$ and
 $w^{\alpha_{+}} = w_{i_0}q_{j_0}^+$,
 or $a_{i_0j_0}=-1$ and $w^{\alpha_{+}} = w_{i_0}q_{j_0}^-$. Assume $a^{j_0}_{i_0}=1$  (analogously we deal with the case $a^{j_0}_{i_0}=-1$). Set
  $w^{(2)} = w_{i_0}q_{j_0}^-$. We have
  $$w^{\alpha_{+}} =w^{\alpha_{+}} - w^{(2)}+\sum _{(i,j)\neq (i_0,j_0)}(a_{ij}w_{i}q_{j}^+
-  a_{ij}w_{i}q_{j}^-) + w^{\alpha_{-}}.$$
  Thus,
  $$w^{(2)}=\sum _{(i,j)\ne (i_0,j_0)}(a_{ij}w_{i}q_{j}^+
-  a_{ij}w_{i}q_{j}^-) + w^{\alpha_{-}}.$$
Now we can repeat the same argument for $w^{(2)}$. Considering that the number of summands decreases in each step, we can assure that we end with what we are looking for, namely a $\I(Y_{n,d})_k$-sequence from $w^{\alpha_+}$ to $w^{(t)}=w^{\alpha_-}$.
\end{proof}

\subsection{Monomial projections and 2-normality}\label{subsec:monomial_2-normal}

\begin{definition}\label{Definition: 2-normal} 
A vector subspace $V \subseteq R_d$ is said to be \emph{$2$-normal} if $\kk[V]_2 = R_{2d}$.
\end{definition}
 
$2$-normality was introduced in \cite{Birkenhake}, as an extension of the classical notion of projective normality, to study the minimal free resolution of $\kk[V]$ using Koszul cohomology methods. Here, the vector space $V$ is projectively normal if $V = R_d$. In these terms, any proper subset $\Omega_{n,d} \subsetneq \cM_{n,d}$ of monomials generates a non-projectively normal subspace.
 Notice that, for $2$-normal vector subspaces $V \subset R_d$, it holds that $\kk[V]_t = R_{td}$, for all $t \geq 2$.

In this subsection, we focus on subsets $\Omega_{n,d}$ of monomials spanning $2$-normal subspaces.  We shall see that subsets in family (1) $\Omega_{n,d} \supset  \{ m \in \cM_{n,d} \ | \ |\supp(m)| \le s\}$, for a given number $s \ge \lceil \frac{n+2}{2} \rceil$, do indeed span $2$-normal subspaces. Next, we apply $2$-normality to bound the degree of minimal generators for the binomial ideals $\I(Y_{n,d})$ associated to monomial projections $Y_{n,d}$. Our main result shows that $\I(Y_{n,d})$ is minimally generated by binomials of degree $2$ (Theorem \ref{thm:pv_quadratic}).

\begin{proposition}\label{Propsition: 2-normality} Let $\Omega_{n,d} \subset \cM_{n,d}$ be a subset of monomials. The $\kk$-vector space $\langle \Omega_{n,d} \rangle$ is $2$-normal in the following cases:
\begin{itemize}
\item[(i)] $\Omega_{n,d} \supseteq \{ m \in \cM_{n,d} \ | \ |\supp(m)| \le \lceil(n+2)/2 \rceil \},$
\item[(ii)] $\Omega_{n,d} = \cM_{n,d}\setminus \{m\}$ with $m \in \cM_{n,d} \setminus \{x_0^{d},\hdots, x_{n}^{d}\}$. 
\end{itemize}
That is, the Hilbert function of the monomial projection $Y_{n,d}$ parameterized by $\Omega_{n,d}$ satisfies $\HF(Y_{n,d},k) = \binom{n+kd}{n}$ for all $k \geq 2$. 
\end{proposition}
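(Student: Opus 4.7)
The plan is to prove both parts by showing directly that every monomial $m \in R_{2d}$ factors as $m = m_1 m_2$ with $m_1, m_2 \in \Omega_{n,d}$; this establishes $\kk[\Omega_{n,d}]_2 = R_{2d}$, i.e.\ 2-normality of $\langle \Omega_{n,d} \rangle$. The Hilbert function claim for $k \ge 2$ then follows by a routine induction: for every $m \in \cM_{n,kd}$ one extracts a degree-$d$ divisor lying in $\Omega_{n,d}$ (which exists in both settings by picking $s$ variables of $\supp(m)$ carrying sufficient exponent) and applies the inductive hypothesis to the quotient.

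For (i), write the exponent vector of $m$ as $(a_0,\ldots,a_n)$ and let $k = |\supp(m)|$; since $s \ge \lceil (n+2)/2\rceil$ we have $k \le n+1 \le 2s-1$. If $k \le s$, any factorization of $m$ into two degree-$d$ monomials has both supports contained in $\supp(m)$ and thus of size $\le s$. If $k > s$, order $a_1 \ge \cdots \ge a_k > 0$ and split into two subcases. When $a_1 > d$, I would share $x_{i_1}$ between $m_1$ and $m_2$: since $\sum_{j \ge 2} a_j < d$, any partition of the remaining variables into two groups of size $\le s - 1$ (possible because $k-1 \le 2s-2$) produces a valid splitting once the two $x_{i_1}$-exponents are chosen to balance the degrees. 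When $a_1 \le d$, I would choose an ordering of the exponents so that the greedy prefix sum first crosses $d$ at a position $p$ with $p \le s$ and $k - p + 1 \le s$, yielding supports of size $\le s$ on both sides. The key obstacle is ensuring that such an ordering exists in the $a_1 \le d$ subcase; the averaging bounds $\sum(\text{top } s\ a_i) \ge 2ds/k \ge d$ (from $k \le 2s$) and $\sum(\text{bottom } k-s\ a_i) < d$ (from $k < 2s$), together with sliding $x_{i_1}$ through the ordering, should pin the greedy split point in the window $[k-s+1, s]$ by an intermediate-value argument on the partial sums.

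For (ii), write $\Omega_{n,d} = \cM_{n,d} \setminus \{m_0\}$ with $m_0$ non-pure, and count ordered degree-$d$ decompositions of $m$, which are in bijection with the degree-$d$ divisors of $m$. If $m_0 \nmid m$, every factorization automatically avoids $m_0$. If $m_0 \mid m$, at most two ordered decompositions involve $m_0$, namely $(m_0, m/m_0)$ and $(m/m_0, m_0)$ (coinciding when $m = m_0^2$), so it suffices to produce a third. The only $m$ with a unique degree-$d$ decomposition is the pure power $x_i^{2d}$, whose decomposition $(x_i^d, x_i^d)$ already avoids $m_0$ since $m_0$ is non-pure. The main obstacle is the delicate situation in which $m$ has exactly two degree-$d$ divisors and these are precisely $m_0$ and $m/m_0$: here the exponent pattern of $m$ is very constrained, and I would need a direct combinatorial argument --- leveraging the non-pureness of $m_0$ together with the shape of $\supp(m)$ --- to produce an alternative factorization.
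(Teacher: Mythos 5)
Both halves of your plan stall exactly at their announced ``key obstacle,'' so the proposal is not yet a proof; moreover the two sticking points have very different statuses. In part (i), the $k\le s$ case and the $a_1>d$ subcase are fine, but the $a_1\le d$ subcase is genuinely open as written, and the mechanism you sketch can fail. Concretely, take $n=4$ (so $s=3$), $d=10$ and exponent vector $(6,6,6,1,1)$: in decreasing order the greedy prefix sum first crosses $d$ at position $2$, outside the required window $[k-s+1,s]=[3,3]$, and sliding the largest-exponent variable through that ordering never changes the crossing index (it stays $2$ for every insertion point); one must instead move a small exponent to the front, e.g.\ the order $1,6,6,6,1$ crosses at $3$. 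Your two averaging bounds are the right endpoint facts, and one can complete the argument from them (pass from the decreasing to the increasing order by adjacent transpositions and check that the first-crossing index changes by at most one at each step), but none of that is in the proposal. The paper avoids the issue altogether with a one-line construction you may want to compare: sort the exponents increasingly, distribute the odd- and even-indexed variables among $x_0,\dots,x_{n-1}$ into the two factors (each factor then has at most $\lceil n/2\rceil$ variables and degree at most $d$, by pairing each exponent with the next larger one), and pad both factors with powers of the largest variable $x_n$.

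In part (ii), your divisor-counting reduction is correct and efficient: if a degree-$2d$ monomial $r$ has at least three degree-$d$ divisors, some divisor lies outside $\{m_0,\,r/m_0\}$ and yields a factorization avoiding $m_0$. But the ``delicate situation'' you leave open cannot be closed by any combinatorial trick, because there the conclusion fails. The only degree-$2d$ monomials with exactly two degree-$d$ divisors are $x_i^{2d-1}x_j$, whose divisors $x_i^d$ and $x_i^{d-1}x_j$ pair with each other; so if $m_0=x_i^{d-1}x_j$ (allowed by the hypothesis, e.g.\ $n=2$, $d=3$, $m_0=x_0^2x_1$), the monomial $x_i^{2d-1}x_j$ admits no factorization into two elements of $\Omega_{n,d}$ and $\langle\Omega_{n,d}\rangle$ is not $2$-normal --- in particular statement (ii) fails for every admissible $m$ when $d=2$. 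Your analysis, pushed to completion, therefore produces a counterexample rather than a proof, and it exposes a corresponding lacuna in the paper's own argument, where the case $\supp(r)\subseteq\supp(m)$ is dismissed with the remark that it ``follows from (i)'' (it does not, since the excluded monomial may itself have small support); the paper's main case, when some variable of $r$ lies outside $\supp(m)$, is handled by a different device than yours, namely replacing $m$ by $m\,x_j/x_0$ and inducting on $k$. Your reduction of the Hilbert-function claim for $k\ge 2$ to the $k=2$ case (extract a degree-$d$ divisor lying in $\Omega_{n,d}$ and induct) is fine in both settings.
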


\begin{proof} (i) Let $m=x_0^{a_0}x_1^{a_1} \cdots x_n^{a_n}$ be a monomial of degree $2d$. We shall prove that $m$ is the product of two monomials of degree $d$ and support of size at most $\lceil(n+2)/2 \rceil$. After possibly reindexing our variables, we can assume that $a_0 \le a_1 \le \dots \le a_n$. Let
\[
m_1= \prod_{1 \le 2k+1 <n} x_{2k+1}^{a_{2k+1}} \quad \mbox{and} \quad m_2=\prod_{0 \le 2k <n}x_{2k}^{a_{2k}}
\]
so that $m=m_1m_2x_n^{a_n}$. Notice that the supports of $m_1$ and $m_2$ are of size at most $\lceil n/2 \rceil$. The degree of $m_1$ is at most $d$ and
\[
2 \deg(m_1)=\sum_{1 \le 2k+1 <n} 2a_{2k+1} \le \sum_{1 \le 2k+1 <n}(a_{2k+1}+a_{2k+2}) \le \sum_{i=1}^n a_i \le 2d.
\]
Similarly,
\[
2 \deg(m_2)=\sum_{0 \le 2k <n} 2a_{2k}  \le \sum_{0 \le 2k <n}(a_{2k}+a_{2k+1}) \le \sum_{i=0}^n a_i =2d, 
\] 
so $\deg(m_2) \le d$. Now we can write 
\begin{equation}\label{eq:2normal_decomp}
m=(m_1x_n^{d-\deg(m_1)})(m_2x_n^{d-\deg(m_2)}).
\end{equation}
\noindent (ii) Let $m \in \cM_{n,d}\setminus \{x_0^{d},\hdots, x_n^{d}\}$ and let $r$ be any monomial of degree $kd$ with $k \geq 2$. We want to see that $r$ can be factorized as product of $k$ monomials in $\Omega_{n,d} = \cM_{n,d} \setminus \{m\}$. If $m$ does not divide $r$, then the result is true. Moreover, if $\supp(r) \subseteq \supp(m)$, then the result follows from (i). 
Therefore, we can assume that $m = x_0^{a_0}\cdots x_{t}^{a_t}$ with $0 < t < n$ and $a_0\cdots a_t \neq 0$, and $r = x_{0}^{b_0} \cdots x_{n}^{b_n}$ with $a_0 \leq b_0, \hdots, a_t \leq b_n$ and there is $j$ with $t< j \leq n$ such that $b_j > 0$. We set $s_1 = m\frac{x_j}{x_0} \in \Omega_{n,d}$. Since $s_1$ divides $r$, we get a monomial $s_2 = r/s_1$ of degree $(k-1)d$. For $k = 2$, the product $r = s_1s_2$ is  as wanted unless $s_2 = m$. In this case, $r = m^2\frac{x_j}{x_0}$, and it is enough to take 
$s_1 = m\frac{x_j}{x_1}$ and $s_2 = r/s_1 \neq m$. Now the result follows by induction on $k$.
\end{proof}

The bound $\lceil (n+2)/2 \rceil$ for the support in Proposition \ref{Propsition: 2-normality} (i) can not be replaced by a smaller number while maintaining 2-normality. Take for example $n=3$ and $d = 5$. Then $\lceil (n+2)/2 \rceil = 3$, and the monomial $x_0^2x_1^2x_2^2x_3^4$ of degree $10$ is not the product of two monomials of degree $5$ in two variables.    
Similarly, if we violate (ii) by taking $\Omega_{n,d} = \cM_{n,d} \setminus \{x_0^d\}$, then the $\kk$-vector space spanned by $\Omega_{n,d}$ is not $2$-normal. 

\begin{remark}\label{Remark: non CM} The question of the arithmetically Cohen-Macaulayness of monomial projections $Y_{n,d}$ is a longstanding problem posed by Gr\"obner in \cite{Groebner}, see also \cite{ThesisLiena, CMM-R, H, HT, Ho, Groebner, S, T, T1}. For $\Omega_{n,d} \neq \cM_{n,d}$ as in Proposition \ref{Propsition: 2-normality}, a straightforward application of \cite[Theorem 2.6]{Goto-Suzuki-Watanabe} and the $2$-normality property shows that $\kk[\Omega_{n,d}]$ is always a non Cohen-Macaulay ring. 
\end{remark}

The proof of Theorem 3.10 relies on the fact that the defining ideal of $\kk[V]$ is generated in degree two and three, when $V \subset R_d$ is a $2$-normal subspace, proved in \cite[Proposition 1.2]{Alzati-Russo}. We provide an alternative proof of this claim, in the toric case.

\begin{proposition}\label{Proposition: generation at most 3}
Let $\Omega_{n,d} \subset \cM_{n,d}$ be a subset of monomials spanning a 2-normal vector space in $R_d$, and let $Y_{n,d}$ be the monomial projection parameterized by $\Omega_{n,d}$. Then, the ideal $\I(Y_{n,d})$ is generated by binomials of degrees at most 3.
\end{proposition}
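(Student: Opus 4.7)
The plan is to apply Proposition \ref{Proposition: Criterion generation} and show, for every $k \geq 4$, that each $k$-binomial $w^{\alpha_+} - w^{\alpha_-} \in \I(Y_{n,d})_k$ admits an $\I(Y_{n,d})_k$-sequence. Iterating this yields $\I(Y_{n,d})_k \subseteq (\I(Y_{n,d})_{k-1}) \subseteq \cdots \subseteq (\I(Y_{n,d})_3)$ for every $k \geq 4$, which is the desired statement. The key observation is that, when $k \geq 4$, both $2$-swaps and $3$-swaps of the factorization $(m_{i_1}, \ldots, m_{i_k})$ give rise to trivial $k$-binomials: a $t$-binomial multiplied by $k-t$ variables can be rewritten as a variable times a $(k-1)$-binomial whenever $k - t \geq 1$, so both $t = 2$ and $t = 3$ are admissible.

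First I would dispose of the easy case: if the multisets $\{m_{i_\ell}\}$ and $\{m_{j_\ell}\}$ share a common element $m$, then $w^{\alpha_+} - w^{\alpha_-} = w_m (w^{\beta_+} - w^{\beta_-})$ with $w^{\beta_+} - w^{\beta_-} \in \I(Y_{n,d})_{k-1}$, giving a one-step $\I(Y_{n,d})_k$-sequence. Otherwise, I would aim to produce a common element by a single $\Omega_{n,d}$-move. Fixing a target $m = m_{j_1}$, suppose there exist indices $\ell_1, \ell_2, \ell_3$ such that $m \mid m_{i_{\ell_1}} m_{i_{\ell_2}} m_{i_{\ell_3}}$. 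Then $Q := m_{i_{\ell_1}} m_{i_{\ell_2}} m_{i_{\ell_3}}/m$ lies in $R_{2d}$, and by $2$-normality $Q = q_1 q_2$ with $q_1, q_2 \in \Omega_{n,d}$; hence the $3$-swap $(m_{i_{\ell_1}}, m_{i_{\ell_2}}, m_{i_{\ell_3}}) \mapsto (m, q_1, q_2)$ is a valid trivial $k$-binomial, after which $w_m$ is a common factor and the previous case applies.

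The main obstacle is the remaining situation where no triple of current factors has product divisible by $m$, i.e., the support of $m$ is too spread out relative to the factorization. The plan here is to first perform preliminary $2$-swaps within $\Omega_{n,d}$ to consolidate the factorization: given any pair $(m_{i_{\ell_1}}, m_{i_{\ell_2}})$, $2$-normality applied to the degree-$2d$ monomial $m_{i_{\ell_1}} m_{i_{\ell_2}}$ produces alternative factorizations $(\tilde m_1, \tilde m_2) \in \Omega_{n,d}^2$, among which I would choose one whose individual supports cover more of $\supp(m)$. After finitely many consolidation steps, some triple of updated factors will admit $m$ as a divisor of their product, and the previous step applies. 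The technical heart of the argument is proving termination; I expect the correct monovariant to be the minimum cardinality $t^\ast$ of a subset $T \subseteq \{1,\ldots,k\}$ with $m \mid \prod_{\ell \in T} m_{i_\ell}$, and the bulk of the work will lie in a combinatorial argument showing that a well-chosen $2$-swap strictly decreases $t^\ast$ as long as it exceeds three.
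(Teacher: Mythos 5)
Your overall skeleton is sound, and your ``step 1'' is exactly the right use of $2$-normality (and matches the spirit of the paper's argument): once the target $m=m_{j_1}$ divides the product of three of the current factors, the quotient lies in $R_{2d}$, $2$-normality splits it inside $\Omega_{n,d}$, and a $3$-swap (which is a trivial move since $k\ge 4$) creates a common factor. The genuine gap is in the consolidation phase. A $2$-swap must replace $m_{i_a}m_{i_b}$ by a factorization $\tilde m_1\tilde m_2$ with \emph{both} factors in $\Omega_{n,d}$, and in addition you need $\tilde m_1$ to absorb a prescribed part of $m$. But $2$-normality only says that \emph{some} factorization of a degree-$2d$ monomial into two elements of $\Omega_{n,d}$ exists; it gives no control over which degree-$d$ divisors lie in $\Omega_{n,d}$, and for a given pair the only $\Omega_{n,d}$-factorization of $m_{i_a}m_{i_b}$ may be the one you started with. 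So the claim that ``a well-chosen $2$-swap strictly decreases $t^\ast$'' is unsupported, and I see no reason it should hold for an arbitrary $2$-normal $\Omega_{n,d}$; this is precisely the hard point, and your proposal leaves it as a hope rather than a proof.

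Two ways to close the gap. (a) Do the consolidation with $3$-swaps instead: take three factors from a minimal set $T$ witnessing $t^\ast$, let $x^r$ (of degree $\le d$) be the part of $m$ that must come from their product, choose by $2$-normality some $u_3\in\Omega_{n,d}$ dividing the quotient of their product by $x^r$, and then apply $2$-normality to the degree-$2d$ complement (which is divisible by $x^r$) to write it as $u_1u_2$ with $u_1,u_2\in\Omega_{n,d}$; this legal $3$-swap forces $x^r\mid u_1u_2$ and strictly decreases $t^\ast$. (b) Note that the paper avoids any induction on a monovariant altogether: triviality of a move only requires \emph{one} factor to stay fixed, so one may refactor all $k-1$ remaining factors at once. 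The paper takes the deficit monomial $\widetilde m$ of $m_{j_1}$ relative to $m_{i_1}$ (of degree $<d$, dividing $m_{i_2}\cdots m_{i_k}$), refactors $m_{i_2}\cdots m_{i_k}$ in a single trivial move so that $\widetilde m$ divides the product of $k-2$ of the new factors, and then refactors those $k-2$ factors together with $m_{i_1}$ (again one trivial move, via $2$-normality applied to a monomial of degree $(k-2)d\ge 2d$) so that $m_{j_1}$ itself appears as a factor. This yields a four-term $\I(Y_{n,d})_k$-sequence directly, with no termination argument needed.
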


\begin{proof} Fix $k \geq 4$ and let $w^{\alpha} = w^{\alpha_{+}}-w^{\alpha_{-}} = w_{i_{1}}\cdots w_{i_{k}} - w_{j_{1}}\cdots w_{j_{k}} \in \I(Y_{n,d})_k$ be a non-trivial $k$-binomial.  For each $w_{i_{\ell}}$ (respectively $w_{j_{\ell}}$), let $m_{i_{\ell}} = x_{0}^{a_{\ell 0}}\cdots x_{n}^{a_{\ell n}} \in \Omega_{n,d}$ be its associated monomial (respectively, $m_{j_{l}} = x_{0}^{b_{\ell 0}}\cdots x_{n}^{b_{\ell n}} \in \Omega_{n,d}$), for $\ell = 1,\hdots,k$. 

We consider the monomials $m_{i_1}$ and $m_{j_1}$, and for each $0 \leq s \leq n$, we define:
\[c_{s} = \left\{\begin{array}{lll}
0 & \quad &\text{if} \;\; a_{1s} \geq b_{1s}\\
b_{1s}-a_{1s} & \quad & \text{otherwise.}
\end{array}\right.\]
This gives rise a non zero monomial $\widetilde{m} = x_{0}^{c_{0}}\cdots x_{n}^{c_{n}} \in R$ of degree strictly less than $d$, which divides $m_{i_{2}}\cdots m_{i_{k}}$. Thus, we consider $m' = (m_{i_{2}}\cdots m_{i_{k}})/\tilde{m}$, which is a monomial of degree at least $(k-2)d \geq 2d$. Then, arguing as in the proof of Proposition \ref{Propsition: 2-normality}, we can find a monomial $r \in \Omega_{n,d}$ of degree $d$ dividing $m'$. Thus, $m_{i_{2}}\cdots m_{i_{k}} = r\cdot s$, where $s$ is a monomial of degree $(k-2)d \geq 2d$. By hypothesis, $\langle \Omega_{n,d} \rangle$ is $2$-normal. Since $s$ is a monomial of degree at least $2d$, we can factorize
\[m_{i_{2}}\cdots m_{i_{k}} = (m_{i_{2}}^{(1)} \cdots m_{i_{k-1}}^{(1)})m_{i_{k}}^{(1)},\]
where all $m_{i_{l}}^{(1)} \in \Omega_{n,d}$, for $2 \leq l \leq k$. In particular,
$m_{i_{k}}^{(1)} = r.$ Now define $w^{(2)} \in S$ to be the monomial $\rho^{-1}(m_{i_{(1)}})\rho^{-1}(m_{i_{2}}^{(1)}) \cdots \rho^{-1}(m_{i_{k}}^{(1)})$. By construction, $w^{\alpha_{+}}-w^{(2)} \in \I(Y_{n,d})_k$ is a  trivial $k$-binomial and $\widetilde{m}$ divides $m_{i_{2}}^{(1)}\cdots m_{i_{k-1}}^{(1)}$. Thus, $m_{j_{1}}$ divides $m_{i_{1}}m_{i_{2}}^{(1)}\cdots m_{i_{k-1}}^{(1)}$. Applying the same argument as before, we factorize
\[m_{i_{1}}m_{i_{2}}^{(1)}\cdots m_{i_{k-1}}^{(1)} = m_{i_{1}}^{(2)}m_{i_{2}}^{(2)}\cdots m_{i_{k-1}}^{(2)},\]
where $m_{i_{1}}^{(2)} = m_{j_{1}}$ and all $m_{i_{\ell}}^{(2)} \in \Omega_{n,d}$, for $2 \leq \ell \leq k-1$. We set 
$$w^{(3)} = \rho^{-1}(m_{i_{1}}^{(2)})\cdots \rho^{-1}(m_{i_{k-1}}^{(2)}) \rho^{-1}(m_{i_{k}}^{(1)}).$$
Since 
$$m_{i_{1}}m_{i_{2}}^{(1)}\cdots m_{i_{k-1}}^{(1)}m_{i_{k}}^{(1)} = m_{i_{1}}^{(2)}m_{i_{2}}^{(2)}\cdots m_{i_{k-1}}^{(2)}m_{i_{k}}^{(1)},$$
$w^{(2)}-w^{(3)} \in \I(Y_{n,d})_k$ is a trivial $k$-binomial. Furthermore, since
$m_{i_{1}}^{(2)} = m_{j_{1}}$, also $w^{(3)}-w^{\alpha_{-}} \in \I(Y_{n,d})_k$ is a trivial $k$-binomial. Therefore, $(w^{\alpha_+},w^{(2)},w^{(3)},w^{\alpha_-})$
is an $\I(Y_{n,d})_{k}$-sequence, and by Proposition \ref{Proposition: Criterion generation}, we have $w^{\alpha} \in (\I(Y_{n,d})_{k-1})$. Repeating the same argument, we obtain
\[\cdots \subset (\I(Y_{n,d})_{k}) \subset (\I(Y_{n,d})_{k-1}) \subset \cdots \subset (\I(Y_{n,d})_{3}). \qedhere\]
\end{proof}

\begin{corollary}\label{coro} Let $\Omega_{n,d} \subset \cM_{n,d}$ be a subset of monomials parametrizing a  monomial projection
 $Y_{n,d}$. Assume
\begin{itemize}
\item[(i)]  $\Omega_{n,d} \supseteq \{ m \in \cM_{n,d} \ | \ |\supp(m)| \le \lceil(n+2)/2 \rceil \}$, or
\item[(ii)] $\Omega_{n,d} = \cM_{n,d}\setminus \{m\}$ with $m \in \cM_{n,d} \setminus \{x_0^{d},\hdots, x_{n}^{d}\}$. 
\end{itemize}
Then, the ideal $\I(Y_{n,d})$ is generated by binomials of degree at most 3.
\end{corollary}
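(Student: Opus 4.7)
The plan is entirely straightforward: this corollary is an immediate concatenation of Proposition \ref{Propsition: 2-normality} and Proposition \ref{Proposition: generation at most 3}. In both cases (i) and (ii), the hypotheses on $\Omega_{n,d}$ are literally the two hypotheses of Proposition \ref{Propsition: 2-normality}, so I would first invoke that proposition to conclude that $\langle \Omega_{n,d} \rangle$ is a 2-normal $\kk$-subspace of $R_d$. Having established 2-normality, I would then apply Proposition \ref{Proposition: generation at most 3} directly to obtain that $\I(Y_{n,d})$ is generated by binomials of degree at most $3$.

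There is no genuine obstacle to overcome here beyond noticing the logical chain. All of the combinatorial work of writing an arbitrary degree-$2d$ monomial as a product of two elements of $\Omega_{n,d}$ (the decomposition in \eqref{eq:2normal_decomp} for (i), and the swap trick $s_1 = m\tfrac{x_j}{x_0}$ for (ii)) has already been carried out in Proposition \ref{Propsition: 2-normality}. Likewise, the reduction procedure that turns a non-trivial $k$-binomial with $k \geq 4$ into an $\I(Y_{n,d})_k$-sequence connecting it to binomials of degree $k-1$—based on repeatedly factoring suitable monomials of degree $d$ out of degree-$\geq 2d$ factors using 2-normality—is exactly the content of Proposition \ref{Proposition: generation at most 3}.

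Thus the entire proof reduces to a one-line citation: under either (i) or (ii), Proposition \ref{Propsition: 2-normality} gives 2-normality of $\langle \Omega_{n,d} \rangle$, and then Proposition \ref{Proposition: generation at most 3} yields the desired generation in degree at most $3$. No additional estimates, Gr\"obner basis considerations, or case analyses are needed.
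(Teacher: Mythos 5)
Your proof is correct and is exactly the paper's argument: the paper's own proof of this corollary is the one-line citation "it immediately follows from Propositions \ref{Propsition: 2-normality} and \ref{Proposition: generation at most 3}," i.e., 2-normality of $\langle \Omega_{n,d} \rangle$ in cases (i) and (ii), followed by the degree bound from the 2-normality proposition. Nothing further is needed.
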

\begin{proof}
It immediately follows from Propositions \ref{Propsition: 2-normality} and \ref{Proposition: generation at most 3}.
\end{proof}

Our next question is when degree 3 generators of $\I(Y_{n,d})$ are necessary. Indeed, we see in the example below that quadrics are not always enough to generate $\I(Y_{n,d})$ for all monomial projections covered in Corollary \ref{coro}.

\begin{example} Take $n = 2$, $d = 4$ and $\Omega_{2,4} = \cM_{2,4} \setminus \{x_0^2x_1^2\}$. We have checked computationally, using the software Macaulay2 \cite{Macaulay2}, that the homogeneous ideal $\I(Y_{2,4})$ of the monomial projection $Y_{2,4}$ parameterized by $\Omega_{2,4}$ is minimally generated by $60$ quadrics and $3$ cubics. 
\end{example}

We are now ready to state the main result of this section, which also 
  provides a partial answer to Question 16 in \cite{Conca-DeNegri-Rossi} concerning the {\em pinched Veronese algebra} $\PV (n,d,s)$, i.\,e.\ the algebra generated by all monomials in $n+1$ variables of degree
$d$ that are supported in at most $s$ variables.

\begin{theorem}\label{thm:pv_quadratic}
  Fix  integers $d,n \geq 2$, and let $Y_{n,d}$ be a monomial projection parameterized by $\Omega_{n,d} \supset \{ m \in \cM_{n,d} \ | \ |\supp(m)| \le  \lceil \frac{n+2}{2} \rceil \}$. Then, the algebra  $\A(Y_{n,d})$ is quadratic. In particular, when $s \ge \lceil \frac{n+2}{2} \rceil$, the pinched Veronese algebra $\PV (n,d,s)$ is quadratic.
\end{theorem}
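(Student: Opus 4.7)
The plan is as follows. By Corollary \ref{coro}(i), the ideal $\I(Y_{n,d})$ is already known to be generated by binomials of degree at most $3$, so it remains to prove that every cubic generator is superfluous, i.e., every cubic binomial of $\I(Y_{n,d})$ lies in the ideal generated by the quadrics. By Proposition \ref{Proposition: Criterion generation}, this is equivalent to showing that for every non-trivial cubic $w^{\alpha_+}-w^{\alpha_-}\in\I(Y_{n,d})_3$ there is an $\I(Y_{n,d})_3$-sequence from $w^{\alpha_+}$ to $w^{\alpha_-}$.

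Fix such a binomial, say $w^{\alpha_+}=w_{i_1}w_{i_2}w_{i_3}$ and $w^{\alpha_-}=w_{j_1}w_{j_2}w_{j_3}$, corresponding to the equality $M:=m_{i_1}m_{i_2}m_{i_3}=m_{j_1}m_{j_2}m_{j_3}$ in $R_{3d}$. I would build the sequence in two stages. First, via trivial cubic moves (single quadratic swaps applied to a pair of factors), transform $w_{i_1}w_{i_2}w_{i_3}$ into a monomial of the form $w_{j_1}w_kw_\ell$ for some indices $k,\ell$. Then, since $m_km_\ell=M/m_{j_1}=m_{j_2}m_{j_3}$, the binomial $w_kw_\ell-w_{j_2}w_{j_3}$ lies in $\I(Y_{n,d})_2$, and multiplying it by $w_{j_1}$ gives a single trivial cubic move from $w_{j_1}w_kw_\ell$ to $w^{\alpha_-}$, completing the sequence.

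The real content lies in the first stage. If there exists a pair $\{s,r\}\subset\{1,2,3\}$ with $m_{j_1}\mid m_{i_s}m_{i_r}$ and the quotient $q:=m_{i_s}m_{i_r}/m_{j_1}$ belonging to $\Omega_{n,d}$, then a single trivial cubic move on the pair $(w_{i_s},w_{i_r})$ finishes the job. In the generic case neither condition comes for free, and preparatory swaps will be needed: starting from a pair $(m_{i_s},m_{i_r})$, one uses the $2$-normality of $\langle\Omega_{n,d}\rangle$ from Proposition \ref{Propsition: 2-normality}(i) to rewrite $m_{i_s}m_{i_r}$ as $m'\cdot m''$ with $m',m''\in\Omega_{n,d}$, chosen so that after the swap some new pair admits a direct insertion of $m_{j_1}$. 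The existence of such an admissible pair rests on the flexibility of the parity-indexed factorization~\eqref{eq:2normal_decomp}: by re-sorting the variables before applying it, one concentrates the deficit of $m_{j_1}$ in a single side of the decomposition, while keeping both resulting factors of support at most $\lceil(n+2)/2\rceil\le s$ and hence in $\Omega_{n,d}$.

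The main obstacle will be the combinatorial verification that such preparatory swaps always succeed, regardless of how the support of $m_{j_1}=\prod_\ell x_\ell^{c_\ell}$ distributes across the three factors $m_{i_t}=\prod_\ell x_\ell^{a_{t,\ell}}$. I expect this to require a short case analysis on the relationship between the exponents $c_\ell$ and $a_{t,\ell}$, combined with a careful choice of variable ordering in the use of Proposition \ref{Propsition: 2-normality}(i). Once this combinatorial claim is established, the theorem follows for every $\Omega_{n,d}$ in the statement; the pinched Veronese conclusion is then immediate, since $\PV(n,d,s)=\kk[\Omega_{n,d}]$ for $\Omega_{n,d}=\{m\in\cM_{n,d}:|\supp m|\le s\}$, which lies in the hypothesis of the theorem as soon as $s\ge\lceil(n+2)/2\rceil$.
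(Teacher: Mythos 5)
Your reduction coincides with the paper's: by Corollary \ref{coro} and Proposition \ref{Proposition: Criterion generation} it suffices to connect the two factorizations of a cubic relation by trivial cubic moves, and your endgame (once one side contains the factor $m_{j_1}$, a single quadratic relation finishes) is correct. But the entire content of Theorem \ref{thm:pv_quadratic} lies in the step you defer: showing that the quadratic swaps can always be arranged so that \emph{every} intermediate factor remains in $\Omega_{n,d}$, i.e.\ keeps support of size at most $t=\lceil (n+2)/2\rceil$. Your proposal only gestures at this (``I expect this to require a short case analysis''), and the gesture is off in two ways. First, the $2$-normality of Proposition \ref{Propsition: 2-normality}(i) only guarantees that a degree-$2d$ monomial admits \emph{some} factorization into two members of $\Omega_{n,d}$; it gives no control over placing a prescribed divisor $m_{j_1}$ inside one factor, and indeed $m_{j_1}$ need not divide any product of two of the $m_{i_t}$, nor, when it does, need the cofactor have admissible support. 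Second, aiming to reproduce the specific factor $m_{j_1}$ on the other side is a more rigid target than what the paper actually proves: the paper moves \emph{both} sides by quadratic swaps toward a canonical common factor built from the exponent vector of the product (e.g.\ $x_0^{d_0}x_1^{d-d_0}$ when $n=2$, and in general a monomial collecting $x_0^{d_0}\cdots x_\beta^{d_\beta}$ on the shared support), which is what makes the argument terminate; whether your rigid target can always be reached by admissible swaps is exactly the nontrivial point and is not established.

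The missing verification is not short: in the paper it is a multi-page case analysis (separate treatments of $n=2$, $n=3$ and $n>3$, the swap operation of Remark \ref{lem:operation}, the auxiliary move (\ref{eq:op_additional}), the maximal-overlap choice of the pair $m_1,m_4$, and the terminal Cases A and B after (\ref{eq:part1_done})), precisely because a swap that inserts a needed variable into one factor can push another factor's support above $t$, and these obstructions have to be circumvented case by case. Nothing in your outline addresses this obstruction, so as it stands the proposal records the correct reduction and final step---both shared with the paper---but not a proof of the theorem.
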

\begin{proof} Let $t= \lceil (n+2)/2 \rceil$. By Proposition \ref{Proposition: Criterion generation} and Corollary \ref{coro},  the ideal $\I(Y_{2,d})$ is generated in degrees two and three, so it remain to show that generators of degree three are not needed. To this end, we take a nontrivial relation
\[
m_1m_2m_3=m_4m_5m_6=x_0^{d_0} x_1^{d_1} \cdots x_n^{d_n}, \ \ m_1, \ldots, m_6 \in \Omega_{n,d}. 
\]
As we can apply the factorization (\ref{eq:2normal_decomp}) from the proof of Proposition \ref{Propsition: 2-normality} pairwise to the monomials,  it is enough to consider the case $\Omega_{n,d}=\{m \in \cM_{n,d} \ | \ |\supp(m)| \le t \}.$
The goal is to prove the existence of two sequences:
\begin{align*}
&m_1m_2m_3 = m_1^{(1)}m_2^{(1)}m_3^{(1)}= \dots = m_1^{(r)}m_2^{(r)}m_3^{(r)} \\
 & m_4m_5m_6 = m_4^{(1)}m_5^{(1)}m_6^{(1)}= \dots = m_4^{(s)}m_5^{(s)}m_6^{(s)},
\end{align*}
where each step is a degree two relation, so one monomial is unchanged, and $m_i^{(r)}=m_j^{(s)}$ for a pair $i \in \{1,2,3\}$, $j \in \{4,5,6\}$.  
We distinguish three cases.

\vskip 1mm
\noindent \underline{Case 1:} $n=2$. Here $t=2$ and, after possibly reordering our $x$-variables, we can assume that $d_0 = \min\{d_0,d_1,d_2\}$ and so $d_0 \le d$.
We claim there are $m_2^{(1)}$ and $m_3^{(1)}$ in $\Omega _{2,d}$ such that $m_2m_3=m_2^{(1)}m_3^{(1)}$ and $x_0 \notin \supp(m_3^{(1)})$. Indeed, say $m_2m_3=x_0^{a_0}x_1^{a_1}x_2^{a_2}$. As $d_0 \le d$, we also have $a_0 \le d$. Then $a_0+a_1 \ge d$ or $a_0+a_2 \ge d$, as both strictly less than $d$ would disagree with $a_0+a_1+a_2=2d$. Say $a_0+a_1 \ge d$. Then take $m_2^{(1)}=x_0^{a_0}x_1^{d-a_0}$ and $m_3^{(1)}=x_1^{d-a_2}x_2^{a_2}$. 
Now we set $m_1^{(1)}=m_1$ and we apply the same procedure to $m_1^{(1)}m_2^{(1)}$. We obtain $m_1^{(2)}$ and $m_2^{(2)}$ such that $\supp(m_1^{(2)}) \subseteq \{x_0, x_1\}$ and $\supp(m_2^{(2)}) \subseteq \{x_1,x_2\}$. Taking $m_3^{(2)}=m_3^{(1)}$, we now have the sequence $m_1m_2m_3=m_1^{(1)}m_2^{(1)}m_3^{(1)}=m_1^{(2)}m_2^{(2)}m_3^{(2)}$, where we have only used degree two relations. As $m_1^{(2)}$ is the only one divisible by $x_0$, we have $m_1^{(2)}=x_0^{d_0}x_1^{d-d_0}$. With exactly the same arguments, we find a sequence $m_4m_5m_6=m_4^{(1)}m_5^{(1)}m_6^{(1)}=m_4^{(2)}m_5^{(2)}m_6^{(2)}$ so that $m_4^{(2)}=x_0^{d_0}x_1^{d-d_0}=m_1^{(2)}$. 

\vskip 1mm
\noindent \underline{Case 2:} $n=3$. We continue in a similar flavour as the previous case. Here $t=3$ and two of the exponents $d_0,d_1,d_2,d_3$ are less than or equal to $d$ as $d_0+d_1+d_2+d_3=3d$. We may assume $d_0, d_1 \le d$. The key is the following operation. We have $m_1m_2=x_0^{a_0}x_1^{a_1}x_2^{a_2}x_3^{a_3}$, where in particular $a_0,a_1 \le d$. Then, we can refactor $m_1m_2=m_1^{(1)}m_2^{(1)}$ so that $m_1^{(1)}, m_2^{(1)} \in \Omega _{3,d}$ and all $x_0$'s are collected in $m_1^{(1)}$, and all $x_1$'s are collected in $m_2^{(1)}$. In other words, $\supp(m_1^{(1)}) \subseteq \{x_0,x_2,x_3\}$ and $\supp(m_2^{(1)}) \subseteq \{x_1,x_2,x_3\}$. Repeating similar arguments, we can achieve a sequence 
\[
m_1m_2m_3 = m_1^{(1)}m_2^{(1)}m_3^{(1)}=m_1^{(2)}m_2^{(2)}m_3^{(2)}=m_1^{(3)}m_2^{(3)}m_3^{(3)}\]
where $m_1^{(3)}=x_0^{d_0}f_1, \; m_2^{(3)}=x_1^{d_1}f_2, \; m_3^{(3)}=f_3$ and $f_1,f_2,f_3$ are monomials in $x_2,x_3$. The same procedure can be applied to $m_4m_5m_6$. We get $m_4^{(3)}m_5^{(3)}m_6^{(3)}=(x_0^{d_0}f_4)(x_1^{d_1}f_5)f_6$ where $f_4,f_5,f_6$ are monomials in $x_2,x_3$.  We are not done yet, as $f_1$ and $f_4$ are not necessarily equal. Say $f_1$ has smaller $x_2$-degree compared to $f_4$. Then one of $f_2$ or $f_3$ is divisible by $x_2$, so we can apply one of the quadratic relations
\[
(x_0^{d_0}f_1)(x_1^{d_1}f_2) = (x_0^{d_0}\frac{x_2}{x_3}f_1)(x_1^{d_1}\frac{x_3}{x_2}f_2) \quad \mbox{or} \quad (x_0^{d_0}f_1)f_3 =(x_0^{d_0}\frac{x_2}{x_3}f_1)\frac{x_3}{x_2}f_3. 
\]
We can repeat this argument until $f_1$ has been replaced by $f_4$.

\vskip 2mm
\noindent \underline{Case 3:} $n>3$.
 In the following remark, we collect an operation that will be applied several times in the proof of this third case.

\begin{remark} \label{lem:operation}
Take $m, \widetilde{m} \in \Omega_{n,d}$ both divisible by some variable $x_i$. Assume further that $x_j | m$ for some $j \ne i$ and let
 \[ m^{(1)} = \frac{x_i}{x_j}m, \quad \widetilde{m}^{(1)}= \frac{x_j}{x_i}\widetilde{m}.  \]
Then $m^{(1)}\widetilde{m}^{(1)} = m\widetilde{m}$ and if $x_j | \widetilde{m}$ or $|\supp(\widetilde{m})|<t$ we have $m^{(1)}, \widetilde{m}^{(1)} \in \Omega_{n,d}$. 
\end{remark}
We will use the notation $(a_{i,0}, a_{i,1}, \ldots, a_{i,n})$ for the exponent vector of $m_i$, $i=1, \ldots, 6$. Let $i,j$ be a pair $i \in \{1,2,3\}$, $j\in \{4,5,6\}$ such that $|\supp(m_i) \cap \supp(m_j)|$ is maximal among the choices of $i,j$. We can assume $i=1$ and $j=4$. 
We may index our variables so that $\supp(m_1) \cap \supp(m_4) = \{x_0,x_1, \ldots, x_\beta\}$, and $d_0 \le d_1 \le \dots \le d_\beta$. In the first part of the proof, we will prove the existence of a sequence such that $m_1^{(r)}$ and $m_4^{(s)}$ both are divisible by $x_0^{d_0} \cdots x_\beta^{d_\beta}$, if $d_0 + \dots + d_\beta \le d$. If $d_0 + \dots + d_\beta > d$, we will instead find a sequence with 
\[
m_1^{(r)}=m_4^{(s)}=\Big(\prod_{0\le i<k} x_i^{d_i}\Big) x_k^a, \quad \mbox{for a} \ k \le \beta.
\]
As a first step, we aim to obtain a sequence such that $m_1^{(*)}$ is divisible by $x_0^c$ where $c=\min(d_0,d)$.  We assume $a_{1,0}<\min(d_0,d)$, as otherwise we are already done with this first step. Moreover, applying the factorization (\ref{eq:2normal_decomp}) to the monomial $m_2m_3$, we can assume that $|\supp(m_2) \cap \supp(m_3)|\leq 1$. If we are in the hypothesis of Remark \ref{lem:operation} with respect to $m_1$ and $m_2$, or $m_1$ and $m_3$, then we can get the initial step $m_1m_2m_3 = m_1^{(1)}m_2^{(1)}m_3^{(1)}$ in our sequence and $a_{1,0}^{(1)} = a_{1,0}+1$. Else, we are in one of the following cases:
\begin{enumerate}[(i)]
 \item $\supp(m_1) \cap \supp(m_2) = \{x_0\}$, $|\supp(m_2)| = t$ and $a_{3,0}=0$,
 \item $\supp(m_1) \cap \supp(m_2) = \supp(m_1) \cap \supp(m_3) =\{x_0\}$ and $|\supp(m_2)|=|\supp(m_3)| = t$. 
 \end{enumerate} 
Since we have the hypothesis $|\supp(m_2)\cap \supp(m_3)| \leq 1$, we have $|\supp(m_2m_3)| = n+1$, so (ii) reduces to $m_1=x_0^d$  and  $m_1m_2m_3$ is already on the desired form. On the other hand, we can assume in (i) that $\supp(m_2) = \{x_0,x_t,\hdots,x_n\}$ and $\supp(m_3) \subseteq \{x_1,\hdots,x_{t-1},x_{t}\}$.  We first observe the following. If $|\supp(m_{3})| < |\supp(m_2)| = t$ and $x_{t} \in \supp(m_3)$, then we apply Remark \ref{lem:operation} to $m_2$ and $m_3$ with $(i,j) = (t,0)$. Now, we are in position to apply Remark \ref{lem:operation} to $m_1$ and $m_3^{(1)}$. If $|\supp(m_{3})| < |\supp(m_2)| = t$ and $x_{t} \notin \supp(m_3)$, there is some pair $i\ne j$ such that $a_{3,j} > a_{2,i}>0$. Thus, we can do the operation
\begin{equation}\label{eq:op_additional}
m_1^{(1)} = m_1, \quad m_2^{(1)} = \frac{x_j^{a_{2,i}}}{x_i^{a_{2,i}}} m_2, \quad m_3^{(1)} = \frac{x_i^{a_{2,i}}}{x_j^{a_{2,i}}} m_3.
\end{equation}
Either we get in position to apply Remark \ref{lem:operation} to increase $a_{1,0}$, or we get $|\supp(m_{3}^{(1)})| < t$ and $|\supp(m_{2}^{(1)}) \cap \supp(m_{3}^{(1)})| = 1$, and we can argue as before, or we get in (i) with $|\supp(m_{3})| = t$. Thus, it remains to consider (i) with 
 \[
 m_2=x_0^{a_{2,0}}x_t^{a_{2,t}}x_{t+1}^{d_{t+1}} \cdots x_n^{d_n}, \quad    m_3=x_1^{a_{3,1}} \cdots x_t^{a_{3,t}}
 \]
and $|\supp(m_2)| = |\supp(m_3)| = t$. 
It's straightforward to check that the assumption $|\supp(m_2)|=|\supp(m_3)|=t$ implies that there is a pair $i\in\{1,2,3\}$, $j\in \{4,5,6\}$ such that $m_i$, $m_j$ share at least two variables. Hence, we can assume $\beta >0$. So, we have $m_1m_3 = x_0^{a_{1,0}}x_1^{d_1}\cdots x_{t-1}^{d_{t-1}}x_{t}^{a_{3,t}}$ and by hypothesis $d_0 \leq d_1 \leq \cdots \leq d_{\beta}$.  We distinguish the following cases. 
 
\noindent \underline{$a_{1,0}+d_1 \ge d$ and $d_0>d$.} Then, we have $d_t + \dots + d_n<d$, as otherwise the total degree of $m_1m_2m_3$ would exceed $3d$. This allows us to define
\[
m_1^{(1)}=m_1, \quad m_2^{(1)} = x_0^{a_{2,0}-a_{3,t}}x_t^{d_t} \cdots x_n^{d_n}, \quad m_3^{(1)} = x_0^{a_{3,t}}x_1^{a_{3,1}} \cdots x_{t-1}^{a_{3,t-1}}.
\]
Now we can apply Remark \ref{lem:operation} to $m_1^{(1)}$ and $m_3^{(1)}$. 

\noindent \underline{$a_{1,0}+d_1 \ge d$ and $d_0 \le d$.} Let
 \[
 m_1^{(1)}=x_0^{a_{1,0}}x_1^{d-a_{1,0}} , \quad m_2^{(1)}=m_2, \quad m_3^{(1)} = x_{1}^{d_1-d+a_{1,0}}x_{2}^{d_{2}} \cdots x_{t-1}^{d_{t-1}} x_t^{d_{3,t}},
 \]
 and, then, 
 \[
 m_1^{(2)}=\frac{x_0^{a_{2,0}}}{x_1^{a_{2,0}}}m_1^{(1)}=  x_0^{d_0}x_1^{d-d_0}, \quad m_2^{(2)}=\frac{x_1^{a_{2,0}}}{x_0^{a_{2,0}}}m_2^{(1)}=x_1^{a_{2,0}}x_t^{a_{2,t}}x_{t+1}^{d_{t+1}}, \quad m_3^{(2)}=m_3^{(1)}. 
 \]
 Now $m_1^{(2)}$ is on the desired form. 
 
\noindent  \underline{$a_{1,0}+d_1<d$.} In this subcase, we can define
 \[
 m_1^{(1)} = x_0^{a_{1,0}}x_1^{d_1} \cdots x_{k-1}^{d_{k-1}}x_{k}^{a_{1,k}^{(1)}}, \quad m_2^{(1)}=m_2, \quad m_3^{(1)} = x_{k}^{a_{3,k}^{(1)}}x_{k+1}^{d_{k+1}} \cdots x_{t-1}^{d_{t-1}} x_t^{a_{3,t}}, 
 \]
 for some $1<k<t$. Next we apply (\ref{eq:op_additional}) once more. Now none of $m_2^{(1)}$ and $m_3^{(1)}$ are divisible by $x_1$. So, we can not possibly end up in the situation where $m_3^{(*)}$ gets maximal support, as this would mean $\supp(m_2^{(1)}m_3^{(1)})$ containing all variables. Hence, we will at last be able to apply Remark \ref{lem:operation}.

Following the steps described above, we will eventually obtain a $m_1^{(*)}$ divisible by $x_0^c$, $c=\min(d_0,d)$. If $c<d$, we obtain a sequence $m_{1}m_{2}m_{3} = \cdots = m_{1}^{(*)}m_2^{(*)}m_{3}^{(*)}$ where $\supp(m_{2}^{(*)}m_{3}^{(*)}) \subset \{x_1,\hdots, x_n\}$. We repeat the same process to increase now $a_{1,1}^{(*)}$. Moreover, we can perform it without involving the variable $x_0$ and neither cases (i) with $|\supp(3)| = t$ or (ii) will occur. Continuing in this way, we obtain a sequence $m_{1}m_2m_3 = \cdots = m_{1}^{(r)}m_2^{(r)}m_3^{(r)}$ where $m_{1}^{(r)} = x_{0}^{d_{0}}\cdots x_{i}^{d_{i}}x_{i+1}^{a_{1,i+1}^{(r)}}$ and $\supp(m_{2}^{(r)}m_{3}^{(r)}) \subset \{x_{i+1},\hdots,x_n\}$. We apply the steps described above to $m_4m_5m_6$ as well. If $d_0 + d_1 + \dots +d_\beta \geq d$, this means we have obtained $m_1^{(r)} = m_4^{(s)}$, and we are done. Else, we now have $m_1^{(r)}$ and $m_4^{(s)}$ divisible by $x_0^{d_0} \cdots x_\beta^{d_\beta}$ but $m_1^{(r)} \ne m_4^{(s)}$. Let's reset the notation here so that $m_i:=m_i^{(r)}$ for $i=1,2,3$ and $m_j:=m_j^{(s)}$ for $j=4,5,6$. We have
\begin{equation}\label{eq:part1_done}
m_1=x_0^{d_0} \cdots x_\beta^{d_\beta} \widetilde{m}_1, \quad m_4=x_0^{d_0} \cdots x_\beta^{d_\beta} \widetilde{m}_4
\end{equation}
where $\supp(\widetilde m_1) \cap \supp(\widetilde m_4) = \emptyset$, $|\supp(m_{1}) \cap \supp(m_{2}m_{3})| \leq 1$ and $|\supp(m_{4}) \cap \supp(m_5m_6)| \leq 1$. If $|\supp(m_2m_3)| < n$, $m_2m_3$ is a monomial in at most $n-1$ variables, we can assume they are supported in at most $t-1$ variables each. At this step we need to split into two cases. 

\noindent \underline{Case A:} $|\supp(m_1^{(1)})|=|\supp(m_4^{(1)})|=t$. Here
\[
|\supp(m_2^{(1)}m_3^{(1)})| \le n+1-t+1 \le t.
\]
Hence, we can factor $m_2^{(1)}m_3^{(1)}$ in any way we like without exceeding the maximal allowed support. As $\widetilde m_4$ divides $m_2^{(1)}m_3^{(1)}$, we choose $m_2^{(1)}m_3^{(1)}=m_2^{(2)}m_3^{(2)}$ such that $m_2^{(2)}=\widetilde m_4 \widetilde m_2^{(2)}$. In the same way, we factor  $m_5^{(1)}m_6^{(1)}=m_5^{(2)}m_6^{(2)}$ such that $m_5^{(2)}=\widetilde m_1 \widetilde m_5^{(2)}$. But then $\widetilde m_2^{(2)}m_3^{(2)} =\widetilde m_5^{(2)}m_6^{(2)}$, and we can refactor $m_2^{(2)}m_3^{(2)}$ once more so that $m_3^{(3)} = m_6^{(2)}$. 

\noindent \underline{Case B:} $|\supp(m_1^{(1)})|<t$ (or $|\supp(m_4^{(1)})|<t$). Take some $x_i$ in $\supp(\widetilde m_1)$  and $x_j$ in $\supp(\widetilde m_4)$. Then $x_j$ divides $m_2^{(1)}$ or $m_3^{(1)}$, let's say $m_2^{(1)}$. As both $m_1^{(1)}$ and $m_2^{(1)}$ have support smaller that $t$, we can do the operation
\[
m_1^{(2)} = \frac{x_j}{x_i}m_1^{(1)}, \quad m_2^{(2)} = \frac{x_i}{x_j}m_2^{(1)}.
\]
Now $|\supp(m_1^{(2)}) \cap \supp(m_4^{(1)})|>|\supp(m_1^{(1)}) \cap \supp(m_4^{(1)})|$.
So, we go back to the first part of the proof, which will result in (\ref{eq:part1_done}) with a greater value of $\beta$.  

Finally, when $|\supp(m_2m_3)| = n$,  we have $\beta = 0$ and we can assume that $\supp(m_1) = \{x_0,x_1\}$. When $n \geq 4$, the allowed support is $t \geq 3$. So, if $x_{i} \in \supp(m_4)$ with $i \neq 0$, then we can go back to the first part of the proof to increase $a_{1,i}$ as if $x_i \in \supp(m_1)$. Either we obtain the desired sequence in the process or we end with $\beta > 0$ and we finish the proof.
\end{proof}

For $s<\lceil \frac{n+2}{2} \rceil$, the pinched Veronese $\PV(n,d,s)$ is in general not quadratic. Take for instance $n=3$ and $d=5$. Then, $\lceil \frac{n+2}{2} \rceil=3$, and the ideal $\I(Y_{3,5})$ defining the algebra $\PV(3,5,2)$ is minimally generated by $168$ binomials of degree $2$ and $12$ binomials of degree $3$.

\subsection{ Monomial projections parameterized by invariants of finite abelian groups.}\label{subsec:monomials_group_inv} In this subsection, we shift focus to our second family of monomial projections. 
We fix an integer 
$n \geq 2$ and $G \subset \GL(n+1,\kk)$ a finite abelian group of order $d$. We will assume that $\kk$ is an algebraically closed field and $d$ is  not divisible by the characteristic of  $\kk$. Therefore, we can assume that $G$ acts diagonally on $R$. We write $G = \Gamma_1 \oplus \cdots \oplus \Gamma_{s}$,  where each $\Gamma_i \subset \GL(n+1,\kk)$ is a cyclic group of order $d_i$ with $d = d_1\cdots d_s$ and $d_i|d_{i+1}$.  We present $\Gamma_i$ as the group generated by a diagonal matrix
\[M_{d_i;\alpha_{i0}, \hdots,\alpha_{in}} := \diag(e_i^{\alpha_{i0}}, \hdots,
e_i^{\alpha_{in}}) = \left(\begin{array}{lllllll}
e_i^{\alpha_{i0}} & 0 & \cdots & 0\\
0                & e_i^{\alpha_{i1}} & \cdots & 0\\
\vdots & \vdots & \ddots & \vdots\\
0 & 0 & \cdots & e_i^{\alpha_{in}}\\
\end{array}\right),
\]
where $e_i$ is a $d_i$-th primitive root of $1 \in \kk$ and $\GCD(\alpha_{i0}, \hdots,\alpha_{in} ,d_i)=1$. 

The {\em first cyclic extension} of $G$ is the finite abelian group $\bG \subset \GL(n+1,\kk)$ generated by $G$ and the diagonal matrix
\[M_{d;1, \hdots,1} := \diag(e,\hdots,e) = \left(\begin{array}{lllllll}
e & 0 & \cdots & 0\\
0                & e & \cdots & 0\\
\vdots & \vdots & \ddots & \vdots\\
0 & 0 & \cdots & e\\
\end{array}\right),
\]
where $e$ is a $d$-th primitive root of $1 \in \kk$.
By \cite[Theorem 2.2.11]{ThesisLiena}, the set $\cB_1$ of monomial invariants of $G$ of degree $d$ minimally generates the ring $R^{\bG}$ of invariants of $\bG$. Let $\mu_d=|\cB_1|$. 
Analogously, we define the {\em $t$-th cyclic extension} of $G$ as $\bG^t = \langle G,M_{td;1,\hdots,1} \rangle \subset \GL(n+1,\kk)$. As a consequence, for any $t \geq 1$ the algebra $R^{\bG^t}$ is minimally generated by the set of monomial invariants of $G$ of degree $td$. 

\begin{notation}\label{DefG-variety} Let $G \subset \GL(n+1,\kk)$ be a finite abelian group of order $d$. By $X_{n,d}^{G} \subset \PP^{\mu_{n,d}-1}$ we denote the monomial projection of the Veronese variety $X_{n,d} \subset \PP^{N_{n,d}-1}$ parameterized by $\cB_1$. 
\end{notation} 

\begin{remark}\label{Remark: CM} \rm The homogeneous coordinate ring $\A(X_{n,d}^G)$ of $X_{n,d}^G$ is isomorphic to $R^{\bG}$. So, $\A(X_{n,d}^G)$ is always a Cohen-Macaulay algebra (\cite[Proposition 13]{Hochster-Eagon}). 
\end{remark}

Let us see an example. 
\begin{example}\label{ex:group_inv_alg}
Take $n=3, d = 4$ and $G = \langle M_{4;0,1,2,3} \rangle \subset \GL(4,\kk)$ a cyclic group of order $3$. Then, $G$ acts on $R =\kk[x_0,x_1,x_2,x_3]$ by $(x_0,x_1,x_2,x_3) \mapsto (x_0,ex_1,e^2x_2,e^3x_3)$, where $e$ is a $d$-th root of unity. Thus, a monomial is in $R^{\bG}$ if and only if its exponent vector $(a_0,a_1,a_2,a_3) \in \ZZ_{\geq 0}^4$ is a solution of the linear system of congruences
\[\left\{\begin{array}{llllllllll}
y_0 & + & y_1 & + & y_2 & + &y_3 & \equiv & 0 \mod d\\
    &   & y_1 & + &2y_2 & + &3y_3& \equiv & 0 \mod d.
\end{array}\right.\]
The monomials of degree $4$ and $8$ in $R^{\bG}$ are respectively:
\[\begin{array}{lll}
\cB_1 &=& \{x_0^4, x_1^4, x_0x_1^2x_2, x_0^2x_2^2, x_0^2x_1x_3, x_2^4, 
x_1x_2^2x_3, x_1^2x_3^2, x_0x_2x_3^2, x_3^4\}.\\
\cB_2 &=& \{x_0^8, x_1^8, x_0x_1^6x_2, x_0^2x_1^4x_2^2, x_0^3x_1^2x_2^3, x_0^4x_2^4, 
x_3x_0^2x_1^5, x_3x_0^3x_1^3x_2, x_3x_0^4x_1x_2^2, x_3^2x_0^4x_1^2, x_3^2x_0^5x_2, x_2^8,\\ & & x_3x_1x_2^6, x_3^2x_1^2x_2^4, x_3^2x_0x_2^5, x_3^3x_1^3x_2^2, x_3^3x_0x_1x_2 3, 
x_3^4x_1^4, x_3^4 x_0x_1^2x_2, x_3^4x_0^2x_2^2, x_3^5x_0^2x_1, x_3^8\}.
\end{array}\]
We have that $R^{\bG} = \kk[\cB_1]$ and $R^{\bG^2} = \kk[\cB_2]$. The sets $\cB_1$ and $\cB_2$ parameterize monomial projections $X_{3,4}^G$ and $X_{3,8}^G$ of the Veronese threefolds $X_{3,4} \subset \PP^{34}$ and $X_{3,8} \subset \PP^{164}$. 
\end{example}

In contrast to the monomial projections considered in Section \ref{subsec:monomials_group_inv}, the $\kk$-vector spaces spanned by $\cB_1$ are rarely 2-normal. Take $G = \langle M_{d,\alpha_0,\hdots, \alpha_n} \rangle \subset \GL(n+1,\kk)$ a cyclic group of order $d$, and assume $0 \neq \alpha_i < \alpha_j$ for some $0 \leq i,j \leq n$. For any integer $t \geq 1$, the monomial $x_i^{td} \in R^{\bG}$. However, the monomial $x_i^{td-1}x_j$ is not an invariant of $G$. Indeed, $x_i^{td-1}x_j \in R^{\bG}$ if and only if $td\alpha_i+ \alpha_j - \alpha_i \equiv 0 \mod d$, which happens if and only if $\alpha_j = \alpha_i$. Thus, $\dim_{\kk}\kk[\cB_1]_t < \dim_{\kk} R_{td}$ for all $t \geq 1$, so $\cB_1$ is non $2$-normal. 
Still, we have the property that the homogeneous ideal  $\I(X_{n,d}^G)$ of the variety $X_{n,d}^{G}$ is generated by binomials of degree at most $3$, see \cite[Theorem 3.2.6]{ThesisLiena}.  Hence, we are led to consider the following problem.

\begin{problem} \label{pblm2} \rm
For which finite abelian groups $G \subset \GL(n+1,\kk)$ of order $d$ are the algebras $\A(X_{n,d}^G)$ quadratic?
\end{problem}

Indeed, there are finite abelian groups $G$ for which the ideals $\I(X_{n,d}^G)$ are minimally generated by binomials of degrees $2$ and $3$. But we also have families of varieties $X_{n,d}^G$ whose homogeneous ideals are minimally generated by binomials of degree $2$. To describe these families, we need to fix some notation. 

\begin{notation}From now on, given integers
$d \geq 2$ and $0 < \alpha_1 < \alpha_2 < d$, we set $\alpha_1' = \frac{\alpha_1}{\GCD(\alpha_1,d)}$, $d' = \frac{d}{\GCD(\alpha_1,d)}$ and let $\mu, \lambda$ be the unique determined integers with $0 < \lambda \leq d'$ such that $\alpha_2 = \lambda\alpha_1'+\mu d'$.
\end{notation}

In addition, the strictness of the inequality 
\begin{equation}\label{eq:ineq_gcd}
    \GCD(\alpha_1,d)\cdot\GCD(\lambda,d')\cdot\GCD(\lambda - \GCD(\alpha_1,d),d') \ge 1
\end{equation}
plays an important role. 

\begin{proposition}\label{Proposition: degree binomial GT-surfaces} Let $G = \langle M_{d;0,\alpha_1,\alpha_2} \rangle \subset\GL(3,\kk)$ be cyclic group of order $d \geq 2$ with $0 < \alpha_1 < \alpha_2 < d$. Then, $\A(X_{2,d}^G)$ is a quadratic algebra if and only if the inequality (\ref{eq:ineq_gcd}) is strict. 
\end{proposition}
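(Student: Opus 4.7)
By \cite[Theorem 3.2.6]{ThesisLiena} the ideal $\I(X_{2,d}^G)$ is generated by binomials of degree at most~$3$, so the algebra $\A(X_{2,d}^G)$ is quadratic if and only if every cubic binomial of $\I(X_{2,d}^G)$ lies in $(\I(X_{2,d}^G)_2)$. By Proposition \ref{Proposition: Criterion generation} this is equivalent to the combinatorial statement that, for each equality $m_1m_2m_3 = m_4m_5m_6$ with $m_i \in \cB_1$, the triples $(m_1,m_2,m_3)$ and $(m_4,m_5,m_6)$ can be connected by a sequence of ``quadratic swaps'', i.e.\ swaps replacing two factors of the triple by a different pair having the same product, both new factors again lying in $\cB_1$. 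The plan is to carry out this reduction, translate the existence of invariants and of swaps into arithmetic conditions on $(\alpha_1,\alpha_2,d)$, and then settle the two implications separately.

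The first step is an explicit combinatorial description of $\cB_1$. A monomial $x_0^{a_0}x_1^{a_1}x_2^{a_2}$ of degree $d$ is $G$-invariant iff $\alpha_1 a_1 + \alpha_2 a_2 \equiv 0 \pmod d$. Dividing by $\GCD(\alpha_1,d)$ and substituting $\alpha_2 = \lambda \alpha_1' + \mu d'$, one rewrites this congruence as $a_1 + \lambda a_2 \equiv 0 \pmod{d'}$ together with a divisibility condition on $a_2$ imposed by $\GCD(\alpha_1,d)$. The three GCDs appearing in (\ref{eq:ineq_gcd}) now acquire clear combinatorial meanings: $\GCD(\alpha_1,d)$ controls which residues of $a_2$ are admissible; $\GCD(\lambda,d')$ controls, by symmetry, the companion condition for $a_1$; and $\GCD(\lambda - \GCD(\alpha_1,d), d')$ detects the existence of an extra invariant realizing a particular mixed exponent pattern that is precisely what is needed to carry out a would-be-forbidden quadratic swap.

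For the implication ``(\ref{eq:ineq_gcd}) is strict $\Rightarrow$ $\A(X_{2,d}^G)$ is quadratic'', I split into three cases according to which of the three GCDs exceeds $1$. In each case the extra monomial in $\cB_1$ guaranteed by the non-triviality of that GCD supplies, for any given cubic relation $m_1m_2m_3 = m_4m_5m_6$, an intermediate invariant that allows one to perform a quadratic swap bringing the triple $(m_1,m_2,m_3)$ strictly closer (in a suitable numerical measure on exponent vectors) to $(m_4,m_5,m_6)$. Iterating produces the required $\I_3$-sequence, and Proposition \ref{Proposition: Criterion generation} concludes that the cubic binomial belongs to $(\I(X_{2,d}^G)_2)$. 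For the converse, I argue by contrapositive: when all three GCDs equal $1$ the set $\cB_1$ is sparse enough that a specific cubic binomial is ``indecomposable'', built from the extremal invariants forced by the parameters $(\alpha_1,\alpha_2,\lambda)$; the non-existence of an $\I_3$-sequence for it is verified directly from the arithmetic description of $\cB_1$.

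The main obstacle is the case analysis in the ``$\Leftarrow$'' direction. The delicate point is that a quadratic swap must preserve $G$-invariance of \emph{both} new factors simultaneously, while at the same time decreasing the chosen distance to the target triple; book-keeping requires working modulo $d$ and $d'$ in parallel and showing that the extra invariant produced by whichever GCD is strictly greater than $1$ is always available at the right moment in the reduction. The secondary obstacle, exhibiting the explicit obstruction in the ``$\Rightarrow$'' direction, is handled by a direct calculation once the arithmetic parameterization of $\cB_1$ has been set up.
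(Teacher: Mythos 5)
There is a genuine gap, and it is worth noting first that the paper itself does not prove this proposition at all: its ``proof'' is a citation to \cite[Corollary 3.1.24 and Proposition 3.2.8]{ThesisLiena}, so you are attempting a from-scratch argument for a result the authors deliberately outsource. Your framework is the right one and matches the spirit of the cited thesis results: generation of $\I(X_{2,d}^G)$ in degree at most $3$ via \cite[Theorem 3.2.6]{ThesisLiena}, reduction of ``quadratic'' to the existence of an $\I(X_{2,d}^G)_3$-sequence for every nontrivial cubic binomial via Proposition \ref{Proposition: Criterion generation}, and then an arithmetic case analysis on the three GCDs in (\ref{eq:ineq_gcd}). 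But what you submit is a plan, not a proof. The two steps that carry all the mathematical content are exactly the ones you defer: in the direction ``strict inequality $\Rightarrow$ quadratic'' you assert that whichever GCD exceeds $1$ supplies an intermediate invariant allowing a quadratic swap that decreases a ``suitable numerical measure,'' but you never define the measure, never construct the swap, and never verify that both new factors are invariant and that the process terminates at the target triple; in the direction ``equality $\Rightarrow$ not quadratic'' you promise an explicit ``indecomposable'' cubic binomial and a verification that it admits no $\I_3$-sequence, but neither the binomial nor the verification is given. Acknowledging these as ``obstacles'' does not discharge them; without them the statement is not proved in either direction.

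There is also a concrete arithmetic slip in the one step you do spell out. With $g=\GCD(\alpha_1,d)$, $\alpha_1=g\alpha_1'$, $d=gd'$ and $\alpha_2=\lambda\alpha_1'+\mu d'$, the invariance condition $\alpha_1 a_1+\alpha_2 a_2\equiv 0 \pmod d$ reduces modulo $d'$ (using $\GCD(\alpha_1',d')=1$) to $g\,a_1+\lambda a_2\equiv 0 \pmod{d'}$, together with the residual condition modulo $g$; it is not of the form ``$a_1+\lambda a_2\equiv 0 \pmod{d'}$ plus a divisibility condition on $a_2$'' as you state. Since your entire case analysis is to be driven by this parameterization of $\cB_1$, the error would propagate into the interpretation of the three GCDs and into both implications. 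To make this a proof you would need to redo the reduction correctly, write down explicitly the swap moves available in each of the three cases $\GCD(\alpha_1,d)>1$, $\GCD(\lambda,d')>1$, $\GCD(\lambda-\GCD(\alpha_1,d),d')>1$, prove termination of the reduction, and exhibit and verify the obstructing cubic when all three GCDs equal $1$ --- in other words, reproduce the content of the thesis results the paper cites.
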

\begin{proof} See \cite[Corollary 3.1.24 and Proposition 3.2.8]{ThesisLiena}.
\end{proof}

\begin{proposition}\label{Proposition: degree binomial d prime} Let $G = \langle M_{d;0,\rho _1, \hdots, \rho_n} \rangle \subset \GL(n+1,\kk)$ be a cyclic group $G = \langle M_{d;0,\rho _1, \hdots, \rho_n} \rangle \subset \GL(n+1,\kk)$ of order $d$. If there are entries $\rho_i < \rho_j < \rho_k$ such that we have equality in
(\ref{eq:ineq_gcd}) with $\alpha_1=\rho_j-\rho_i$ and $\alpha_2=\rho_k-\rho_i$, then $\A(X_{n,d}^{G})$ is not quadratic.
\end{proposition}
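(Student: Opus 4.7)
The plan is to reduce to the surface case $n=2$ via Proposition \ref{Proposition: degree binomial GT-surfaces} and then lift the obstruction to quadraticness using the combinatorial criterion of Proposition \ref{Proposition: Criterion generation}.

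First, fix three entries $\rho_i < \rho_j < \rho_k$ that realize equality in (\ref{eq:ineq_gcd}) with $\alpha_1 = \rho_j - \rho_i$ and $\alpha_2 = \rho_k - \rho_i$, and consider the auxiliary cyclic group $G' = \langle M_{d;0,\alpha_1,\alpha_2} \rangle \subset \GL(3,\kk)$ acting on $R' = \kk[x_i,x_j,x_k]$. A key preliminary observation is that the set $\cB_1'$ of degree-$d$ monomial invariants of $\bG'$ in $R'$ is exactly the subset $\{m \in \cB_1 \mid \supp(m) \subseteq \{x_i,x_j,x_k\}\}$. Indeed, for a monomial $m = x_i^ax_j^bx_k^c$ of degree $d$, the condition $a\rho_i+b\rho_j+c\rho_k \equiv 0 \pmod{d}$ is equivalent, after subtracting $d\rho_i$, to $b\alpha_1+c\alpha_2 \equiv 0 \pmod{d}$. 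Thus $\cB_1'$ parameterizes the monomial projection $X_{2,d}^{G'}$, and the natural inclusion $\cB_1' \subseteq \cB_1$ induces an inclusion of polynomial rings $S' \subseteq S$ that is compatible with the surjections onto $R^{\bG'}$ and $R^{\bG}$.

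By Proposition \ref{Proposition: degree binomial GT-surfaces}, $\A(X_{2,d}^{G'})$ is not quadratic. Since the ideal $\I(X_{2,d}^{G'})$ is generated in degrees at most $3$ (by \cite[Theorem 3.2.6]{ThesisLiena}), there must exist a cubic binomial $B = w^{\alpha_+} - w^{\alpha_-} \in \I(X_{2,d}^{G'})_3$ that is not in $(\I(X_{2,d}^{G'})_2)$. Equivalently, by Proposition \ref{Proposition: Criterion generation}, no $\I(X_{2,d}^{G'})_3$-sequence from $w^{\alpha_+}$ to $w^{\alpha_-}$ exists. Our goal is to view $B$ as a cubic binomial in $\I(X_{n,d}^G)_3$ and show that it still fails to lie in the ideal of quadrics of $\I(X_{n,d}^G)$, which will prove $\A(X_{n,d}^G)$ is not quadratic.

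The decisive step is the following support-preservation argument, which I expect to be the technical heart of the proof. Any $\I(X_{n,d}^G)_3$-sequence $(w^{(1)} = w^{\alpha_+}, w^{(2)},\dots, w^{(t)} = w^{\alpha_-})$ must in fact consist only of monomials built from variables of $S'$, and hence must already be an $\I(X_{2,d}^{G'})_3$-sequence. Indeed, applying the surjection $\rho\colon S \to R$, each $\rho(w^{(\ell)})$ equals the fixed monomial $\rho(w^{\alpha_+}) \in R'$, which is supported in $\{x_i,x_j,x_k\}$. Since $\rho(w^{(\ell)})$ is the product of the three degree-$d$ monomials corresponding to the variables appearing in $w^{(\ell)}$, and exponents are additive under multiplication, each of those three monomials must itself be supported in $\{x_i,x_j,x_k\}$, i.e., belongs to $\cB_1'$. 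Hence $w^{(\ell)} \in S'$ for every $\ell$, and the trivial cubic binomials $w^{(\ell)} - w^{(\ell+1)}$ all lie in $\I(X_{2,d}^{G'})_3$. Since no such sequence exists in the surface case, it does not exist in the higher-dimensional case either, so $B \notin (\I(X_{n,d}^G)_2)$ and $\A(X_{n,d}^G)$ cannot be quadratic. The crux of the argument is this rigidity: the multiplicative structure on monomials, together with the fact that $\rho(w^{\alpha_\pm})$ uses only three variables, forces any factorization through quadratic relations to stay inside the three-variable subalgebra.
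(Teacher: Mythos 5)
Your proof is correct and takes essentially the same route as the paper: reduce to the surface case via Proposition \ref{Proposition: degree binomial GT-surfaces} applied to $G_1=\langle M_{d;0,\rho_j-\rho_i,\rho_k-\rho_i}\rangle\subset\GL(3,\kk)$, then use Proposition \ref{Proposition: Criterion generation} together with the observation that any $\I(X_{n,d}^G)_3$-sequence for the lifted cubic is forced to stay inside the three-variable subring and hence would be an $\I(X_{2,d}^{G_1})_3$-sequence. Your support-preservation paragraph simply spells out the step the paper dispatches with ``by construction.''
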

\begin{proof} It follows directly from Proposition \ref{Proposition: degree binomial GT-surfaces} taking into account that $\I(X_{n,d}^{G})$ contains the binomial generators of the surface $X_{2,d}^{G_1}$, where $G_1 = \langle M_{d;0,\alpha_j-\alpha_i,\alpha_k-\alpha_i} \rangle \subset \GL(3,\kk)$. Indeed, by construction, if a binomial of degree $3$ in $\I(X_{2,d}^{G_1})$ does not admit a $\I(X_{2,d}^{G_1})_3$-sequence, then it does not admit an $\I(X_{n,d}^G)_3$-sequence. So by Proposition \ref{Proposition: Criterion generation}, it cannot be expressed as a linear combination of binomials of degree $2$ in $\I(X_{n,d}^{G})$.
\end{proof}
In the next section, applying Lemma \ref{lemma:GB_extension}, we will get examples of varieties $X_{n,d
}^G$ of any dimension $n\ge 2$ such that the associated algebra $\A(X_{n,d}^G)$ is quadratic. We finish this section with an example studied in \cite{CM-R}, for which the associated algebra is quadratic.  

\begin{proposition}\label{Proposition: degree binomial GT-threefold}
 Let $G = \langle M_{d;0,1,2,3} \rangle \subset \GL(4,\kk)$ be a cyclic group  of order $d \geq 4$. Then, $\A(X_{n,d}^G)$ is a quadratic algebra if and only if $d$ is even.
\end{proposition}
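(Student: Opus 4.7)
The plan is to prove the two implications separately: the ``only if'' direction uses the machinery developed in this section via Proposition \ref{Proposition: degree binomial d prime}, while the ``if'' direction is essentially the content of prior work \cite{CM-R}.

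For the ``only if'' direction, we prove the contrapositive: if $d$ is odd, then $\A(X_{3,d}^G)$ is not quadratic. We invoke Proposition \ref{Proposition: degree binomial d prime} with the sub-triple $(\rho_i, \rho_j, \rho_k) = (0, 1, 2)$ chosen from the weight vector $(0, 1, 2, 3)$ of $G$. This yields $\alpha_1 = \rho_j - \rho_i = 1$ and $\alpha_2 = \rho_k - \rho_i = 2$. Since $\GCD(\alpha_1, d) = 1$, we have $\alpha_1' = 1$ and $d' = d$, and the decomposition $\alpha_2 = \lambda \alpha_1' + \mu d'$ with $0 < \lambda \le d'$ forces $\lambda = 2$ and $\mu = 0$ (using $d \geq 4 > 2$). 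The left-hand side of (\ref{eq:ineq_gcd}) thus evaluates to $\GCD(1, d) \cdot \GCD(2, d) \cdot \GCD(2 - 1, d) = \GCD(2, d)$, which equals $1$ precisely when $d$ is odd. Hence equality holds in (\ref{eq:ineq_gcd}), and Proposition \ref{Proposition: degree binomial d prime} yields that $\A(X_{3,d}^G)$ is not quadratic.

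For the ``if'' direction, we must show that $\I(X_{3,d}^G)$ is generated by quadrics when $d$ is even. By \cite[Theorem 3.2.6]{ThesisLiena}, this ideal is already generated by binomials of degree at most $3$, so it suffices to show that every cubic binomial $w^{\alpha_{+}} - w^{\alpha_{-}} \in \I(X_{3,d}^G)_3$ admits an $\I(X_{3,d}^G)_3$-sequence in the sense of Definition \ref{Definition: Ik-sequence}; Proposition \ref{Proposition: Criterion generation} will then imply that the cubics are redundant. Equivalently, given any two factorizations $m_1 m_2 m_3 = m_4 m_5 m_6$ of a degree-$3d$ $G$-invariant into degree-$d$ $G$-invariants, we must interpolate between the two triples by a chain of pairwise quadratic swaps $m_i m_j = m_i' m_j'$ preserving $G$-invariance.

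The main obstacle is the combinatorial verification that such chains always exist when $d$ is even. The parity hypothesis enters through the existence of the pivotal degree-$d$ invariant $x_1^{d/2} x_3^{d/2}$, which supplies the quadratic relation $x_1^d \cdot x_3^d = (x_1^{d/2} x_3^{d/2})^2$ that is simply unavailable when $d$ is odd; combined with the quadratic relations coming from the two-variable sub-groups of $G$ acting on coordinate pairs (via Proposition \ref{Proposition: degree binomial GT-surfaces}), this additional relation turns out to be enough to perform all required reductions. Organizing the case analysis by the supports of $m_1, \ldots, m_6$ and by whether any of them equals one of the pure powers $x_1^d, x_3^d$ is the technical core; this work is carried out in \cite{CM-R}, and we appeal to it for the sufficiency.
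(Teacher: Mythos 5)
Your proposal is correct, and in the decisive direction it takes the same route as the paper: the paper's entire proof is the citation to \cite[Corollary 5.7]{CM-R}, which covers both implications, and your sufficiency argument likewise defers to \cite{CM-R} (your remarks about the invariant $x_1^{d/2}x_3^{d/2}$ and the quadratic relation $x_1^d\cdot x_3^d=(x_1^{d/2}x_3^{d/2})^2$ are only a heuristic for why parity matters, not a proof, but you acknowledge this and appeal to the reference, exactly as the paper does). The genuine difference is that you re-derive the ``only if'' direction internally: applying Proposition \ref{Proposition: degree binomial d prime} to the sub-triple $(0,1,2)$ gives $\alpha_1=1$, $\alpha_2=2$, hence $\alpha_1'=1$, $d'=d$, $\lambda=2$, $\mu=0$ (using $d\geq 4$), and the left-hand side of (\ref{eq:ineq_gcd}) is $\GCD(2,d)$, which equals $1$ exactly when $d$ is odd; this computation is correct (and insensitive to whether one takes the triple $(0,1,2)$ or $(1,2,3)$). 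What this buys is a self-contained necessity proof resting only on Propositions \ref{Proposition: degree binomial GT-surfaces} and \ref{Proposition: degree binomial d prime}, whereas the paper outsources that direction too; what it does not buy is any independent verification of the harder ``if'' direction, which in both your write-up and the paper ultimately rests on \cite{CM-R}.
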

\begin{proof}
See \cite[Corollary 5.7]{CM-R}.
\end{proof}

 Motivated by the results of this section, we  will investigate the Koszulness and the existence of a quadratic  Gr\"obner basis for the ideals $\I(X_{n,d}^G)$. 

\section{Koszulness of monomial projections of Veronese varieties}
\label{Section: Koszulness}
This section is entirely devoted to studying the Koszulness of the homogeneous coordinate ring $\kk[\Omega_{n,d}]$  of a monomial projection of a Veronese variety, as well as whether $\kk[\Omega_{n,d}]$ is G-quadratic.

An application of \cite[Corollary 6.10(2)]{CHTV} states that the $d$-th graded component of a monomial complete intersection ideal
\[
(x_0^{\lambda+1}, \ldots, x_n^{\lambda+1})_{d} = \{ x_0^{a_0} \cdots x_n^{a_n} \in \cM_{n,d} \ | \ a_i>\lambda \ \text{for some} \ i \}
\]
generates a Koszul algebra if $d- \lambda > \frac{(\lambda+1) n}{n+1}$. This provides a proof for Koszulness of some quadratic algebras from Theorem \ref{thm:pv_quadratic}.

\begin{proposition}\label{prop:koszul_ex}
Take positive integers $d, n, \lambda$ and let 
\[
\Omega_{n,d}=\{x_0^{a_0} \cdots x_n^{a_n}\in \cM_{n,d} \ | \ a_i > \lambda \ \text{for some} \ i\}. 
\]
Let $s$ be the greatest integer such that $d >s \lambda$. Then $\Omega_{n,d}$ contains all $m \in \cM_{n,d}$ with $|\supp(m)| \le s$, and $\kk[\Omega_{n,d}]$ is Koszul if 
$s > \frac{(\lambda+1)n}{\lambda(n+1)}.$
\end{proposition}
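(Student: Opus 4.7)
The plan is to split the proposition into two essentially independent claims: the combinatorial assertion that $\Omega_{n,d}$ contains every degree-$d$ monomial with support size at most $s$, and the Koszulness of $\kk[\Omega_{n,d}]$ under the numerical hypothesis on $s$.

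For the containment, I would argue by a one-line pigeonhole. If $m = x_0^{a_0}\cdots x_n^{a_n} \in \cM_{n,d}$ satisfies $|\supp(m)| \le s$, then at most $s$ of the exponents $a_i$ are positive. If in addition $a_i \le \lambda$ for every $i$, summing gives $d = \sum a_i \le s\lambda$, contradicting the defining property $d > s\lambda$ of $s$. Hence some $a_i > \lambda$, and $m \in \Omega_{n,d}$. This step should be immediate.

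For Koszulness, the key observation is that $\Omega_{n,d}$ is precisely the $d$-th graded component of the monomial complete intersection ideal $(x_0^{\lambda+1}, \ldots, x_n^{\lambda+1})$, since the condition $a_i > \lambda$ on a degree-$d$ monomial is equivalent to $x_i^{\lambda+1}$ dividing it. This places us exactly in the setting of the CHTV criterion recalled just before the proposition, which asserts Koszulness whenever $d - \lambda > \frac{(\lambda+1)n}{n+1}$. The remaining task is to derive this inequality from the hypothesis: multiplying $s > \frac{(\lambda+1)n}{\lambda(n+1)}$ by $\lambda > 0$ gives $s\lambda > \frac{(\lambda+1)n}{n+1}$, and combining with $d > s\lambda$ yields $d > \frac{(\lambda+1)n}{n+1}$. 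Turning this into the stronger bound $d - \lambda > \frac{(\lambda+1)n}{n+1}$ required by CHTV should follow from the integrality of $d, s, \lambda$ together with the maximality of $s$ in the definition $d > s\lambda$.

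The main obstacle, such as it is, is exactly this last inequality chase; the combinatorial containment is a one-line pigeonhole, and once the CHTV hypothesis is verified, Koszulness follows at once. Small or degenerate regimes in which the bound is nearly tight (for instance when $\Omega_{n,d}$ shrinks to pure powers) may warrant a separate check, but in those cases $\kk[\Omega_{n,d}]$ is already a polynomial ring and Koszulness is automatic, so no real difficulty arises.
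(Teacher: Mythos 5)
Your strategy coincides with the paper's: the containment is the pigeonhole you describe, and Koszulness comes from identifying $\Omega_{n,d}$ with the degree-$d$ component of $(x_0^{\lambda+1},\ldots,x_n^{\lambda+1})$ and feeding it into the CHTV criterion $d-\lambda>\frac{(\lambda+1)n}{n+1}$. But the entire content of the paper's argument is precisely the inequality verification that you defer with ``should follow from integrality and maximality of $s$'', and the pointers you give do not close it. Maximality of $s$ only yields the upper bound $d\le(s+1)\lambda$, which is useless for bounding $d-\lambda$ from below; what one actually uses is that $d>s\lambda$ with $d$ an integer gives $d\ge s\lambda+1$, hence $d-\lambda\ge\lambda(s-1)+1$, and as soon as $s\ge 2$ this is $\ge\lambda+1>(\lambda+1)\frac{n}{n+1}=\frac{(\lambda+1)n}{n+1}$. (The paper compresses this into the chain $d-\lambda>\lambda(s-1)\ge\frac{(\lambda+1)n}{n+1}$; the version with the extra $+1$ coming from integrality of $d$ is the one that actually goes through, and the hypothesis $s>\frac{(\lambda+1)n}{\lambda(n+1)}$ forces $s\ge 2$ exactly when $\lambda\le n$.)

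Your dismissal of the tight regime is also incorrect. When $\lambda>n$ the hypothesis admits $s=1$, i.e.\ $\lambda<d\le 2\lambda$, and there $\kk[\Omega_{n,d}]$ is in general not a polynomial ring: for $n=2$, $\lambda=3$, $d=5$ one gets the nine monomials $x_i^5$ and $x_i^4x_j$, with relations such as $(x_0^4x_1)(x_1^4x_0)=(x_0^5)(x_1^5)$, while the CHTV inequality fails since $d-\lambda=2\not>\tfrac{8}{3}$. So in that regime neither your argument nor the quoted criterion applies, and ``Koszulness is automatic'' is unjustified. In short: right idea, same route as the paper, but the decisive inequality is not actually established, the appeal to maximality of $s$ points in the wrong direction, and the edge-case fallback is false as stated.
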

\begin{proof}
The two inequalities $d >s \lambda$ and $s > \frac{(\lambda+1)n}{\lambda(n+1)}$ imply 
\[
d-\lambda > \lambda(s-1) \ge \frac{(\lambda+1) n}{n+1}. \qedhere
\]
\end{proof}

We single out two families of Koszul algebras from Proposition \ref{prop:koszul_ex}, obtained by removing full support monomials from $\cM_{n,d}$.

\begin{proposition}\label{coro:koszul_ex}
The following two sets of monomials generate Koszul algebras. 
\begin{enumerate}[(i)]
\item $\cM_{n,d} \setminus \{(x_0 \cdots x_n)^\lambda\}$ with $d=\lambda(n+1)$.
\item $\cM_{n,d} \setminus \{x_0^{\lambda-1} (x_1 \cdots x_n)^\lambda, \ldots,x_n^{\lambda-1} (x_0 \cdots x_{n-1})^{\lambda} \}$ with $d=\lambda (n+1) -1$.
\end{enumerate}
\end{proposition}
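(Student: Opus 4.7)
The plan is to realize each family as an instance of $\Omega_{n,d}$ from Proposition \ref{prop:koszul_ex} and then verify the numerical hypothesis of that proposition.

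For (i), I would apply Proposition \ref{prop:koszul_ex} with the given $\lambda$. A monomial $x_0^{a_0}\cdots x_n^{a_n}$ of degree $d=\lambda(n+1)$ lies outside $\Omega_{n,d}$ exactly when $a_i\le\lambda$ for every $i$. Since $\sum a_i=\lambda(n+1)$, this forces $a_i=\lambda$ for all $i$, giving $(x_0\cdots x_n)^{\lambda}$ as the unique excluded monomial. For (ii), the same setup with $d=\lambda(n+1)-1$ yields $\sum(\lambda-a_i)=1$, so exactly one exponent drops from $\lambda$ to $\lambda-1$; this produces the $n+1$ excluded monomials listed.

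Next I would compute the integer $s$ from Proposition \ref{prop:koszul_ex}. In case (i), the condition $s\lambda<\lambda(n+1)$ gives $s=n$, and the Koszul inequality $n>(\lambda+1)n/(\lambda(n+1))$ reduces to $\lambda n>1$. In case (ii), for $\lambda\ge 2$ we again obtain $s=n$ and the same condition $\lambda n>1$; for $\lambda=1$ the condition $s<n$ gives $s=n-1$, and the inequality becomes $n^{2}-2n-1>0$, i.e.\ $n\ge 3$.

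The only obstacle is a small collection of corner cases where Proposition \ref{prop:koszul_ex} does not directly apply: namely $\lambda=n=1$ in (i), and $\lambda=1$ with $n\in\{1,2\}$ in (ii). In each such case a direct inspection shows that $\kk[\Omega_{n,d}]$ is either a trivial (empty) algebra or a polynomial ring in the pure powers $x_0^{d},\dots,x_n^{d}$, and is therefore Koszul. Outside of these corners, Proposition \ref{prop:koszul_ex} applies immediately and concludes the proof.
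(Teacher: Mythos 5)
Your overall strategy is the same as the paper's: identify each family with the set $\Omega_{n,d}$ of Proposition \ref{prop:koszul_ex}, compute $s$, check the inequality, and dispose of the small degenerate cases by hand. Your identification of the excluded monomials in (i) and (ii) is correct, and your explicit treatment of the corners ((i) with $\lambda=n=1$, and (ii) with $\lambda=1$, $n\in\{1,2\}$, where the algebra is a polynomial ring in pure powers or trivial) is in fact more careful than the paper's one-line dismissal.

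However, there is a genuine gap in one case: (i) with $\lambda=1$, $n=2$, i.e.\ $d=3$, where $\Omega_{2,3}=\cM_{2,3}\setminus\{x_0x_1x_2\}$ is exactly the pinched Veronese $\PV(2,3,2)$. You subsume this case under Proposition \ref{prop:koszul_ex} because its literal hypothesis $s=2>\frac{(\lambda+1)n}{\lambda(n+1)}=\frac{4}{3}$ is satisfied, but the proof of that proposition runs through the chain $d-\lambda>\lambda(s-1)\ge\frac{(\lambda+1)n}{n+1}$, and for $(\lambda,n,s)=(1,2,2)$ the second inequality reads $1\ge\frac{4}{3}$, which is false; the CHTV-based mechanism behind Proposition \ref{prop:koszul_ex} does not reach this borderline case. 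This is not an accident of bookkeeping: the Koszulness of $\kk[\cM_{2,3}\setminus\{x_0x_1x_2\}]$ is precisely Caviglia's theorem that the pinched Veronese is Koszul, and it is not a formal consequence of the general bound. Accordingly, the paper's proof explicitly excludes (i) with $n=2$, $d=3$ from the application of Proposition \ref{prop:koszul_ex} and settles it by citing \cite[Theorem 3.1]{C}. To make your argument complete, add this case to your list of exceptions and invoke Caviglia's theorem (via the identification with $\PV(2,3,2)$), rather than relying on the blanket application of Proposition \ref{prop:koszul_ex}.
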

\begin{proof} Applying Proposition \ref{prop:koszul_ex}, we obtain the result except in (i) when $n = 2$ and $d = 3$. In this case, we have $\kk[\cM_{2,3}\setminus \{x_0x_1x_2\}]$,  which is isomorphic to the homogeneous coordinate ring of the pinched Veronese $\PV(2,3,2)$ and, by \cite[Theorem 3.1]{C}, it is a Koszul algebra. 
\end{proof}

It is not known whether all algebras in Theorem \ref{thm:pv_quadratic} are Koszul. We highlight the first open case, which appears in $n=2$, $d=4$. 

\begin{question} \label{Q1}  Let $Y_{2,4}$ be
the monomial projection  parameterized by $\cM_{2,4} \setminus \{x_0x_1x_2^2\}$. Is $\A(Y_{2,4})$ a Koszul algebra?
\end{question}

\begin{remark}
We have not been able to detect a term order for which the algebra $\A(Y_{2,4})$ in Question \ref{Q1} is G-quadratic. However, this does not exclude the possibility of $\A(Y_{2,4})$ being Koszul. For example, the pinched Veronese $\PV(2,3,2)$ is proved to be Koszul in \cite{C, Caviglia-Conca}, but as remarked in \cite{Conca-DeNegri-Rossi}, there is no term order for which it is G-quadratic in the given coordinates. 
\end{remark}

In the remaining part of this section, we deal with our second family of monomial projections. We fix $\kk$ an algebraically closed field of characteristic zero. Recall that $X_{n,d}^G$ denotes the variety parameterized by the set $\cB_1$ of monomial invariants of degree $d$ of a finite abelian group $G \subset \GL(n+1,\kk)$ of order $d$.  
We start showing that for any surface $X_{2,d}^G$, with $G \subset \GL(3,\kk)$, the algebra $A(X_{2,d}^G)$ is Koszul if and only if it is quadratic. We will use the following result.

\begin{proposition}\label{Proposition: Katthan-Yanagawa} Let $A$ be a standard Cohen-Macaulay domain over a field of characteristic zero with short $h$-vector $(h_0, h_1, h_2 \neq 0)$. If $h_2 < h_1$, then $A$ is Koszul.
\end{proposition}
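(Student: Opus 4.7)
The plan is to invoke a reduction to the Artinian case, since the statement is clearly designed so that the ``short'' $h$-vector hypothesis becomes the actual combinatorial constraint. First, using that $A$ is a standard graded Cohen-Macaulay domain over an infinite (characteristic zero) field, I would pick a maximal regular sequence of linear forms $\ell_1,\hdots,\ell_r \in A_1$ and form the Artinian reduction $\overline{A} = A/(\ell_1,\hdots,\ell_r)$. By construction, $\overline{A}$ is a standard graded Artinian $\kk$-algebra with the same $h$-vector $(1,h_1,h_2)$ as $A$. A classical result of Backelin-Fr\"oberg then ensures that $A$ is Koszul if and only if $\overline{A}$ is Koszul, so the problem is reduced to the Artinian setting.

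Next, I would analyze the structure of $\overline{A}$. Being Artinian with $h$-vector $(1,h_1,h_2)$, it is presented as $\overline{A} = \kk[y_1,\hdots,y_{h_1}]/J$ where $J$ is a homogeneous ideal such that $\overline{A}_k = 0$ for $k \geq 3$. The strict inequality $h_2 < h_1$ means that the quadratic part $J_2$ has codimension strictly less than $h_1$ in the space $\binom{h_1+1}{2}$ of quadrics, which, combined with the fact that all cubics lie in $J$, provides enough quadratic syzygies to set up a Koszul filtration. The concrete mechanism—and this is where I would cite the Katth\"an-Yanagawa result directly—is that one can produce a chain of ideals generated by linear forms linking $0$ to the maximal ideal, each step of which has a linear annihilator, thereby certifying Koszulness via Conca-Trung-Valla's Koszul filtration criterion.

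The main obstacle is precisely the second step: the numerical inequality $h_2<h_1$ does not by itself prescribe the algebraic shape of the defining relations, and one needs the characteristic zero hypothesis (passing through generic initial ideals or $\mathrm{sl}_2$-representation arguments) to convert the dimension count into a structural statement about quadrics in $J$. This is the delicate content of the Katth\"an-Yanagawa theorem, and rather than reproducing it I would simply quote it as a black box, noting that the hypotheses ``CM domain'', ``short $h$-vector'', ``$\mathrm{char}\,\kk = 0$'', and ``$h_2 < h_1$'' are exactly those required.
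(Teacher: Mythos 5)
Since the paper's own ``proof'' of this proposition is nothing more than the citation to Conca--Iyengar--Nguyen--R\"omer, Theorem 5.2(1), your final move---quoting the result as a black box and observing that the hypotheses ``CM domain'', ``short $h$-vector'', ``$\Char\kk=0$'' and ``$h_2<h_1$'' are exactly those of the statement---is in substance the same as what the paper does, and by itself it would be fine.

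However, the scaffolding you build around that citation is internally inconsistent and, read literally, contains a step that fails. Once you pass to the Artinian reduction $\overline{A}=A/(\ell_1,\hdots,\ell_r)$, the ring is no longer a domain, whereas the theorem you intend to invoke (whose hypotheses you yourself list) requires the domain assumption; so it cannot be applied to $\overline{A}$. The numerical data alone cannot replace it: an Artinian algebra with $h$-vector $(1,h_1,h_2)$ and $h_2<h_1$ need not even be quadratic, e.g.\ $\kk[x,y]/(x^2,xy,y^3)$ has $h$-vector $(1,2,1)$ but a cubic minimal generator, hence is not Koszul. So your second paragraph's claim that $h_2<h_1$ together with $\overline{A}_{\geq 3}=0$ ``provides enough quadratic syzygies to set up a Koszul filtration'' is not correct as stated; the domain hypothesis is essential structural input consumed inside the quoted theorem, and it must be handed to the theorem before any Artinian reduction is performed. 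The Backelin--Fr\"oberg transfer of Koszulness along a linear regular sequence is indeed valid in both directions, so your first paragraph is harmless but superfluous: the clean argument is simply to apply the cited theorem to $A$ itself, which is precisely the paper's proof.
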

\begin{proof}
See \cite[Theorem 5.2(1)]{Conca-Iyengar-Nguyen-Romer}.
\end{proof}

By \cite[Proposition 3.1.2]{ThesisLiena}, we have that the Hilbert series of $\A(X_{n,d}^{G})$ is of the form
\[\frac{h_nz^n + h_{n-1}z^{n-1} + \hdots + h_1z + 1}{(1-z)^{n+1}},\]
where $h_i$ coincides with the number of monomials $m = x_0^{a_0}\cdots x_n^{a_n} \in R^{\bG}$ of degree $id$ satisfying $a_0 < d, \hdots, a_n < d$. 
In particular, $h_1 = \codim(X_d) = \mu_d-(n+1)$ and $h_n$ is the number monomials $m$ in $\cB_1$ with $|\supp(m)| = n+1$.
Since $A(X_{n,d}^G)$ is a Cohen-Macaulay ring, the Castelnuovo-Mumford regularity $\reg(A(X_{n,d}^G))$ is the degree of the $h$-polynomial, $\sum_ih_iz^i$, plus one.
Moreover, by  (\cite[Theorem 3.3.5]{ThesisLiena}),  we have 
\[n \leq \reg(\A(X_{n,d}^{G})) \leq n+1,\]
with equality $\reg(\A(X_{n,d}^G)) = n+1$ if and only if there is at least one monomial $m \in R^{\bG}$ with $|\supp(m)| = n+1$, i.\,e.\ $h_n>0$. 

In the case of the surfaces $X_{2,d}^G$, we have $\reg(\A(X_{2,d}^{G})) \leq 3$ and a short $h$-vector $(1, h_1, h_2)$, where $h_2$ could be zero. From the description of the $h_i$ in terms of $\cB_1$, we always have $h_2 \leq h_1$. We distinguish three cases depending on the shape of $(1, h_1, h_2)$.

\begin{itemize}
\item[(A)] If $h_2=0$, then $\reg(\A(X_{2,d}^{G})) = 2$. This implies that $\I(X_{2,d}^{G})$ is generated by binomials of degree $2$ and $\A(X_{2,d}^{G})$ has a linear resolution. Hence, $\A(X_{2,d}^{G})$ is Koszul.

\item[(B)] If $h_2=1$, then $\reg(\A(X_{2,d}^{G})) = 3$ and $\A(X_{2,d}^{G})$ is a Gorenstein ring. From the symmetry of a minimal free resolution of $\A(X_{2,d}^G)$, it follows that $\A(X_{2,d}^{G})$ is generated by binomials of degree $2$ if and only if $X_{2,d}^G$ is not a cubic surface in $\PP^3$. 

\item[(C)] Otherwise $\reg(\A(X_{2,d}^{G})) = 3$ and $\A(X^G_{2,d})$ is a ring with $h_2 > 1$.
\end{itemize}

\begin{theorem}\label{thm:Koszul_surf} Let $G \subset \GL(3,\kk)$ be a finite abelian group of order $d$. The following are equivalent.
\begin{itemize}
    \item [(i)] $\A(X_{2,d}^G)$ is Koszul.
    \item [(ii)] $\cB_1$ contains at least one monomial $m$ with $|\supp(m)| = 2$.
    \item [(iii)] $\A(X_{2,d}^{G})$ is quadratic.
\end{itemize}
\end{theorem}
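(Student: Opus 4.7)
The three implications will be handled separately. The implication (i)$\Rightarrow$(iii) is immediate from the general fact, already recalled in the preliminaries, that Koszul algebras are quadratic, so nothing is to be proved.

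For (iii)$\Rightarrow$(ii) I would argue by contrapositive. Assume $\cB_1$ has no monomial of support $2$, so $\cB_1=\{x_0^d,x_1^d,x_2^d\}\cup\cC$ where every $m\in\cC$ has full support. The first step is to produce a binomial of degree $3$ in $\I(X_{2,d}^G)$ that has no $\I(X_{2,d}^G)_3$-sequence in the sense of Definition \ref{Definition: Ik-sequence}; by Proposition \ref{Proposition: Criterion generation} this witnesses a cubic minimal generator and contradicts quadraticness. The natural candidate is a relation
$$x_0^d\cdot x_1^d\cdot x_2^d \;=\; m_1\cdot m_2\cdot m_3,\qquad m_1,m_2,m_3\in\cC,$$
whose existence can be extracted from the fact that $R^{\bG^3}$ contains $x_0^dx_1^dx_2^d$ while the character-counting argument used in the proof of Proposition \ref{Proposition: degree binomial GT-surfaces} forces at least one full-support factorisation; if no such triple of full-support monomials exists, then already the hypothesis that $\cC$ is nonempty together with the absence of support-$2$ elements leads to an immediate contradiction via the character relations. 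I would then verify that no trivial quadratic step can be applied to $x_0^dx_1^dx_2^d$: any such step must pair two of the pure powers $x_i^d$, but $x_i^dx_j^d$ has support $2$ and equals $x_ix_j\cdot(x_ix_j)^{d-1}/\cdots$, and writing it as a product of two elements of $\cB_1$ requires (by the absence of support-$2$ monomials in $\cB_1$) that both factors be pure powers $x_i^d$ and $x_j^d$, leaving $x_i^dx_j^d$ unchanged. Consequently the cubic binomial admits no $\I(X_{2,d}^G)_3$-sequence.

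For (ii)$\Rightarrow$(i) I would split the analysis according to the three mutually exclusive possibilities for $h_2$ already isolated just before the statement. In \emph{case} (A), where $h_2=0$, $\reg(\A(X_{2,d}^G))=2$ and the minimal free resolution is linear, so $\A(X_{2,d}^G)$ is automatically Koszul. In \emph{case} (B), where $h_2=1$, the algebra is Gorenstein with $h$-vector $(1,h_1,1)$. Here the aim is to show that $X_{2,d}^G$ is \emph{not} a cubic surface in $\PP^3$: if it were, then $h_1=1$ and $\mu_d=4$, which by direct inspection of the character equations forces $\cB_1=\{x_0^d,x_1^d,x_2^d,m\}$ with $m$ of full support, contradicting the presence of a support-$2$ element. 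By the discussion in (B), this implies that $\I(X_{2,d}^G)$ is generated by quadrics, and Koszulness in this quadratic Gorenstein short-$h$-vector situation can then be invoked from the results of \cite{Conca-Iyengar-Nguyen-Romer} (or Proposition \ref{prop:Koszul_surf}, referenced in the introduction). In \emph{case} (C), where $h_2>1$, I apply Proposition \ref{Proposition: Katthan-Yanagawa}; since $\A(X_{2,d}^G)$ is a Cohen--Macaulay domain (Remark \ref{Remark: CM}) with short $h$-vector $(1,h_1,h_2)$, Koszulness follows as soon as $h_2<h_1$.

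The crux of the argument is therefore establishing the strict inequality $h_2<h_1$ in case (C) from the hypothesis that $\cB_1$ contains a support-$2$ monomial. The plan is to use the description
$$h_2=\mu_{2d}-3\mu_d+3,\qquad h_1=\mu_d-3,$$
obtained by inclusion–exclusion from the bijection $x_i^d\cdot(-)\colon \cB_1\to\{m\in R^{\bG}_{2d}:b_i\ge d\}$, together with the observation that $x_i^dx_j^d$ is the unique element in the intersection of two such sets. The required bound $4\mu_d-\mu_{2d}>6$ should follow by exhibiting, for each support-$2$ generator $x_i^ax_j^{d-a}\in\cB_1$, two elements of $\cB_1$ whose product fails to be counted in the $<d$-region of degree $2d$, thereby giving a non-surjective counting map from $R^{\bG}_{2d,<d}$ into $\cB_1\setminus\{x_0^d,x_1^d,x_2^d\}$. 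This counting step is where I expect the main technical obstacle to lie, and where the explicit description of $\cB_1$ in terms of the character lattice of $G$ provided in \cite{ThesisLiena} will be indispensable.
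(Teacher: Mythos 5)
Your overall architecture is the same as the paper's: (i) $\Rightarrow$ (iii) is formal, $\neg$(ii) $\Rightarrow$ $\neg$(iii) is witnessed by a cubic binomial coming from a relation $x_0^dx_1^dx_2^d=m_1m_2m_3$ with full-support $m_i$ and no $\I(X_{2,d}^G)_3$-sequence, and (ii) $\Rightarrow$ (i) is to come from Proposition \ref{Proposition: Katthan-Yanagawa}. The genuine gap is that the pivotal quantitative step is not proved. In your case (C) everything hinges on $h_2<h_1$, and you explicitly leave this as an uncompleted ``counting step'', proposing the identity $h_2=\mu_{2d}-3\mu_d+3$ and a bound $\mu_{2d}<4\mu_d-6$ obtained from a map you never construct. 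The paper gets this for free from the fact, stated just before the theorem (via \cite[Proposition 3.1.2]{ThesisLiena}), that $h_2$ equals the number of monomials of $\cB_1$ with full support: the bijection $m\mapsto (x_0x_1x_2)^d/m$ matches full-support elements of $\cB_1$ with degree-$2d$ invariants having all exponents $<d$. Hence $h_1-h_2$ is exactly the number of support-$2$ monomials in $\cB_1$, so hypothesis (ii) is literally equivalent to $h_2<h_1$, and cases (B) and (C) are then handled uniformly by Proposition \ref{Proposition: Katthan-Yanagawa}. Your inclusion--exclusion route is arithmetically consistent and could in principle be completed, but as written the central implication (ii) $\Rightarrow$ (i) rests on an unproven inequality that you yourself flag as the main obstacle.

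Two further points. In $\neg$(ii) $\Rightarrow$ $\neg$(iii) you treat ``$\cC$ nonempty'' as a hypothesis, but it must be established: if $\cC=\emptyset$ then $\I(X_{2,d}^G)=0$ and the algebra is trivially quadratic, so the contrapositive fails unless you show $\cB_1$ cannot consist of the three pure powers alone. This is true (absence of support-$2$ invariants forces the pairwise differences of the weights of $x_0,x_1,x_2$ to have order $d$, which in turn produces a full-support invariant of degree $d$), but it needs an argument; the nontrivial factorization then follows cleanly by picking $m_1\in\cC$ and writing the degree-$2d$ invariant $(x_0x_1x_2)^d/m_1$, whose exponents are all $<d$, as a product of two elements of $\cB_1$ using \cite[Theorem 2.2.11]{ThesisLiena}, both factors necessarily of full support — your appeal to $R^{\bG^3}$ and ``character counting'' does not quite deliver this. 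Finally, in case (B) the alternative citation of Proposition \ref{prop:Koszul_surf} is circular, since that proposition is deduced from this theorem; the quadraticity detour there is also unnecessary, because once the cubic surface is excluded you have $h_2=1<h_1$ and Proposition \ref{Proposition: Katthan-Yanagawa} applies directly.
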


\begin{proof} For surfaces $X_{2,d}^G$ of type (A) the result is true. Thus, we can assume that $\reg(\A(X_{2,d}^G)) = 3$ and, so, $\A(X_{2,d}^G)$ has  $h$-vector $(1,h_1,h_2\neq 0)$. First, we prove that if $h_2 = h_1$, then $\I(X_{2,d}^{G})$ cannot be only generated  by binomials of degree $2$. Indeed, if $h_2 = h_1$, then any monomial $m \in \cB_1 \setminus \{x_0^{d},x_1^{d},x_2^{d}\}$ has $|\supp(m)| = 3$. Thus, we have at least one non-trivial relation $x_0^dx_1^dx_2^d = m_1m_2m_3$ with $|\supp(m_i)| = 3$. We consider the $3$-binomial $w^{\alpha} = \rho^{-1}(x_0^d)\rho^{-1}(x_1^d)\rho^{-1}(x_2^d) - \rho^{-1}(m_1)\rho^{-1}(m_2)\rho^{-1}(m_3)$. It is straightforward that $w^{\alpha}$ does not admit an $\I(X_{2,d}^{G})_3$-sequence. By Proposition \ref{Proposition: Criterion generation}, $\I(X_{2,d}^{G})$ cannot be only generated by quadrics. 

Thus, we obtain that $\A(X_{2,d}^{G})$ is Koszul if and only if $h_2 < h_1$; if and only if $\I(X_{2,d}^{G})$ is generated by binomials of degree $2$. Since $h_1 = \codim(\A(X_{2,d}^{G})) = |\cB_1|-3$, and $h_1-h_2$ is the number of monomials $m \in \cB_1 \setminus \{x_0^d,x_1^d,x_2^d\}$ with $|\supp(m)| = 2$, the result follows. \end{proof}

As an application of the above result, we get the following.

\begin{proposition}\label{prop:Koszul_surf} If $G = \Gamma_1 \oplus \cdots \oplus \Gamma_s \subset \GL(3,\kk)$ is a finite abelian non cyclic group of order $d = d_1\cdots d_s$, then the algebra $\A(X_{2,d}^{G})$ is Koszul.
\end{proposition}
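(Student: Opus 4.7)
The strategy is to reduce the Koszulness claim to the easily verifiable combinatorial condition supplied by Theorem \ref{thm:Koszul_surf}: it is enough to exhibit a single monomial in $\cB_1$ whose support has size exactly two. The input is that $G$ has at least two nontrivial invariant factors, so in the decomposition $G = \Gamma_1 \oplus \cdots \oplus \Gamma_s$ with $d_i \mid d_{i+1}$ we have $s \geq 2$ and $d_1 \geq 2$. The main obstacle is translating the non-cyclic hypothesis into an explicit combinatorial inequality that forces the existence of such a binomial invariant.

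First I would fix any pair of distinct indices $j \neq k$ in $\{0,1,2\}$ and analyze when a degree-$d$ monomial of the shape $x_j^a x_k^{d-a}$ with $1 \leq a \leq d-1$ lies in $R^G$. Since $G$ acts diagonally via the generators $M_{d_i;\alpha_{i0},\alpha_{i1},\alpha_{i2}}$ and $d_i \mid d$, invariance under the $i$-th generator reduces to the congruence
\[
a(\alpha_{ij}-\alpha_{ik}) \equiv 0 \pmod{d_i}.
\]
Setting
\[
L_{jk} := \operatorname{lcm}_{1 \leq i \leq s}\!\left(\frac{d_i}{\gcd(\alpha_{ij}-\alpha_{ik},\,d_i)}\right),
\]
the full set of congruences is equivalent to $L_{jk} \mid a$. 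Thus the problem is reduced to showing $L_{jk} \leq d-1$ for some pair $(j,k)$, because then $a = L_{jk}$ produces a monomial $x_j^{L_{jk}} x_k^{d-L_{jk}} \in \cB_1$ with support of size exactly two.

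The crucial step is the bound $L_{jk} \leq d_s$: each term in the lcm defining $L_{jk}$ divides the corresponding $d_i$, and because of the divisibility chain $d_1 \mid d_2 \mid \cdots \mid d_s$, one has $\operatorname{lcm}(d_1,\ldots,d_s) = d_s$. Combined with the non-cyclicity hypothesis $s \geq 2$, $d_1 \geq 2$, this gives
\[
L_{jk} \leq d_s = \frac{d}{d_1 \cdots d_{s-1}} \leq \frac{d}{d_1} \leq \frac{d}{2} < d,
\]
so indeed $1 \leq L_{jk} \leq d-1$. Consequently $x_j^{L_{jk}} x_k^{d-L_{jk}} \in \cB_1$ has support of cardinality two, and Theorem \ref{thm:Koszul_surf} immediately yields that $\A(X_{2,d}^G)$ is Koszul. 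Note that this argument actually produces such an invariant binomial for every pair of variables, which is stronger than what is needed.
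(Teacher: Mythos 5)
Your argument is correct and takes essentially the same route as the paper: both reduce the statement to Theorem \ref{thm:Koszul_surf} by exhibiting a degree-$d$ invariant monomial supported in exactly two variables, using the divisibility chain $d_1 \mid \cdots \mid d_s$ together with $d_s < d$. The paper simply takes the exponent $a = d_s$ outright (the monomial $x_0^{d_s}x_1^{d-d_s}$ is invariant because $d_i \mid d_s$ for every $i$), which is a multiple of your $L_{jk}$ and so bypasses the lcm computation.
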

\begin{proof}
The monomial $x_0^{d_s}x_1^{d-d_s}$ is an invariant of $G$, so $\A(X_{2,d}^{G})$ is Koszul by Theorem \ref{thm:Koszul_surf}.
\end{proof}

Given a finite cyclic group $G = \langle M_{d;\alpha_0,\alpha_1,\alpha_2} \rangle \subset \GL(3,\kk)$, for the purpose of studying the Koszulness of $\A(X_{2,d}^{G})$, we can assume that $\alpha_0 = 0$ and $\alpha_1 \leq \alpha_2$.

\begin{proposition}\label{prop:G_koszul}
Let $G = \langle M_{d;0,\alpha_1,\alpha_2} \rangle \subset \GL(3,\kk)$ be a finite cyclic group of order $d$. The algebra $\A(X_{2,d}^{G})$ is quadratic, and hence Koszul,  precisely when
\begin{itemize}
\item[(i)] $\alpha_1 = \alpha_2$, or
\item[(ii)] $\alpha_1 < \alpha_2$ and the inequality (\ref{eq:ineq_gcd}) is strict. 
\end{itemize}
\end{proposition}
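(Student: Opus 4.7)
The plan is to derive Proposition~\ref{prop:G_koszul} from Theorem~\ref{thm:Koszul_surf} together with Proposition~\ref{Proposition: degree binomial GT-surfaces}. Theorem~\ref{thm:Koszul_surf} reduces the statement to showing that $\cB_1$ contains a monomial of support size $2$ precisely when (i) or (ii) holds, so the remaining work is purely arithmetic. For case (i), write $\alpha_1=\alpha_2=\alpha$; since $G$ has order $d$, the condition $\GCD(\alpha_1,\alpha_2,d)=1$ forces $\GCD(\alpha,d)=1$. Then, for any $0<b<d$, the monomial $x_1^b x_2^{d-b}$ is invariant under $G$ because $b\alpha+(d-b)\alpha=d\alpha\equiv 0\pmod d$, so $\cB_1$ contains $d-1$ monomials of support size $2$, and Theorem~\ref{thm:Koszul_surf} yields quadraticity (and Koszulness). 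For case (ii), the hypothesis $0<\alpha_1<\alpha_2<d$ that underlies the definition of (\ref{eq:ineq_gcd}) is exactly the setting of Proposition~\ref{Proposition: degree binomial GT-surfaces}, which directly gives both implications: $\A(X_{2,d}^G)$ is quadratic if and only if (\ref{eq:ineq_gcd}) is strict. Combined with Theorem~\ref{thm:Koszul_surf}, this settles (ii) in both directions.

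The only boundary configuration not yet covered is $\alpha_1=0<\alpha_2$, which lies outside the hypotheses of Proposition~\ref{Proposition: degree binomial GT-surfaces}. Here the order condition forces $\GCD(\alpha_2,d)=1$, the group acts trivially on $x_0$ and $x_1$, and every monomial $x_0^a x_1^{d-a}$ with $0<a<d$ lies in $\cB_1$; Theorem~\ref{thm:Koszul_surf} again yields quadraticity. This instance fits into case~(ii) under the natural convention $\GCD(0,d)=d$, which makes (\ref{eq:ineq_gcd}) trivially strict.

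The main obstacle in writing out the argument is really just the bookkeeping: verifying that the normalization $\alpha_0=0$, $\alpha_1\leq\alpha_2$, combined with the order condition $\GCD(\alpha_1,\alpha_2,d)=1$, partitions all admissible triples into the three configurations treated above, and carefully matching each of them with the appropriate case of the dichotomy (i) or (ii). Beyond this enumeration, the substantive algebraic content is already carried by Theorem~\ref{thm:Koszul_surf} and Proposition~\ref{Proposition: degree binomial GT-surfaces}, and no further techniques are required.
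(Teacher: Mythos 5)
Your proposal is correct and follows essentially the same route as the paper: case (i) is settled by exhibiting support-two invariant monomials and invoking Theorem \ref{thm:Koszul_surf} (the paper equivalently reduces to $\langle M_{d;0,1,1}\rangle$ and observes it is of type (A)), while case (ii) is exactly the paper's combination of Proposition \ref{Proposition: degree binomial GT-surfaces} with Theorem \ref{thm:Koszul_surf}. Your explicit treatment of the boundary configuration $\alpha_1=0<\alpha_2$, which the paper's statement and proof leave implicit, is a reasonable extra precision rather than a deviation in method.
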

\begin{proof}
(i) It is enough to consider the surface $X_{2,d}^{G}$ where $G = \langle M_{d;0,1,1} \rangle \subset \GL(3,\kk)$. This is of type (A), so $\A(X_{2,d}^{G})$ is Koszul.

\vspace{0.15cm}
\noindent (ii) It follows from Proposition \ref{Proposition: degree binomial GT-surfaces} and Theorem \ref{thm:Koszul_surf}.
\end{proof}

The fact that $\A(X_{2,d}^G)$ being Koszul is equivalent to $\I(X_{2,d}^G)$ being generated by quadrics, leads us to ask the following question.

\begin{question}\label{Q2} 
 Let $G \subset \GL(3,\kk)$ be a finite abelian group of order $d$. Is it true that 
$\A(X_{2,d}^G)$ is quadratic if and only if $\I(X_{2,d}^G)$  has a quadratic Gr\"obner basis?
\end{question}

 Next, we consider families of finite abelian groups $G \subset \GL(3,\kk)$ for which we are able to find a quadratic Gr\"obner basis for the ideal $\I(X^G_{n,d})$. 

\begin{proposition}\label{Proposition: G basis 0,1,k}
 Let $G = \langle M_{d;0,1,k} \rangle \subset \GL(3,\kk)$ and $d = tk(k-1)$ for some integer $t \geq 1$.
 Then, $\I(X_{2,d}^G)$ has a quadratic Gr\"obner basis. 
\end{proposition}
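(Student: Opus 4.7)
The plan is to make the invariant set $\cB_1$ explicit, exhibit a candidate set of quadratic binomial generators of $\I(X_{2,d}^G)$, and verify that it forms a Gr\"obner basis under a suitable term order. Setting $N = d/k = t(k-1)$, a degree-$d$ monomial $x_0^{a_0}x_1^{a_1}x_2^{a_2}$ is $G$-invariant iff $a_1 + ka_2 \equiv 0 \pmod{d}$. Parametrizing by $j = a_2$, the set $\cB_1$ decomposes naturally into three classes: the singleton $\{x_1^d\}$; the ``pure'' chain $P_i := x_0^{(k-i)N}x_2^{iN}$, $i = 0, \ldots, k$, arising from $j = iN$; and ``mixed'' monomials $m_{q,r} := x_0^{r(k-1)-qN}x_1^{d-rk}x_2^{qN+r}$ indexed by pairs $(q,r)$ with $0 \le q \le k-2$ and $\max(qt,1) \le r \le N-1$. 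The hypothesis $d = tk(k-1)$ is precisely what ensures both that the full chain $P_0, \ldots, P_k$ sits inside $\cB_1$ and that every admissible $(q,r)$ yields nonnegative exponents.

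I would next choose a lex order on $S = \kk[w_m : m \in \cB_1]$ in which $w_{x_1^d}$ ranks highest, then the $w_{P_i}$'s in order of $i$, then the $w_{m_{q,r}}$'s by the pair $(q,r)$ in some linear extension, and take the candidate Gr\"obner basis $\mathcal{G}$ to be the finite collection of natural quadrics in $\I(X_{2,d}^G)$: the Hankel relations $w_{P_i}w_{P_j} - w_{P_{i-1}}w_{P_{j+1}}$ coming from the rational normal curve spanned by $P_0, \ldots, P_k$, which are well known to form a quadratic Gr\"obner basis; the ``absorption'' relations $w_{m_{q,r}}w_{m_{q',N-r}} - w_{x_1^d}w_{P_{q+q'+1}}$, obtained when the $x_1$-exponents combine to $d$; and the ``exchange'' relations $w_{m_{q,r}}w_{m_{q',r'}} - w_{\tilde m}w_{\tilde m'}$ arising whenever the product $m_{q,r}m_{q',r'}$ admits an alternative factorization in $\cB_1$, either mixed-mixed or mixed-pure, depending on whether $r+r'$ is less than, equal to, or greater than $N$.

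Finally I would verify the Gr\"obner basis property via Buchberger's criterion. The idea is to introduce a well-founded invariant on pairs $(m,m') \in \cB_1 \times \cB_1$ — for example the lexicographic vector $\bigl(a_2(m)+a_2(m'),\, a_0(m)+a_0(m')\bigr)$ — and show that each basic rewrite in $\mathcal{G}$ strictly decreases this invariant, so that every product $m m' \in \kk[\cB_1]_2$ has a unique normal form, which is equivalent to $\mathcal{G}$ being a Gr\"obner basis. The divisibility $d = tk(k-1)$ enters precisely at this stage: it guarantees that every mixed-mixed product falls into one of the three rewrite cases (releasing $x_1^d$ if $r+r' = N$, some $P_i$ if $r+r' > N$, or permuting mixed factors if $r+r' < N$), so any $S$-pair reduces either to a strictly smaller instance or to the classical Hankel case. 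I expect the hardest step to be the mixed-mixed $S$-pairs, where several rewrites from the absorption and exchange families can interact; these will require a careful case analysis on residues modulo $N$, in the spirit of the combinatorial arguments of \cite{ThesisLiena, CMM-R}.
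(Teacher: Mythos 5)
Your description of $\cB_1$ (the singleton $x_1^d$, the chain $P_i=x_0^{(k-i)N}x_2^{iN}$, and the mixed monomials $m_{q,r}$) and your choice of a lex-type order are correct and are essentially a reparametrization of the paper's setup, which indexes $\cB_1$ by pairs $(r,c)$ with $b+kc=rd$ and orders the variables lexicographically in $(r,c)$. The problem is the verification step, which is where all the content lies. Your proposed well-founded invariant $\bigl(a_2(m)+a_2(m'),\,a_0(m)+a_0(m')\bigr)$ is \emph{constant} under every rewrite: any quadratic relation replaces a factorization $mm'$ by another factorization $\tilde m\tilde m'$ of the same degree-$2d$ monomial of $R$, so the total $x_2$- and $x_0$-exponents of the pair never change. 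Hence nothing strictly decreases and the argument for unique normal forms collapses. What does decrease under reduction is the monomial of $S$ in your term order, but that only gives termination; uniqueness of normal forms still requires the local confluence (S-pair) analysis that you defer to ``a careful case analysis'' --- and that case analysis is precisely the substance of the paper's proof (its Claim on which degree-two products $w\widetilde w$ are leading terms of quadrics in the ideal, followed by the case analysis on the triples $(r_i,r_j,r_\ell)$ of a nontrivial $3$-binomial).

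There is also a logical gap independent of the invariant: ``every product $mm'\in\kk[\cB_1]_2$ has a unique normal form'' is \emph{not} equivalent to $\mathcal{G}$ being a Gr\"obner basis of $\I(X_{2,d}^G)$. That statement only controls the degree-two piece of the initial ideal; the Gr\"obner property requires that every nontrivial binomial of the toric ideal, in every degree, have its leading term divisible by the leading term of one of your quadrics. The paper reduces this to nontrivial binomials of degree $3$ using the fact that $\I(X_{2,d}^G)$ is generated in degrees at most $3$ (\cite[Theorem 3.2.6]{ThesisLiena}), an input you never invoke, and then settles exactly those degree-three fibers. Relatedly, Buchberger's criterion applied to your explicit list $\mathcal{G}$ (Hankel, absorption, exchange relations) would only show that $\mathcal{G}$ is a Gr\"obner basis of the ideal it generates; you still must argue that this ideal is all of $\I(X_{2,d}^G)$, which again comes back to handling the degree-three binomials. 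So the skeleton of your approach is compatible with the paper's, but as proposed the proof does not go through: you need a genuinely decreasing quantity (or simply the term order plus a confluence argument), the degree-$\le 3$ generation input, and the actual S-pair / $3$-binomial case analysis, none of which is carried out.
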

\begin{proof} We denote $\cB_1 = \{m_0,\hdots, m_{\mu_{d}}\}$ and we take $S = \kk[w_0,\hdots,w_{\mu_d}]$. We want to prove that there is a term order $(S,\preceq)$ such that $\I(X_{2,d}^G)$ admits a Gr\"obner basis of quadrics.  To this end, we observe that a monomial $m = x_0^ax_1^bx_{2}^c$ belongs to $\cB_1$ if and only if there exists an integer $r$ with $0 \leq r \leq k$ such that
\[(*)_{r}: \left\{\begin{array}{lllllll}
a &+& b &+& c &=& d\\
    & & b &+& kc&=&rd. 
\end{array}\right.\]
The system $(*)_{0}$ has only one solution $(d,0,0)$, and for $r$ with $1 \leq r \leq k$, we have solutions
\begin{equation}\label{Eq: solutions}
\{((k-1)c - (r-1)d , \ rd-kc, \ c)\mid c = (r-1)tk,  \  \hdots \ , rt(k-1)\}
\end{equation}
determined by $r$ and $c$. 
We set $I_{0}:=\{0\}$ and $I_r:= \{(r-1)tk, \hdots, rt(k-1)\}$ for $1 \leq r \leq k$. Thus, $\cB_1$ is uniquely determined by the set $\W = \{(r,c) \mid  0 \leq r \leq k \; \text{and} \; c \in I_{r}\}$. We let $m_{(r,c)}$ denote the monomial in $\cB_1$ determined by $(r,c)$, and we write $(r_i,c_i)$ for the pair in $\W$ corresponding to the monomial $m_i \in \cB_1$. With this notation, a binomial $w^{\alpha} = w_{i_1}\!\cdots w_{i_{s}} - w_{j_1}\!\cdots w_{j_{s}}$ belongs to $\I(X_{2,d}^G)$ if and only if we have equalities
\begin{equation}\label{Eq:equalities_GB_01k}
\begin{array}{lll}
r_{i_1} + \dots + r_{i_s} & = & r_{j_1} + \dots + r_{j_s}\\
c_{i_1} + \dots + c_{i_{s}} & = & c_{j_1} + \dots + c_{j_{s}}.
\end{array}
\end{equation}
Using the description (\ref{Eq: solutions}), we sort the monomials in $\cB_1$ as follows. Given $m_{i},m_{j} \in \cB_1$, we say that $m_i < m_j$ if and only if $r_i < r_j$ or,  $r_i = r_j$ and $c_i < c_j$. Now in $S$, we say that $w_i \preceq w_j$ if and only if $m_i \leq m_j$ and we take $(S,\preceq)$ as Lex with $\{w_1,\hdots,w_{\mu_d}\}$ ordered by $\preceq$. Our goal is to prove that $\I(X_{2,d}^G)$ has a quadratic Gr\"obner basis with respect to $\preceq$. Let $w^{\alpha} \in \I(X_{2,d}^G)$ be a non-trivial $3$-binomial. To achieve our goal, it suffices to show that there is a $2$-binomial $w^{\beta} \in \I(X_{2,d}^G)$ such that $\LT(w^{\beta})$ divides $\LT(w^{\alpha})$. 

We put $w^{\alpha} = w_{i}w_jw_{\ell} - w_{t}w_{u}w_{v}$ with $\LT(w^{\alpha}) = w_iw_jw_\ell$ and we assume that $w_i \preceq w_j \preceq w_{\ell}$ and $w_t \preceq w_u \preceq w_v$. The hypothesis $w^{\alpha}$ non-trivial means that $\{w_i,w_j,w_\ell\} \cap \{w_{t},w_{u},w_{v}\} = \emptyset$, which implies $\LT(w^{\alpha}) = w_iw_jw_\ell$ if and only if $w_v \prec w_{\ell}$, i.\,e.\ $r_{v} < r_{\ell}$ or $r_v = r_\ell$  and $c_v < c_{\ell}.$ We will strongly use the following facts 
\begin{equation}\label{Eq: facts} \begin{array}{c}
|I_0| = 1, \quad |I_{r}| = t(k-r) + 1, \; 1 \leq r \leq k, \\
t(k-1)+1 = |I_{1}| > |I_{2}| > \cdots > |I_{k-1}| = t + 1 > |I_{k}| = 1,\\
\min I_r - \max I_{r-1} = (r-1)tk - (r-1)t(k-1) = (r-1)t.
\end{array}
\end{equation}

\noindent {\bf Claim.} Let $w\widetilde{w} \in S$ be a monomial of degree $2$. Assume 
\begin{enumerate}[(A)]
    \item $r = 0$ and $\widetilde{r} \geq  2$, or
    \item $r > 0$ and one of the following conditions holds.
    \begin{enumerate}[(i)]
        \item $r \leq \widetilde{r}-1$, $c < \max I_r=rt(k-1)$ and $\widetilde{c} > \min I_{\widetilde r}= (\widetilde{r}-1)tk$,
        \item $r \leq \widetilde{r}-2$,
        \item $r = \widetilde{r}$ and $c \leq \widetilde{c} - 2$.
    \end{enumerate}
\end{enumerate}
Then, there is a $2$-binomial $w^{\beta} \in \I(X_{2,d}^G)$ with $\LT(w^{\beta}) = w\tilde{w}$.

Assume that {\bf Claim} is true. We want to prove that there is a $2$-binomial $w^{\beta} \in \I(X_{2,d}^G)$ such that $\LT(w^{\beta})$ divides $w_{i}w_{j}w_{\ell}$. We distinguish several cases depending on the values of $(r_i,r_j,r_l)$. 

\vspace{0.15cm}
\noindent \underline{$r_i = 0$.} If $r_j \ge 2$ or $r_\ell \ge 2$ we can apply {\bf Claim}(A) with $w_iw_j$ or $w_iw_\ell$. 
If $r_j = 0$ and $r_\ell=0$ or $1$ the equalities (\ref{Eq:equalities_GB_01k}) have only one solution, so $w^\alpha$ would be trivial. If $r_j=r_\ell = 1$ the equalities (\ref{Eq:equalities_GB_01k}) imply $(r_t,r_u,r_v) = (0,1,1)$ and $c_i=c_t=0$, so $w^{\alpha}$ would again be trivial. 

\vspace{0.15cm}
\noindent \underline{$r_i > 0$.} {\bf Claim}(B) with $w_{i}w_{j}$ or $w_{i}w_{\ell}$ applies unless we are in one of the following three subcases.

\vspace{0.15cm}
\noindent \underline{$r_i = r_j = r_\ell$, $c_j = c_i + \delta_j$ and $c_\ell = c_i + \delta_{\ell}$ with $\delta_{j},\delta_{\ell} \in \{0,1\}$.} Then, (\ref{Eq:equalities_GB_01k}) and $\LT(w^{\alpha}) = w_iw_jw_{\ell}$ imply $r_t = r_u = r_v = r_i$ and $c_v < c_\ell \leq c_i + 1$. In particular, (\ref{Eq:equalities_GB_01k}) gives us $3c_i + \delta_{j} + \delta_{\ell} =  c_t + c_u + c_v$. Since $c_v < c_\ell$, we have $2c_i + \delta_{j} < c_t + c_u$ and, hence, $c_t > c_i$ or $c_u > c_i$ which is a contradiction because then we get $w_{v} \prec w_{t}$ or $w_{v} \prec w_{u}$. 

\vspace{0.15cm}
\noindent \underline{$r_i = r_j < r_\ell = r_i+1$ with $c_j = c_i + \delta_{j}$, $\delta_{j} \in \{0,1\}$, and $c_i = \max I_{r_i}$ or $c_\ell = \min I_{r_\ell}$.} As before, (\ref{Eq:equalities_GB_01k}) and the inequality $r_v \le r_\ell$ imply $(r_t,r_u,r_v) = (r_i,r_j,r_\ell)$. Hence we have $c_v < c_\ell$, and in particular $c_\ell > \min I_{r_\ell}$. Then $c_i = \max I_{r_i}$ and therefore $c_j = c_i$. Now by (\ref{Eq:equalities_GB_01k}) we obtain $2c_i < c_t + c_u$ which gives us $c_t > c_i$ or $c_u > c_i$. But this is a contradiction since $c_i = \max I_{r_{i}}$. 

\vspace{0.15cm}
\noindent \underline{$r_i < r_j = r_\ell = r_{i}+1$ with $c_\ell = c_j + \delta_{\ell}$, $\delta_{\ell} \in \{0,1\}$, and $c_i = \max I_{r_i}$ or $c_\ell = \min I_{r_\ell}$.} By (\ref{Eq:equalities_GB_01k}) we have $r_{t}+r_{u}+r_{v} = 3r_i + 2$ with $r_u \leq r_u \leq r_v \leq r_{i}+1$ which implies $(r_t,r_u,r_v) = (r_i,r_i+1,r_{i}+1)$. Since $w_{v} \prec w_{l}$, we have $c_v < c_\ell$ and in particular $c_\ell > \min I_{r_\ell}$. Hence $c_i = \max I_{r_i}$, and as $r_t=r_i$ we have $c_t \le c_i$. Now by (\ref{Eq:equalities_GB_01k}) we obtain $c_j < c_u$ which implies $c_\ell \le c_u$. But then we get $w_{l} \preceq w_{u} \preceq w_{v}$ which contradicts $w_{v} \prec w_{l}$.

To finish, it only remains to prove the {\bf Claim}. Let us start with (A) and assume $r = 0$ and $\tilde{r} \geq 2$. We write $\widetilde{c} = (\tilde{r}-1)tk + \delta$, with $\delta \leq t(k-\widetilde{r})$ so that $\widetilde{c} \in I_{\widetilde{r}}$. We define   
\[(r',c') =(1,(\widetilde{r}-1)t +\delta), \quad  \text{and} \quad (r'',c'')= (\widetilde{r}-1, (\tilde{r}-1)t(k-1)).\] 
Since $\delta \leq t(k-\widetilde{r})$ and $(\tilde{r}-1)t+\delta \leq t(k-1)$ by (\ref{Eq: facts}), we obtain $(r',c'),(r'',c'') \in \W$ and $m_{(r,c)}m_{(\widetilde{r},\widetilde{c})} = m_{(r',c')}m_{(r'',c'')}$. This induces a $2$-binomial $w^{\beta}$ with $\LT(w^{\beta}) = w\widetilde{w}$. 

We move on to (B) and assume $r > 0$. To prove (i) and (iii), we define $(r',c') = (r,c+1)$ and $(r'',c'') = (\tilde{r},\widetilde{c}-1)$. This defines a 2-binomial with leading term $w\widetilde w$ in the same way we saw above. The same construction can also be applied to prove (ii) except in the two subcases
\begin{enumerate}[(a)]
\item $(c,\widetilde{c}) = (rt(k-1), (\widetilde{r}-1)tk + \delta)$, 
\item $(c,\widetilde{c}) = (rt(k-1)-\delta, (\widetilde{r}-1)tk)$. 
\end{enumerate}
Both are analogues and we deal only with (a). We define
\[(r',c') = (r+1, (r+1)t(k-1)) \quad \text{and} \quad  (r'',c'') = (\widetilde{r}-1, (\widetilde{r}-2)tk+\delta+t).\]
By (\ref{Eq: facts}), $\delta \leq |I_{\tilde{r}}| = t(k-\tilde{r})$ and $|I_{\widetilde{r}-1}| = t(k - \widetilde{r}+1)$, so the pairs $(r',c'), (r'',c'') \in \W$. In all cases, we obtain $m_{(r,c)}m_{(\widetilde{r},\widetilde{c})} = m_{(r',c')}m_{(r'',c'')}$, which induces a $2$-binomial $w^{\beta}$ with $\LT(w^{\beta}) = w\widetilde{w}$. 
\end{proof}

\begin{remark}
 We have seen in  Proposition \ref{prop:G_koszul} that the algebra $R^{\overline{G}}$, with $G=\langle M_{d;0,1,k} \rangle \subset \GL(3,\kk)$, is Koszul if and only if $\GCD(k,d)\GCD(k-1,d)>1$. The term order considered in the proof of Proposition \ref{Proposition: G basis 0,1,k} above does not always provide a quadratic Gröbner basis when the condition $d=tk(k-1)$ is dropped. However, there are many other possible term orders.
We have verified it for $d \le 25$, by computation in Macaulay2 \cite{Macaulay2},  that each $\I(X^G_{n,d})$ has a quadratic Gröbner basis w.\,r.\,t.\ some term order.
\end{remark}

\begin{proposition}\label{prop:G-quadratic_group} Let $G \subset \GL(3,\kk)$ be a cyclic group of order $d$. In the cases
\begin{itemize}
    \item [(i)] $G=\langle M_{d;0,k,d-k} \rangle$ with $d$ even and $\GCD(d,k) = 1$,
    \item [(ii)] $G = \langle M_{d;0,\alpha_1, \alpha_2} \rangle$ with $\GCD(d,\alpha_1,\alpha_2) = \delta > 1$,
\end{itemize}
the ideal $\I(X^G_{2,d})$ has a quadratic Gröbner basis.
\end{proposition}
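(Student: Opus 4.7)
The plan mirrors the proof strategy of Proposition \ref{Proposition: G basis 0,1,k}: in each case, I would explicitly describe the invariants $\cB_1$, fix a lex term order compatible with a natural parametrization, and then apply Proposition \ref{Proposition: Criterion generation} to verify that every non-trivial cubic binomial in $\I(X_{2,d}^G)$ has leading monomial divisible by the leading monomial of some quadratic binomial in the ideal. Both cases already satisfy Proposition \ref{prop:G_koszul}(ii)---in (i) one computes $\lambda = d-1$, $\mu = 1-k$, so the product in (\ref{eq:ineq_gcd}) equals $\GCD(d-2,d) \ge 2$ when $d$ is even; in (ii) the first factor $\GCD(\alpha_1,d) \ge \delta > 1$---so $\I(X_{2,d}^G)$ is already generated by quadrics and the Gr\"obner-basis reduction alone suffices.

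For (i), since $\GCD(k,d)=1$, the congruence $kb + (d-k)c \equiv 0 \pmod d$ reduces to $b \equiv c \pmod d$, which combined with $a+b+c=d$ forces either $b=c$ (giving $y_j := x_0^{d-2j}(x_1x_2)^j$ for $j = 0, \ldots, d/2$) or $(b,c) \in \{(d,0),(0,d)\}$ (giving $z_1 := x_1^d$, $z_2 := x_2^d$). A direct kernel computation identifies $\I(X_{2,d}^G)$ with $I_{\mathrm{RNC}} + (z_1 z_2 - y_{d/2}^2)$, where $I_{\mathrm{RNC}}$ is the ideal of the degree-$(d/2)$ rational normal curve in the $y_j$'s. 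Under the lex order with $y_0 \prec \cdots \prec y_{d/2} \prec z_1 \prec z_2$, the classical $2 \times 2$ catalecticant minors form a quadratic Gr\"obner basis of $I_{\mathrm{RNC}}$ with leading monomials among the $y_j$'s; as $\LT(z_1 z_2 - y_{d/2}^2) = z_1 z_2$ is coprime to every other leading monomial, Buchberger's criterion yields a quadratic Gr\"obner basis of the whole ideal.

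For (ii), write $d = \delta d'$, $\alpha_i = \delta \beta_i$ with $\GCD(\beta_1,\beta_2,d') = 1$, and set $G' := \langle M_{d';0,\beta_1,\beta_2}\rangle$. The $G$-invariants of degree $d$ coincide with the $G'$-invariants of degree $d = \delta d'$, so $\kk[\Omega_{2,d}] \cong (R^{\bG'})^{(\delta)}$ is the $\delta$-th Veronese of $R^{\bG'}$ in its natural grading (in which generators sit in degree $d'$). Because $\delta \ge 2$, every element of $\cB_1$ admits a unique factorization as an unordered $\delta$-tuple of degree-$d'$ generators of $R^{\bG'}$. The plan is to fix a total order on the set $\cB_1'$ of degree-$d'$ $G'$-invariants, label each element of $\cB_1$ by its sorted $\delta$-tuple, and choose the induced lex order on $S$ under which "unsorting" strictly increases the order; along the lines of the sorted-monomial technique, one then shows that every non-trivial cubic binomial in $\I(X_{2,d}^G)$ reduces modulo the quadratic "adjacent swap" relations. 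The main obstacle lies precisely here: although the sorted-Gr\"obner-basis machinery is classical for $\delta$-th Veroneses of polynomial rings, the intrinsic binomial relations of $R^{\bG'}$ interact with the Veronese sorting relations, so the reduction of cubics must be verified through a case analysis analogous to---but more delicate than---the one carried out in the proof of Proposition \ref{Proposition: G basis 0,1,k}.
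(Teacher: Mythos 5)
Your treatment of case (i) is correct, and it is a genuinely different route from the paper's. The paper normalizes $k=1$, observes that (adding a multiple of $(1,1,1)$ to the weights and permuting variables) $R^{\bG}$ for $M_{d;0,1,d-1}$ is isomorphic to the one for $M_{d;0,1,2}$, and then quotes Proposition \ref{Proposition: G basis 0,1,k} with $k=2$, $d=2t$. You instead compute $\cB_1$ explicitly, identify $\I(X_{2,d}^G)$ as the Hankel ideal of a rational normal curve of degree $d/2$ plus the single quadric $z_1z_2-y_{d/2}^2$, and conclude with the coprime-leading-term form of Buchberger's criterion. This is sound (the identification of the ideal does follow from the quadraticity you extract from Proposition \ref{prop:G_koszul}(ii), or from the direct observation that modulo $z_1z_2-y_{d/2}^2$ every binomial in the kernel reduces to one supported on the $y_j$'s), and it has the merit of exhibiting the initial ideal explicitly rather than funneling through the $M_{d;0,1,k}$ machinery.

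Case (ii), however, is not proved: you only propose a sorting-order strategy and then explicitly concede that the decisive step --- reducing the cubic binomials against the quadratic ``swap'' relations when the intrinsic relations of $R^{\overline{G'}}$ interfere --- is left unverified. That is exactly the content of the statement, so as written there is a genuine gap. There is also a concrete error in the setup: an element of $\cB_1$ does \emph{not} admit a unique factorization into an unordered $\delta$-tuple of degree-$d'$ invariants of $G'$ (already for trivial $G'$ the factorizations of a degree-$2d'$ monomial into two degree-$d'$ monomials are far from unique), so the proposed labelling of the variables of $S$ is not well defined without first fixing a canonical (e.g.\ sorted) factorization and proving it behaves well under the toric relations. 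The paper avoids all of this: after the same reduction ($R^{\bG}$ is the $\delta$-th Veronese subalgebra of $R^{\overline{G'}}$, with $\delta\ge 2$), it invokes $\reg(R^{\overline{G'}})\le 3$ together with \cite[Theorem 2]{ERT}, which asserts that the $c$-th Veronese of a standard graded algebra has a quadratic Gr\"obner basis once $c\ge\lceil \reg/2\rceil$. That theorem is precisely the tool that closes the obstacle you flagged; without it (or a completed sorting argument), your proof of (ii) is incomplete.
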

\begin{proof} (i) Since $\GCD(d,k) = 1$, we have $G = \langle M_{d;0,1,d-1} \rangle$. If $d = 2$, the result is trivial, so we take $d \geq 4$ even. We set $G' = \langle M_{d;0,1,2} \rangle$, a straightforward linear change of variable shows that $R^{G}$ and $R^{G'}$ are isomorphic. Now (i) follows from Proposition \ref{Proposition: G basis 0,1,k}. 

\noindent (ii) We set $d' = d/\delta, \,\alpha_1' = \alpha_1/\delta, \,\alpha_2' = \alpha_2/\delta$ and let $G' = \langle M_{d';0,\alpha_1',\alpha_2'} \rangle$. We have that the set of monomial invariants of $G$ of degree $d$ is the set of monomial invariants of $G'$ of degree $d = \delta d'$. Hence, $R^{\bG}$ is the $\delta$th Veronese subalgebra of $R^{\overline{G'}}$. Since $R^{\overline{G'}}$ has Castelnuovo-Mumford regularity $\reg(R^{\overline{G'}}) \leq 3$, by \cite[Theorem 2]{ERT} the ideal $\I(X^G_{2,d})$ has a quadratic Gr\"obner basis. 
\end{proof}

As in the proof of Proposition \ref{prop:G-quadratic_group} above, we can apply  \cite[Theorem 2]{ERT} to assure that, for any finite abelian group $G \subset \GL(n+1,\kk)$ of order $d$ and any integer $t \geq \lceil \frac{\reg(\A(X_{n,d}^G))}{2} \rceil$, the ring $R^{\bG^t}$ of invariants of the $t$-th cyclic extension of $G$ is a quadratic Gr\"obner algebra. 

\begin{problem} Given a finite abelian group $G \subset \GL(n+1,\kk)$ of order $d$, which is the smallest integer $t \geq 1$ such that $R^{\bG^t}$ is a quadratic Gr\"obner algebra? 
\end{problem}

In particular, for any finite abelian group $G \subset \GL(3,\kk)$, we have $t \leq 2$ and this bound is sharp by Proposition \ref{Proposition: degree binomial GT-surfaces}. 

The next lemma allows us to produce quadratic Gr\"obner algebras $\A(X_{n,d}^G)$ in any dimension $n \geq 2$. 
For some positive integers $\ell_1, \ldots, \ell_n$, we consider a polynomial ring $\widetilde{R} = \kk[x_{ij}]$ on $N=\ell_1+\ldots + \ell_n$ variables $x_{ij}$, where $1 \le i \le n$ and $1 \le j \le \ell_i$. We define a homomorphism $\psi: \widetilde{R} \longrightarrow R$ by $x_{ij} \mapsto x_i$.   

\begin{lemma}\label{lemma:GB_extension}
Let $\Omega_{n,d} \subseteq \cM_{n,d}$, and let $\Omega_{N,d} \subset \widetilde{R}$ be the preimage of $\Omega_{n,d}$ under $\psi$ as defined above. Take $Y_{n,d}$ and $Y_{N,d}$ to be the monomial projections parameterized by $\Omega_{n,d}$ and $\Omega_{N,d}$. If the ideal $\I(Y_{n,d})$ is generated in degrees at most $k$, so is $\I(Y_{N,d})$. Moreover, if $\I(Y_{n,d})$ has a Gröbner basis of binomials of degrees at most $k$, so does $\I(Y_{N,d})$.
\end{lemma}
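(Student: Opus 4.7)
The plan is to exploit the compatibility between the two ideals via $\psi$ and an induced ring homomorphism on the presentation side. Writing $\tilde\rho\colon \widetilde S\to \widetilde R$ for the presentation of $\A(Y_{N,d})$ (so $\tilde\rho(\tilde w_{\tilde m})=\tilde m$ for $\tilde m\in\Omega_{N,d}$), I define $\Phi\colon \widetilde S\to S$ by $\tilde w_{\tilde m}\mapsto w_{\psi(\tilde m)}$. A direct check gives $\rho\circ\Phi=\psi\circ\tilde\rho$, so $\Phi$ sends binomials in $\I(Y_{N,d})$ to binomials in $\I(Y_{n,d})$ of the same degree. The central technical ingredient is a \emph{lifting principle}: for any binomial $w^{\beta_+}-w^{\beta_-}\in \I(Y_{n,d})$ and any monomial $\tilde T\in \widetilde R$ with $\psi(\tilde T)=\rho(w^{\beta_+})=\rho(w^{\beta_-})$, there exist monomials $\tilde w^{\tilde\beta_\pm}\in \widetilde S$ satisfying $\Phi(\tilde w^{\tilde\beta_\pm})=w^{\beta_\pm}$ and $\tilde\rho(\tilde w^{\tilde\beta_\pm})=\tilde T$. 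This reduces, for each $i$, to a transportation problem in which one must distribute the $x_{ij}$-exponents of $\tilde T$ among the factors of $w^{\beta_\pm}$ compatibly with the fiber structure $F_m=\psi^{-1}(m)\cap \Omega_{N,d}$, and it admits a solution because total supply equals total demand at every variable.

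For the generation statement, I would induct on $t>k$ using Proposition~\ref{Proposition: Criterion generation}: it suffices to produce an $\I(Y_{N,d})_t$-sequence from $\tilde w^{\alpha_+}$ to $\tilde w^{\alpha_-}$ for each binomial $\tilde f = \tilde w^{\alpha_+}-\tilde w^{\alpha_-}\in \I(Y_{N,d})_t$. When $\Phi(\tilde w^{\alpha_+})\neq \Phi(\tilde w^{\alpha_-})$, the hypothesis on $\I(Y_{n,d})$ combined with Proposition~\ref{Proposition: Criterion generation} delivers an $\I(Y_{n,d})_t$-sequence in $S$ connecting $\Phi(\tilde w^{\alpha_+})$ and $\Phi(\tilde w^{\alpha_-})$, which I would lift step-by-step to an $\I(Y_{N,d})_t$-sequence in $\widetilde S$ via the lifting principle. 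This reduces matters to the case $\Phi(\tilde w^{\alpha_+})=\Phi(\tilde w^{\alpha_-})$, where both monomials have the same multiplicity on each fiber $F_m$; here the two monomials should be connected by quadratic \emph{polarization} binomials $\tilde w_{\tilde m_1}\tilde w_{\tilde m_2}-\tilde w_{\tilde m_1'}\tilde w_{\tilde m_2'}$ with $\tilde m_1\tilde m_2=\tilde m_1'\tilde m_2'$ in $\widetilde R$, by a combinatorial exchange argument on how their $\tilde\rho$-images decompose into fiber factors.

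For the Gröbner basis statement, I would fix a term order $\tilde\preceq$ on $\widetilde S$ refining the given $\preceq$ on $S$ via $\Phi$, with arbitrary tie-breaking inside each fiber. The proposed Gröbner basis of $\I(Y_{N,d})$ then consists of lifts of the Gröbner basis of $\I(Y_{n,d})$ (of degrees $\le k$) produced by the lifting principle, together with the polarization quadratics, and the two-case analysis above translates into a direct leading-term reduction by the division algorithm. The main obstacle throughout is the fiber-preserving case: one must show that polarization quadratics suffice to connect any two monomials in $\widetilde S$ sharing a common $\Phi$- and $\tilde\rho$-image. This is a combinatorial claim analogous to classical quadratic generation results for Segre--Veronese subrings, and I would handle it by an inductive exchange of monomial factors, first reducing discrepancies within a single fiber $F_m$ (a sub-Veronese relation) and then trading $x_{ij}$'s between fibers $F_{m_1},F_{m_2}$ with overlapping supports.
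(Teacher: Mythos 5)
Your map $\Phi$ and the dichotomy $\Phi(f)\neq 0$ versus $\Phi(f)=0$ are exactly the skeleton of the paper's proof, and the generation half of your plan (lift a generating set via the transportation argument, then repair discrepancies inside fibers by quadratic swaps of $x_{ij}$'s) is workable; the paper disposes of that half in one sentence. The genuine gap is in the Gr\"obner basis half: a term order refining $\preceq$ via $\Phi$ with \emph{arbitrary} tie-breaking inside the fibers does not work, and the claim you need there --- that the leading term of every binomial $f$ with $\Phi(f)=0$ is divisible by the leading term of some polarization quadric --- is false for bad tie-breaks. Concretely, take $n=1$, $d=3$, $\Omega_{1,3}=\{x_0^3\}$, and split $x_0$ into $x_{01},x_{02}$. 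Then $\I(Y_{1,3})=(0)$ has the empty set as a quadratic Gr\"obner basis for every order, while $\Omega_{N,3}=\{x_{01}^3,x_{01}^2x_{02},x_{01}x_{02}^2,x_{02}^3\}$ gives the twisted cubic: writing $u_b$ for the variable mapping to $x_{01}^bx_{02}^{3-b}$, the ideal is generated by $u_3u_1-u_2^2$, $u_3u_0-u_2u_1$, $u_2u_0-u_1^2$, and these are all ``polarization'' quadrics since all four variables lie in a single fiber. Because there is one fiber, refining via $\Phi$ imposes no condition, so your prescription allows any order refining the weights $\omega(u_3)=0$, $\omega(u_1)=1$, $\omega(u_0)=2$, $\omega(u_2)=10$. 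For such an order the three quadrics have leading terms $u_2^2,\,u_2u_1,\,u_2u_0$, while the cubic $u_0^2u_3-u_1^3\in\I(Y_{N,3})$ has leading term $u_0^2u_3$, divisible by none of them; moreover every nonzero quadric in the ideal has leading term divisible by $u_2$, so \emph{no} quadratic Gr\"obner basis exists for this order at all. Hence the crux of the lemma is precisely the construction of the tie-break, which your sketch leaves arbitrary, and ``quadrics suffice to connect'' (a generation statement) does not upgrade to the leading-term divisibility a Gr\"obner basis requires.

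The paper resolves this by reducing to the split of a single variable into two, $\widetilde R=\kk[x_{01},x_{02},x_1,\ldots,x_n]$, and iterating; it orders the variables of $\widetilde S$ first by their $\Phi$-images and, within a fiber, by the exponent $\alpha_{01}$, and it breaks ties between monomials with equal $\Phi$-image by RevLex with respect to that variable order. With this specific choice, in the case $\Phi(f)=0$ one compares the RevLex-smallest factors of the two terms of $f$: the smallest factor of the leading term must have strictly larger $x_{01}$-exponent than its counterpart, so some other factor of the leading term has positive $x_{02}$-exponent, and the explicit swap quadric built from those two factors is shown to have leading term dividing $\LT(f)$. That divisibility verification is the content your argument is missing (in the example above, the paper's recipe gives RevLex with $u_3\succ u_2\succ u_1\succ u_0$, which does yield a quadratic Gr\"obner basis for the twisted cubic). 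A smaller point: in your Case 1 you should note that, because the order compares $\Phi$-images first, a lift of a Gr\"obner basis element of $\I(Y_{n,d})$ chosen so that the lift of its leading term divides $\LT(f)$ automatically keeps that term as its leading term; this is true but should be said, and it is where the ``refine via $\Phi$'' part of the order is genuinely used.
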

\begin{proof}
It is enough to consider $\widetilde{R}=\kk[x_{01},x_{02},x_1, \ldots, x_n]$, as the general statement then follows by repeated use of the argument.

We will use the notation $x^\alpha$ and $x^{\widetilde \alpha}$ for monomials in $R$ and $\widetilde R$ with exponent vectors $\alpha$ and $\widetilde \alpha$. Let $S=\kk[w_\alpha]$ be the ambient ring of $\I(Y_{n,d})$, here on variables indexed by all $\alpha$ such that $x^\alpha \in \Omega$. Similarly, we define $\widetilde S = \kk[w_{\widetilde \alpha}]$. Let $\varphi:\widetilde S \longrightarrow S$ be the homomorphism defined by 
\[
\varphi(w_{(\alpha_{01}, \alpha_{02}, \alpha_1, \ldots, \alpha_n)}) = w_{(\alpha_{01}+ \alpha_{02}, \alpha_1, \ldots, \alpha_n)}.
\]
It is clear that the preimage of a generating set of $\I(Y_{n,d})$ under $\varphi$ is a generating set of $\I(Y_{N,d})$. 

Assume $\I(Y_{n,d})$ has a Gröbner basis of binomials of degree $\le k$ w.\,r.\,t.\ a term order $\succ$ on $S$. We extend $\succ$ to an order of the variables of $\widetilde S$ by declaring $w_{\widetilde \alpha} \succ w_{\widetilde \beta}$ if $\varphi(w_{\widetilde \alpha})\succ \varphi(w_{\widetilde \beta})$ or $\varphi(w_{\widetilde \alpha})=\varphi(w_{\widetilde \beta})$ and $\alpha_{01}>\beta_{01}$. Now for monomials $u,v$ in $\widetilde S$, we say that $u>v$ if $\varphi(u)\succ \varphi(v)$ or $\varphi(u)=\varphi(v)$ and $u\succ v$ according to RevLex on $\widetilde S$. It is easily verified that this is a proper term order on $\bar{S}$. 

To prove that $\I(Y_{N,d})$ has a degree $k$ Gröbner basis w.\,r.\,t.\ this order, take a degree $r$ binomial
\[f=w_{\widetilde\alpha^{(1)}} \cdots w_{\widetilde\alpha^{(r)}}-w_{\widetilde\beta^{(1)}} \cdots w_{\widetilde\beta^{(r)}} \in \I(Y_{N,d}),\]
 where $r>k$ and $w_{\widetilde\alpha^{(1)}} \cdots w_{\widetilde\alpha^{(r)}}$ is the leading term. We want to prove that this term is divisible by the leading term of some binomial of degree $\le k$ in $\I(Y_{N,d})$. We assume that the two terms in $f$ have no common factor. The proof is carried out in two cases. 

\noindent \underline{Case 1:} $\varphi(f) \ne 0$. Then, $\varphi(f)$ is a non-trivial binomial in $\I(Y_{n,d})$. It follows from how we defined the ordering that $\varphi(w_{\widetilde\alpha^{(1)}} \cdots w_{\widetilde\alpha^{(k)}})=w_{\alpha^{(1)}} \cdots w_{\alpha^{(k)}}$ is the leading term of $\varphi(f)$. By assumption, $w_{\alpha^{(1)}} \cdots w_{\alpha^{(k)}}$ is divisible by the leading term of some binomial of degree $s\le k$ in $I$. Say $w_{\alpha^{(1)}}\cdots w_{\alpha^{(s)}}-w_{\gamma^{(1)}}\cdots w_{\gamma^{(s)}} \in \I(Y_{n,d})$, with leading term $w_{\alpha^{(1)}}\cdots w_{\alpha^{(s)}}$. Then, we lift $w_{\gamma^{(1)}}\cdots w_{\gamma^{(s)}} \in S$ to $w_{\widetilde \gamma^{(1)}}\cdots w_{\widetilde \gamma^{(s)}} \in \widetilde S$ so that $w_{\widetilde\alpha^{(1)}} \cdots w_{\widetilde\alpha^{(s)}}-w_{\widetilde\gamma^{(1)}} \cdots w_{\widetilde\gamma^{(s)}} \in \I(Y_{N,d})$. 
The choice of $\gamma_{01}^{(1)}, \gamma_{02}^{(1)}, \ldots,  \gamma_{01}^{(s)}, \gamma_{02}^{(s)}$ is not unique, but 
as $w_{\alpha^{(1)}} \cdots w_{\alpha^{(s)}}\succ w_{\gamma^{(1)}}\cdots w_{\gamma^{(s)}}$, we are guaranteed that $w_{\widetilde\alpha^{(1)}}\cdots w_{\widetilde\alpha^{(s)}}\succ w_{\widetilde\gamma^{(1)}} \cdots w_{\widetilde\gamma^{(s)}}$.

\noindent \underline{Case 2.} $\varphi(f)=0$. In this case, we turn to the RevLex order on $\widetilde S$.
We can assume we have numbered the factors so that $w_{\widetilde\alpha^{(1)}}\succ \cdots \succ w_{\widetilde\alpha^{(r)}}$ and $w_{\widetilde\beta^{(1)}}\succ \cdots \succ w_{\widetilde\beta^{(r)}}$. 
As $\varphi$ respects the order and $\varphi(f)=0$,  we have $\varphi(w_{\widetilde\alpha^{(i)}})=\varphi(w_{\widetilde\beta^{(i)}}) $.
Since $w_{\widetilde\alpha^{(1)}} \cdots w_{\widetilde\alpha^{(r)}}$ is leading w.\,r.\,t.\ RevLex,  this tells us that $w_{\widetilde\alpha^{(r)}}\succ w_{\widetilde\beta^{(r)}}$. Then $\alpha_{01}^{(r)}>\beta_{01}^{(r)} \ge 0$. Thus, in some other factor $w_{\widetilde \alpha^{(i)}}$, we have $\alpha_{02}^{(i)}>0$. From this, we construct the relation
\[
w_{\widetilde \alpha^{(i)}}w_{\widetilde \alpha^{(r)}} - w_{(\alpha_{01}^{(i)}+1,\alpha_{02}^{(i)}-1,\alpha_1^{(i)}, \ldots, \alpha_n^{(i)})}w_{(\alpha_{01}^{(r)}-1,\alpha_{02}^{(r)}+1,\alpha_1^{(r)}, \ldots, \alpha_n^{(r)})} \in \I(Y_{N,d}).
\]
As
\[
w_{(\alpha_{01}^{(i)}+1,\alpha_{02}^{(i)}-1,\alpha_1^{(i)}, \ldots, \alpha_n^{(i)})} \succ w_{\widetilde \alpha^{(i)}}\succeq w_{\widetilde \alpha^{(r)}}\succ w_{(\alpha_{01}^{(r)}-1,\alpha_{02}^{(r)}+1,\alpha_1^{(r)}, \ldots, \alpha_n^{(r)})}, 
\]
the leading term is $w_{\widetilde \alpha^{(i)}}w_{\widetilde \alpha^{(r)}}$. 
\end{proof}

Proposition \ref{prop:GB_extend_groups} below illustrates how to apply Lemma \ref{lemma:GB_extension} to algebras $R^G$. To avoid heavy notation, we state the proposition for cyclic groups. The construction can then be applied to any abelian group.

\begin{proposition}\label{prop:GB_extend_groups}
Let $G=\langle M_{d;0,\alpha_1, \ldots, \alpha_n} \rangle \subset \GL(n+1,\kk)$, and let $\widetilde{M}$ be the diagonal block matrix with blocks $M_{d;0, \ldots, 0}, M_{d; \alpha_1 \ldots, \alpha_1}$, \ldots, $M_{d;\alpha_n, \ldots, \alpha_n}$ of any sizes. 
If $\A(X_{n,d}^G)$ is a quadratic Gr\"obner algebra, so is the algebra given by the group $\langle \widetilde{M}  \rangle $. 
\end{proposition}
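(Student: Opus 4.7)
The plan is to apply Lemma \ref{lemma:GB_extension} to the monomial set $\cB_1(G)$. Denote the block sizes of $\widetilde{M}$ by $\ell_0, \ell_1, \ldots, \ell_n$, and set $N+1 = \ell_0 + \ell_1 + \cdots + \ell_n$. Consider the polynomial ring $\widetilde{R} = \kk[x_{ij} \mid 0 \le i \le n, \ 1 \le j \le \ell_i]$ on which $\widetilde{G} = \langle \widetilde{M} \rangle$ acts diagonally, with $\widetilde{M}$ scaling $x_{ij}$ by $e^{\alpha_i}$ (where $\alpha_0 := 0$). Define $\psi: \widetilde{R} \to R$ by $x_{ij} \mapsto x_i$; this is the collapsing map from Lemma \ref{lemma:GB_extension}.

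The first step is to identify the set $\cB_1(\widetilde{G})$ of monomial invariants of $\widetilde{G}$ of degree $d$ with $\psi^{-1}(\cB_1(G))$. Given a monomial $x^{\widetilde\alpha} = \prod_{i,j} x_{ij}^{\widetilde\alpha_{ij}} \in \widetilde{R}$ of degree $d$, set $a_i = \sum_j \widetilde\alpha_{ij}$, so that $\psi(x^{\widetilde\alpha}) = x_0^{a_0} x_1^{a_1} \cdots x_n^{a_n}$ has degree $d$ in $R$. Since $\widetilde{M}(x^{\widetilde\alpha}) = e^{\sum_i \alpha_i a_i} x^{\widetilde\alpha}$, we have that $x^{\widetilde\alpha}$ is invariant under $\widetilde{G}$ if and only if $\sum_i \alpha_i a_i \equiv 0 \pmod{d}$, which is precisely the condition for $\psi(x^{\widetilde\alpha})$ to be invariant under $G$. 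Hence $\cB_1(\widetilde{G}) = \psi^{-1}(\cB_1(G))$, so $X_{N,d}^{\widetilde{G}}$ is the monomial projection parameterized by the preimage of $\cB_1(G)$ under $\psi$ in the sense of Lemma \ref{lemma:GB_extension}.

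By hypothesis, $\I(X_{n,d}^G)$ admits a Gröbner basis of binomials of degree two. Applying Lemma \ref{lemma:GB_extension} (iteratively, splitting one variable of $R$ at a time until the multiplicities $\ell_0, \ldots, \ell_n$ are achieved) yields a quadratic Gröbner basis for $\I(X_{N,d}^{\widetilde{G}})$, so $\A(X_{N,d}^{\widetilde{G}})$ is a quadratic Gröbner algebra. The only point requiring care is the first step, namely verifying that the invariants of the block matrix $\widetilde{M}$ are exactly the preimage of the invariants of $M_{d;0,\alpha_1,\ldots,\alpha_n}$; once this correspondence is in place, the conclusion is an immediate consequence of Lemma \ref{lemma:GB_extension}.
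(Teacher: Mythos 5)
Your proposal is correct and follows essentially the same route as the paper: identify the degree-$d$ monomial invariants of $\langle \widetilde{M} \rangle$ as the preimage under the collapsing map $\psi$ of the invariants of $G$, and then invoke Lemma \ref{lemma:GB_extension}. Your explicit check that $\widetilde{M}$ scales $x^{\widetilde\alpha}$ by $e^{\sum_i \alpha_i a_i}$, so that invariance depends only on $\psi(x^{\widetilde\alpha})$, is exactly the (implicit) justification the paper relies on.
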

\begin{proof}
We take $\ell_1, \ldots, \ell_n$ to be the sizes of the diagonal blocks of $\widetilde{M}$, and define a map $\psi$ as in Lemma \ref{lemma:GB_extension}. Then, the set of monomial invariants of $G$ is the image of the monomial invariants of $\langle \widetilde M \rangle$ under $\psi$. The result then follows from Lemma \ref{lemma:GB_extension}.
\end{proof}

Applying Proposition \ref{prop:GB_extend_groups} to the groups in Proposition \ref{prop:G-quadratic_group}, we get the following corollary.

\begin{corollary}\label{coro:GB_extend_groups}
Let $G=\langle M \rangle$ be a cyclic group of order $d$, where $M$ is a diagonal block matrix with three blocks $M_{d;0, \ldots, 0}, M_{d; k, \ldots, k}, M_{d;d-k, \ldots, d-k}$ of any sizes. If $d$ is even and $\GCD(d,k)=1$, then the algebra $\A(X_{n,d}^G)$ is G-quadratic. The same holds if $M$ is composed of three blocks  $M_{d;0, \ldots, 0}, M_{d; \alpha_1, \ldots, \alpha_1}, M_{d;\alpha_2, \ldots, \alpha_2}$ and $\GCD(d,\alpha_1,\alpha_2)>1.$
\end{corollary}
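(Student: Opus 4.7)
The plan is to derive this corollary as a direct application of Proposition \ref{prop:GB_extend_groups} to the two families of G-quadratic algebras produced in Proposition \ref{prop:G-quadratic_group}. There is essentially no new mathematical content to introduce: all the substantive work was done in those two source results, and I only need to recognize that the block matrices $M$ in the statement of the corollary are precisely the outputs of the extension construction applied to the two generators singled out for surfaces.

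More concretely, I would first fix the underlying ``surface'' group $G_0 \subset \GL(3,\kk)$ of order $d$. In the first case, $G_0 = \langle M_{d;0,k,d-k} \rangle$ with $d$ even and $\GCD(d,k)=1$; in the second, $G_0 = \langle M_{d;0,\alpha_1,\alpha_2} \rangle$ with $\GCD(d,\alpha_1,\alpha_2) > 1$. By Proposition \ref{prop:G-quadratic_group}, in both situations the defining ideal $\I(X_{2,d}^{G_0})$ admits a quadratic Gr\"obner basis, so $\A(X_{2,d}^{G_0})$ is G-quadratic.

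Second, I would apply Proposition \ref{prop:GB_extend_groups} taking $G_0$ as the input cyclic group with generator $M_{d;0,\alpha_1,\alpha_2}$ (where, in the first case, $\alpha_1 = k$ and $\alpha_2 = d-k$). The block matrix $\widetilde{M}$ produced there, with diagonal blocks $M_{d;0,\ldots,0}$, $M_{d;\alpha_1,\ldots,\alpha_1}$, $M_{d;\alpha_2,\ldots,\alpha_2}$ of arbitrary sizes $\ell_1,\ell_2,\ell_3$, is exactly the matrix $M$ in the corollary, and its total size is $n+1 = \ell_1 + \ell_2 + \ell_3$. Proposition \ref{prop:GB_extend_groups} then yields at once that $\A(X_{n,d}^G)$ is G-quadratic.

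I do not expect any real obstacle, since every step is a direct invocation of a result stated earlier. The only small points worth verifying explicitly are that the cyclic group $\langle M \rangle$ indeed has order $d$ (which follows from the fact that $M$ has the same spectrum of eigenvalues as the generator of $G_0$, hence the same order), and that the monomial invariants of $\langle M \rangle$ of degree $d$ are obtained from those of $G_0$ via the map $\psi$ in Lemma \ref{lemma:GB_extension}, so that the extension mechanism in Proposition \ref{prop:GB_extend_groups} genuinely applies.
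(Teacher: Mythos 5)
Your proposal is correct and coincides with the paper's own argument: the corollary is obtained exactly by feeding the G-quadratic surface algebras of Proposition \ref{prop:G-quadratic_group} into the block-extension mechanism of Proposition \ref{prop:GB_extend_groups}. The small verifications you mention (order of $\langle M\rangle$ and the correspondence of invariants under $\psi$) are already handled inside Proposition \ref{prop:GB_extend_groups}, so nothing further is needed.
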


Example \ref{ex:Koszul_group_action} explores an alternative approach to tackle the Koszulness of algebras  $\A(X_{n,d}^G)$.

\begin{example}\label{ex:Koszul_group_action}
Let $G = \langle M_{4;0,1,2,3} \rangle \subset \GL(4,\kk)$ be a cyclic group of order $4$. 
The algebra $R^{\overline{G}}$ is generated by 
the set of monomials $\cB_1$ given in Example \ref{ex:group_inv_alg}. 
The defining ideal $\I(X_{3,4}^G) \subset \kk[w_0, \ldots, w_{9}]$ has quadratic Gröbner basis under the RevLex term order with $w_0> \dots >w_{9}$, if the variables $w_i$ are mapped to the monomials in $\cB_1$ sorted according to RevLex with $x_1 > x_3 > x_0 > x_2$. An alternative way to prove that $R^{\overline{G}}$ is Koszul is the following. Let $S$ be the Segre product of two copies of the second Veronese subring on three variables. More concretely, we let
\[
S=\kk[y_0^2, y_1^2, y_2^2, y_0y_1, y_0y_2, y_1y_2] \circ \kk[z_0^2, z_1^2, z_2^2, z_0z_1, z_0z_2, z_1z_2] = \kk[y_iy_jz_kz_\ell]_{0 \le i,j,k,\ell \le 2}.
\]
This is a Koszul algebra as Segre products of Veronese subalgebras are Koszul, by \cite{Backelin-Froberg}. A surjective homomorphism $S \longrightarrow R^{\overline{G}}$ is induced by 
\[
y_0 \mapsto x_0^2, \quad y_1 \mapsto x_2^2, \quad y_2 \mapsto x_1x_3, \quad z_0 \mapsto x_0x_2, \quad z_1 \mapsto x_1^2, \quad z_2 \mapsto x_3^2.
\]
In fact $R^{\overline{G}} \cong S/ (y_0y_1 - z_0^2, y_2^2-z_1z_2)$, and $y_0y_1 - z_0^2, y_2^2-z_1z_2$ is a regular sequence of linear forms on the generators of $S$. This proves that $R^{\overline{G}}$ is Koszul, as the quotient of a Koszul algebra by a regular sequence of elements of degree one or two is again Koszul, by \cite{Barcanescu-Manolache}.
\end{example}

Unfortunately, we were unable to use the technique from Example \ref{ex:Koszul_group_action} on $\langle M_{6;0,1,2,3} \rangle$, which is also quadratic by Proposition \ref{Proposition: degree binomial GT-threefold}. Nevertheless, we are optimistic that there are other examples where similar ideas can be applied.

\section{Open problems and conjectures}
\label{Section: conjectures}
In this last section, we collect the open questions encountered in the preceding
sections, and state two conjectures.

\begin{enumerate}

\item Let $Y_{2,4}$ be the monomial projection  parameterized by $\cM_{2,4} \setminus \{x_0x_1x_2^2\}$. Is $\A(Y_{2,4})$ a Koszul algebra?.

\item Are the pinched Veronese algebras $\PV(n,d,s)$ with $d>s>1$ quadratic only if $s \ge \lceil \frac{n+2}{2} \rceil$, say with a few sporadic exceptions?.

\item Given a finite abelian group $G \subset \GL(n+1,\kk)$ of order $d$, can we determine if $\A(X_{n,d}^G)$ is a quadratic algebra?

\item  Let  $G \subset \GL(3,\kk)$ be a finite abelian group of order $d$. Is $\A(X_{2,d}^G)$ quadratic if and only if $\I(X_{2,d}^G)$ has a quadratic Gröbner basis?

\item Let $G=\langle M_{d;0,1,2,3} \rangle \subset \GL(4,\kk)$ be a cyclic group of order $d$. Assume that $d$ is even. Is $\A(X_{3,d}^G)$ quadratic if and only if $\I(X_{3,d}^G)$ has a quadratic Gröbner basis?
\end{enumerate}

We end the paper writing down two conjectures based on the results of the previous sections, on \cite[Corollary 5.7(2)]{CM-R}, as well as on Macaulay2 \cite{Macaulay2} computations.

\begin{conjecture}\label{Conj0}  Let $n\ge 3$ and take a cyclic group $G\subset \GL(n+1,\kk)$ of order $d$ generated by a diagonal matrix
\[
\left(\begin{array}{lllllll}
e^{\alpha_0} & 0 & \cdots & 0\\
0                & e^{\alpha_1} & \cdots & 0\\
\vdots & \vdots & \ddots & \vdots\\
0 & 0 & \cdots & e^{\alpha_n}\\
\end{array}\right)
\]
where $e$ is a $d$-th primitive root of $1\in \kk$. The algebra $\A(X_{n,d}^G)$ is quadratic if and only if for all $i<j<k$, the algebra $\A(X_{2,d}^{G^{i,j,k}})$ is quadratic, where $G^{i,j,k} \subset \GL(3,\kk)$ denotes the cyclic group generated by the diagonal matrix 
\[\left(\begin{array}{lllllll}
e^{\alpha_i} & 0 &  0\\
0                & e^{\alpha_j} &  0\\
0 & 0 & e^{\alpha_k}\\
\end{array}\right).
\]
\end{conjecture}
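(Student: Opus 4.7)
The conjecture is an ``if and only if'' statement, so I plan to address the two directions separately, as one is straightforward from the machinery already developed in the paper while the other is the actual substance.

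\textbf{The necessity direction} will follow by exactly the argument used in Proposition~\ref{Proposition: degree binomial d prime}. Suppose some $\A(X_{2,d}^{G^{i,j,k}})$ is not quadratic. Since $\I(X_{2,d}^{G^{i,j,k}})$ is generated in degrees at most three (by \cite[Theorem~3.2.6]{ThesisLiena}), Proposition~\ref{Proposition: Criterion generation} furnishes a non-trivial cubic binomial $w^{\alpha}\in \I(X_{2,d}^{G^{i,j,k}})$ admitting no $\I(X_{2,d}^{G^{i,j,k}})_3$-sequence. The monomial invariants of $G^{i,j,k}$ are precisely the monomial invariants of $G$ supported on $\{x_i,x_j,x_k\}$, so $w^{\alpha}$ sits inside $\I(X_{n,d}^G)$. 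Any hypothetical $\I(X_{n,d}^G)_3$-sequence realising $w^{\alpha}$ would consist of monomials of degree $3d$ in $\kk[x_i,x_j,x_k]$ (since trivial $3$-binomials preserve the total product $x_i^{3d_i}x_j^{3d_j}x_k^{3d_k}$), hence would live inside $\I(X_{2,d}^{G^{i,j,k}})_3$, a contradiction. Thus $\A(X_{n,d}^G)$ is not quadratic.

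\textbf{The sufficiency direction} is the heart of the conjecture. Assume every $\A(X_{2,d}^{G^{i,j,k}})$ is quadratic. Invoking \cite[Theorem~3.2.6]{ThesisLiena} and Proposition~\ref{Proposition: Criterion generation}, it suffices to exhibit an $\I(X_{n,d}^G)_3$-sequence for each non-trivial cubic binomial $w^{\alpha_+}-w^{\alpha_-}\in \I(X_{n,d}^G)$. Writing $m_{i_1}m_{i_2}m_{i_3}=m_{j_1}m_{j_2}m_{j_3}=x_0^{d_0}\cdots x_n^{d_n}$, let $r=|\{\ell : d_\ell>0\}|$. When $r\le 3$, all six monomials lie in $\cB_1(G^{a,b,c})$ for the triple $\{a,b,c\}$ of active indices, and the hypothesis immediately produces an $\I(X_{2,d}^{G^{a,b,c}})_3$-sequence that lifts to the required $\I(X_{n,d}^G)_3$-sequence. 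The proof then proceeds by induction on $r$: for $r\ge 4$, the goal is to find a quadratic $G$-invariant refactorisation that strictly reduces $r$, either on the $w^{\alpha_+}$ side or on the $w^{\alpha_-}$ side. The strategy is to fix an index $\ell$ with $d_\ell$ minimal and apply swap operations in the style of Remark~\ref{lem:operation}, of the form $x_\ell^{a}x_t^{b}\leftrightarrow x_\ell^{a+c}x_t^{b-c}$, choosing $c$ so that the exchange is balanced in the character lattice $\widehat{G}$; such a $c$ exists as soon as one of the three factors on a side contains $x_\ell$ together with at least one other variable appearing in the total product.

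\textbf{The main obstacle} is ensuring that at each step of the inductive reduction a genuinely $G$-invariant quadratic swap is available: $G$-invariance tightly restricts the legal $x_i/x_j$ exchanges, and an obstruction might occur when the only triples $\{i,j,k\}$ that would allow a swap are precisely those for which the hypothesis on $\A(X_{2,d}^{G^{i,j,k}})$ has to be used. The plan to overcome this is to combine two ingredients: first, a counting argument in $\widehat{G}\cong \ZZ/d\ZZ$ showing that the number of admissible swap characters grows with $r$, so that for $r\ge 4$ at least one pair of indices is always available; and second, when even this fails, to apply the surface hypothesis to an embedded triple to rewrite two of the factors into monomials of smaller support before attempting the swap. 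Verifying that these two moves suffice in all configurations — and in particular that no new $r$-dependent obstructions arise in dimensions $n\ge 4$ — is the delicate point, and is consistent with the Macaulay2 evidence cited by the authors.
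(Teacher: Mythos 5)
You should first be aware that the statement you are proving is Conjecture \ref{Conj0} of the paper: the authors do not prove it, and they state it precisely because the hard implication is open. What the paper does contain is the ``only if'' direction in essence (Proposition \ref{Proposition: degree binomial d prime}, via the surface characterization of Proposition \ref{Proposition: degree binomial GT-surfaces}), some special cases of the ``if'' direction (Proposition \ref{Proposition: degree binomial GT-threefold}, and the block-extension construction of Lemma \ref{lemma:GB_extension} and Proposition \ref{prop:GB_extend_groups}), and Macaulay2 evidence. Your necessity argument is correct and is essentially the paper's own: a cubic binomial of $\I(X_{2,d}^{G^{i,j,k}})$ with no $\I(X_{2,d}^{G^{i,j,k}})_3$-sequence lies in $\I(X_{n,d}^G)$, every monomial occurring in a hypothetical $\I(X_{n,d}^G)_3$-sequence for it has the same underlying product and hence support in $\{x_i,x_j,x_k\}$, and the degree-$d$ invariants of $G$ supported there coincide with those of $G^{i,j,k}$; so Proposition \ref{Proposition: Criterion generation} applies.

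The genuine gap is the sufficiency direction, which your text plans rather than proves. The reduction ``$r\le 3$ is immediate'' is fine, but the inductive step from $r\ge 4$ rests on two unverified assertions: (a) that a $G$-invariant quadratic swap reducing $r$ always exists, backed by a ``counting argument in $\widehat G\cong \ZZ/d\ZZ$'' that is never carried out, and (b) that when it does not, applying the surface hypothesis to an embedded triple restores the situation. Neither is routine. A balanced exchange $x_\ell^{a}x_t^{b}\leftrightarrow x_\ell^{a+c}x_t^{b-c}$ is $G$-invariant only when $c(\alpha_t-\alpha_\ell)\equiv 0 \pmod d$, so the smallest admissible $c$ is $d/\GCD(\alpha_t-\alpha_\ell,d)$, which may exceed the exponents actually available in the three factors; this is exactly the arithmetic obstruction that makes surfaces fail to be quadratic (equality in (\ref{eq:ineq_gcd})), and nothing in your sketch shows that for $r\ge 4$ such obstructions can always be bypassed, nor that the rewriting in (b) does not loop or increase $r$ elsewhere. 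Note also that, unlike the support-based family of Theorem \ref{thm:pv_quadratic}, here one cannot refactor a product of two invariants freely: $2$-normality fails for these $\cB_1$ (as the paper points out in Subsection \ref{subsec:monomials_group_inv}), so the analogue of the factorization (\ref{eq:2normal_decomp}) is unavailable and every intermediate monomial must be checked to be an invariant of degree $d$. Until (a) and (b) are established in all configurations for $n\ge 4$, what you have is a plausible strategy consistent with the computational evidence, not a proof; the conjecture remains open after your argument.
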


\begin{conjecture}\label{Conj2} Let $G \subset \GL(n+1,\kk)$ be a finite abelian group of order $d$. The algebra $\A(X_{n,d}^G)$ is quadratic if and only if $\I(X_{n,d}^G)$ has a quadratic Gröbner basis.
\end{conjecture}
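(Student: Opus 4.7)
The \emph{if} direction is immediate: a quadratic Gröbner basis generates the ideal in degree two, so $\A(X_{n,d}^G)$ is quadratic. For the converse, my plan proceeds in two reduction steps followed by an explicit construction. First, for a non-cyclic $G = \Gamma_1 \oplus \cdots \oplus \Gamma_s$, I would show that the low-support monomials in $\cB_1$ available by Proposition \ref{prop:Koszul_surf}, combined with Lemma \ref{lemma:GB_extension} and an iterated use of Proposition \ref{prop:GB_extend_groups}, reduce the problem to the cyclic case in a smaller number of variables. Second, for a cyclic group $G \subset \GL(n+1,\kk)$ with $n \geq 3$, my plan is to prove Conjecture \ref{Conj0} in parallel, so that quadraticness of $\A(X_{n,d}^G)$ localizes to quadraticness of each surface $\A(X_{2,d}^{G^{i,j,k}})$; then, granted the surface case, I would glue the surface Gröbner bases into a single quadratic Gröbner basis on $\kk[\Omega_{n,d}]$ via a block/product term order compatible with a fixed ordering of the $\binom{n+1}{3}$ coordinate projections.

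The heart of the proof is the cyclic surface case $G = \langle M_{d;0,\alpha_1,\alpha_2} \rangle$ satisfying the strict inequality (\ref{eq:ineq_gcd}). The monomials of $\cB_1$ biject with lattice points in a polygon cut out by the invariance congruence. Generalizing the $(r,c)$-parameterization from Proposition \ref{Proposition: G basis 0,1,k}, I would introduce coordinates on this polygon adapted to $(\alpha_1, \alpha_2, d)$, define a Lex order on $S$ by lexicographic comparison of those coordinates, and then replay the skeleton of that proof: classify non-trivial cubic binomials by the shape of their leading term, and for each configuration exhibit an explicit quadratic syzygy built by a local ``move'' in the polygon whose leading term divides the cubic's leading term. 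Proposition \ref{Proposition: Criterion generation} then delivers the reduction to degree two. The existence of enough quadratic syzygies at each step is precisely what the strict inequality (\ref{eq:ineq_gcd}) guarantees, via the combinatorial description underlying Proposition \ref{Proposition: degree binomial GT-surfaces}.

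The main obstacle, already flagged by the authors, is that the Lex order used in Proposition \ref{Proposition: G basis 0,1,k} fails once the divisibility relation $d = tk(k-1)$ is dropped, and the computational verification only extends to $d \le 25$. Different arithmetic regimes of $(d,\alpha_1,\alpha_2)$ produce qualitatively different polygons, so no single term order is likely to work uniformly; the construction will almost certainly branch on arithmetic invariants, and controlling the worst configurations (e.g.\ $\GCD(\alpha_1,d) = 1$ with $\lambda$ close to $\GCD(\alpha_1,d)$ in the sense of (\ref{eq:ineq_gcd})) is where I expect the combinatorics to be most delicate. To hedge against this, I would pursue in parallel a conceptual route generalizing Example \ref{ex:Koszul_group_action}: realize $R^{\bG}$, whenever it is quadratic, as the quotient of a Segre product of Veronese subrings by a regular sequence of linear and quadratic forms on the generators, so that G-quadraticness transfers from Backelin--Fröberg's theorem through the known preservation of Gröbner bases under regular linear/quadratic sections. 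Identifying the right Segre--Veronese cover for each quadratic $G$ is itself a non-trivial combinatorial task, but succeeding in it would yield a far more uniform proof than the case-by-case polygon analysis and, moreover, would suggest an approach to Conjecture \ref{Conj0} through the same covers extended to higher dimensions.
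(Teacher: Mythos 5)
The statement you are addressing is Conjecture~\ref{Conj2}: the paper does not prove it, and offers only partial evidence (Propositions~\ref{Proposition: G basis 0,1,k}, \ref{prop:G-quadratic_group}, \ref{prop:GB_extend_groups}, Corollary~\ref{coro:GB_extend_groups}, and Macaulay2 computations for $d\le 25$). So there is no proof in the paper to compare against; the question is whether your proposal actually closes the gap, and it does not. The only step you complete is the trivial ``if'' direction. The hard direction is left as a plan whose central ingredients are themselves open. Your reduction to surfaces runs through Conjecture~\ref{Conj0}, which is also unproven; and even granting it, quadraticity of every $\A(X_{2,d}^{G^{i,j,k}})$ gives no mechanism for producing a Gr\"obner basis of $\I(X_{n,d}^G)$: the variables of $S$ corresponding to invariant monomials of full support in $n+1$ variables do not come from any coordinate surface projection, so there is nothing to ``glue,'' and a block/product term order indexed by the $\binom{n+1}{3}$ triples has no candidate quadratic basis attached to it. Likewise, Lemma~\ref{lemma:GB_extension} only transports a Gr\"obner basis from fewer variables to more, and only for the very special block-shaped groups of Proposition~\ref{prop:GB_extend_groups}; it cannot reduce an arbitrary abelian (or even cyclic) $G\subset\GL(n+1,\kk)$ to the cyclic surface case, so your first reduction step fails as stated.

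The heart of your plan, the cyclic surface case with strict inequality~(\ref{eq:ineq_gcd}), is precisely the content of Question~\ref{Q2} and is exactly where the paper stops: the remark following Proposition~\ref{Proposition: G basis 0,1,k} already notes that the $(r,c)$-Lex order fails once $d=tk(k-1)$ is dropped, and the authors only have computational evidence that \emph{some} order works for each $d\le 25$. Saying you will ``introduce coordinates adapted to $(\alpha_1,\alpha_2,d)$'' and ``branch on arithmetic invariants'' names the difficulty without resolving it; no term order, no case list, and no quadratic-syzygy construction is supplied, and the claim that the strict inequality~(\ref{eq:ineq_gcd}) ``guarantees enough quadratic syzygies at each step'' is an assertion, since that inequality is only known to control degree-two \emph{generation} (Proposition~\ref{Proposition: degree binomial GT-surfaces}), not initial-ideal divisibility for a chosen order. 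Finally, the Segre--Veronese fallback rests on a single example (Example~\ref{ex:Koszul_group_action}) which the authors themselves could not extend even to $\langle M_{6;0,1,2,3}\rangle$; moreover, Koszulness descends along a regular sequence of forms of degree one and two by \cite{Barcanescu-Manolache}, but G-quadraticity of $R^{\bG}$ in its own toric presentation does not follow formally from G-quadraticity of the Segre--Veronese cover, so even that route would need a new transfer argument. In short, your proposal is a reasonable research outline consistent with the paper's partial results, but it is not a proof, and its two load-bearing steps (the general surface case and the passage from surfaces to $n\ge 3$) are missing.
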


\end{document}